\documentclass[reqno]{amsart}
\usepackage{amsmath,amsthm,amsfonts,amssymb,amscd,verbatim,multicol}
\usepackage[arrow,matrix,cmtip,curve]{xy}
\usepackage{lscape}
\usepackage{mathrsfs}  
\usepackage{fullpage}
\usepackage{stmaryrd}
\usepackage{xcolor}
\allowdisplaybreaks

\usepackage{xparse}
\usepackage{enumitem}

\usepackage[normalem]{ulem}

\numberwithin{equation}{section}

\theoremstyle{definition}
\newtheorem{theorem}{Theorem}[section]
\newtheorem{theorem*}{Theorem}

\newtheorem{corollary}[theorem]{Corollary} 
\newtheorem{definition}[theorem]{Definition} 
\newtheorem{example}[theorem]{Example}
\newtheorem{lemma}[theorem]{Lemma}
\newtheorem{proposition}[theorem]{Proposition}

\newtheorem{remark}[theorem]{Remark}

\DeclareMathOperator\ad{ad}

\DeclareMathOperator\Der{Der}

\DeclareMathOperator\End{End}

\DeclareMathOperator\ev{ev}
\DeclareMathOperator\gr{gr}

\DeclareMathOperator\id{id}

\DeclareMathOperator\op{op}

\renewcommand\int{\mathrm{int}}

\newcommand\tensor{\otimes}
\newcommand\stensor{\,\widehat\otimes\,}
\newcommand\kk{\Bbbk}

\newcommand\NN{\mathbb N}

\newcommand\ZZ{\mathbb Z}

\renewcommand\d{\mathrm{d}}

\newcommand\ep{\varepsilon}
\renewcommand\phi{\varphi}
\newcommand\del{\nabla}

\renewcommand\ev{\mathrm{ev}}

\begin{document}

\title{Universal Enveloping Algebras of Poisson Superalgebras}

\author[Lamkin]{Thomas Lamkin}
\address{UCSD, Department of Mathematics, 9500 Gilman Dr, La Jolla, CA 92093} 
\email{tlamkin@ucsd.edu}

\subjclass[2010]{17B63, 17B60, 17B35, 16T05}
\keywords{Poisson superalgebra, enveloping algebra, Lie-Rinehart superalgebra, PBW Theorem}
\begin{abstract}
In this paper, we define and study the universal enveloping algebra of a Poisson superalgebra. In particular, a new PBW Theorem for Lie-Rinehart superalgebras is proved leading to a PBW Theorem for Poisson superalgebras, we show the universal enveloping algebra of a Poisson Hopf superalgebra (resp. Poisson-Ore extension) is a Hopf superalgebra (resp. iterated Ore extension), and we study the universal enveloping algebra for interesting classes of Poisson superalgebras such as Poisson symplectic superalgebras.
\end{abstract}

\maketitle

\setcounter{section}{-1}
\section{Introduction}

The notion of a Poisson algebra arises naturally in the study of Hamiltonian mechanics and has since found use in numerous areas such as Poisson and symplectic geometry, as well as in the study of quantum groups. Due to the development of supersymmetry theories, one has also witnessed increased interest in ``super" objects such as Lie superalgebras, supermanifolds, and—most importantly for this paper—Poisson superalgebras. Naturally, one may be interested in studying the representation theory of algebraic super objects. In the non-super case, one method of studying the representation theory of Poisson algebras is to study its \emph{universal enveloping algebra}, a construction introduced in \cite{Oh_UEA} by Oh. Since then, universal enveloping algebras have been studied in a variety of contexts, such as for Poisson Hopf algebras \cite{LWZ_PoissonHopf, Oh_PoissonHopf} and Poisson-Ore extensions \cite{LWZ_PoissonOre}, and partial Poincaré–Birkhoff–Witt (PBW) theorems have been obtained \cite{LOV_PBW, LWZ_PoissonHopf, OPS_PBW}. We extend several such results to the case of Poisson superalgebras, and we prove the PBW Theorem holds for all Poisson (super)algebras over a field.

Our approach to prove the PBW Theorem uses the relation between Lie-Rinehart superalgebras and Poisson superalgebras. The former is a super generalization of Lie-Rinehart algebras which were first given comprehensive treatment by Rinehart in \cite{Rinehart_PBW}, and can be viewed as an algebraic analogue of the more geometric notion of Lie algebroids. In his paper, Rinehart defined the universal enveloping algebra of a Lie-Rinehart algebra $(A,L)$ and proved a PBW theorem for Lie-Rinehart algebras in the case $L$ is projective as an $A$-module (see \cite{Rinehart_PBW} and Section \ref{sec.LR_PBW} of this paper). Later in \cite{huebschmann_Poisson/LR}, it was shown that Poisson algebras can be viewed as Lie-Rinehart algebras in such a way that the enveloping algebra in the Poisson sense is isomorphic to the enveloping algebra in the Lie-Rinehart sense. The authors of \cite{LWZ_PoissonHopf} then used this perspective to prove a partial PBW theorem for Poisson algebras, with the aforementioned projectivity condition translating in the Poisson case to requiring the K\"{a}hler differentials $\Omega_R$ be projective over the Poisson algebra $R$. While the PBW Theorem was known to hold for other Poisson algebras, see e.g. \cite{LOV_PBW}, the general problem was still open. We show the PBW theorem does indeed hold for all Poisson (super)algebras over a field by proving the PBW theorem holds for Lie-Rinehart superalgebras $(A,L)$ over a commutative ring $S$ where $A$ and $L$ are free (super)modules over $S$.

This paper is organized as follows.

In Section \ref{sec.prelims}, we recall the necessary background in supermathematics. In Section \ref{sec.Poisson_superalgs}, we recall the definition of Poisson superalgebras and morphisms between them, and introduce some interesting examples of Poisson superalgebras. Next, in Section \ref{sec.UEA} we define and construct the universal enveloping algebra of a Poisson superalgebra, prove standard results such as the correspondence between Poisson modules over a Poisson superalgebra and modules over its universal enveloping algebra, and study the universal enveloping algebra of the Poisson superalgebras introduced in Section \ref{sec.Poisson_superalgs}. Section \ref{sec.LR_PBW} is dedicated to proving a new PBW Theorem for Lie-Rinehart superalgebras:
\begin{theorem*}[Theorem \ref{thm.LR_PBW}]
Suppose $A$ and $L$ are free supermodules over the base commutative ring $S$. Then the canonical $A$-superalgebra homomorphism $S_A(L)\rightarrow \gr(V(A,L))$ is an isomorphism.
\end{theorem*}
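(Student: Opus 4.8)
The canonical homomorphism $S_A(L)\to\gr(V(A,L))$ and its surjectivity come almost for free. Equip $V(A,L)$ with the filtration placing $A$ in degree $0$ and the image of $L$ in degree $\le 1$; then in $\gr(V(A,L))$ any two elements of $L$ super-commute, since their super-commutator is $[x,y]\in L$, which lives in strictly lower filtration degree, and $A$ behaves as a central coefficient ring at the graded level because the anchor relations $x a-(-1)^{|x||a|}ax=\rho(x)(a)$ drop degree. Hence $\gr(V(A,L))$ is a graded-commutative $A$-superalgebra generated in degree one by $L$, and the universal property of the super-symmetric algebra $S_A(L)$ yields the surjection. The whole theorem is therefore the injectivity of this map, i.e. the $S$-linear independence in $V(A,L)$ of the symbols of the standard ordered monomials $a\,x_{i_1}\cdots x_{i_k}$ with $i_1\le\cdots\le i_k$ and no odd $x_i$ repeated, where $\{x_i\}$ and $\{a_j\}$ are fixed ordered homogeneous $S$-bases of the free $S$-modules $L$ and $A$.

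To prove independence I would run the classical representation-theoretic PBW argument, adapted to the super and Lie-Rinehart setting. The plan is to put a genuine left $V(A,L)$-module structure on $M:=S_A(L)$ itself: let $A$ act by left multiplication as an $A$-superalgebra, and let each homogeneous $x\in L$ act by an operator $D_x$ defined inductively on monomial degree by straightening, namely $D_x$ inserts $x$ into a monomial and reorders it into standard form, picking up a Koszul sign at each transposition together with a correction term $D_{[x,x_i]}$ whenever $x$ is commuted past a larger generator $x_i$, and picking up the anchor term $\rho(x)(a)$ whenever $x$ is commuted past a coefficient $a\in A$. The point is that the two operator identities $D_x\,(a\cdot)-(-1)^{|x||a|}(a\cdot)\,D_x=\rho(x)(a)\cdot$ and $D_xD_y-(-1)^{|x||y|}D_yD_x=D_{[x,y]}$, together with $A$-linearity, are exactly the defining relations of $V(A,L)$, so their verification turns $M$ into a $V(A,L)$-module. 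Granting this, the orbit map $u\mapsto u\cdot 1$ sends the standard monomial $a\,x_{i_1}\cdots x_{i_k}$ to $a\,x_{i_1}\cdots x_{i_k}\in S_A(L)$ plus terms of strictly lower degree; since the leading terms form the $S$-basis of $S_A(L)$, the standard monomials are $S$-linearly independent, giving injectivity and hence the isomorphism.

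The decisive and only nontrivial step is the well-definedness of this action, i.e. checking that $\{D_x\}$ together with left multiplication by $A$ really satisfies the two relations above rather than merely being a consistent recipe on monomials. This is where the super-Jacobi identity for $L$ and the Leibniz compatibility $[x,ay]=a[x,y]+(-1)^{|x||a|}\rho(x)(a)\,y$ of the Lie-Rinehart structure are both needed: the Jacobi identity guarantees that the two ways of straightening $x(yz)$ versus $y(xz)$ agree, while the Leibniz rule is precisely what reconciles the bracket corrections with the anchor corrections when coefficients are moved past generators, all under careful Koszul sign bookkeeping. Freeness of $A$ and $L$ over $S$ enters exactly here, since it provides the totally ordered homogeneous basis that makes straightening, and therefore the inductive definition of $D_x$, unambiguous. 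I note that one could instead try to deduce the result from the ordinary super-PBW theorem for the semidirect-product Lie superalgebra $A\rtimes L$, which is free over $S$ and hence satisfies $\gr(U_S(A\rtimes L))\cong S_S(A\rtimes L)$; however, transferring this along the quotient presentation of $V(A,L)$ requires handling the filtration-degree shift that collapses $A$ from degree $1$ to degree $0$, so the direct representation on $S_A(L)$ is the cleaner route.
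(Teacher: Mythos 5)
There is a genuine gap, and it sits exactly where you place the weight of the argument. You reduce the theorem to the $S$-linear independence in $V(A,L)$ of the symbols of the standard monomials $a_j\,x_{i_1}\cdots x_{i_k}$, where $\{a_j\}$ and $\{x_i\}$ are homogeneous $S$-bases of $A$ and $L$, asserting that ``the leading terms form the $S$-basis of $S_A(L)$.'' That assertion is false: $L$ is free over $S$ but in general not over $A$, and $S_A(L)$ is a symmetric algebra \emph{over $A$}, so the standard monomials span it over $S$ but satisfy $A$-semilinear relations. Concretely, take $A=S[y]$ and $L=A$ with trivial bracket and zero anchor, with $S$-bases $a_j=y^j$ of $A$ and $x_i=y^i$ of $L$. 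In $S_A(L)$ one has $x_1=y\,x_0$, hence $a_0x_1=a_1x_0$, and correspondingly in $V(A,L)\cong S_A(L)$ the two distinct standard monomials have equal symbols. So the independence statement you set out to prove fails even though the theorem holds (here the canonical map is the identity). The actual content of the theorem is injectivity of the canonical map from $S_A(L)$, which already incorporates these relations, not freeness of $\gr(V(A,L))$ on standard monomials.

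The same issue undermines your construction of the module structure: since the standard monomials are not an $S$-basis of $S_A(L)$, an inductive straightening recipe on monomials does not determine operators $D_x$ at all; you would have to verify compatibility with every $A$-relation, equivalently the operator identity $D_{ax}=(a\cdot)D_x$, which reflects the relation $\iota_L(ax)=\iota_A(a)\iota_L(x)$ generating the ideal $P$ in the construction of $V(A,L)$ --- a third family of relations beyond the two identities you list. For $L$ projective over $A$ this can be carried out (essentially Rinehart's original argument), but freeness over $S$ alone gives no way to make the straightening unambiguous over $A$, and this unverified well-definedness \emph{is} the theorem. The paper proves the result by precisely the route you dismiss in your final sentence: it applies the super-PBW theorem to the Lie superalgebra $A\rtimes L$, which is free over $S$, to obtain a filtered $S$-linear isomorphism $\psi:S_S^+(A\rtimes L)\rightarrow U^+(A\rtimes L)$; it then shows $\psi$ carries the ideal $I=(a\tensor z-az)$ into $J=(a\tensor z-az)$ modulo lower filtration and conversely, whence $\gr(V(A,L))\cong\gr(V(A,L'))$ where $L'$ carries the trivial bracket and zero anchor; finally it identifies $V(A,L')\cong S_A(L)$ by the universal property. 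The filtration-degree shift you worry about (collapsing $A$ from degree $1$ to degree $0$) is exactly what this two-step comparison of the ideals $I$ and $J$ absorbs, and it is that comparison which closes the gap your proposal leaves open.
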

In Section \ref{sec.Poisson_PBW}, we follow \cite{LWZ_PoissonHopf} and use the results of Section \ref{sec.LR_PBW} to prove the PBW Theorem for Poisson superalgebras. In Section \ref{sec.Poisson_Hopf}, we study the universal enveloping algebra of Poisson Hopf superalgebras. The following result is a super generalization of \cite[Theorem 10]{Oh_PoissonHopf}.
\begin{theorem*}[Theorem \ref{thm.Hopf_UEA}]
If $(R,\eta,\del,\ep,\Delta,S)$ is a Poisson Hopf superalgebra, then the universal enveloping algebra $U(R)$ is a Hopf superalgebra.
\end{theorem*}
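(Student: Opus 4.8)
The plan is to install the Hopf superalgebra structure on $U(A)$ by transporting each structure map of $A$ through the universal property of the enveloping algebra, and then to verify the Hopf axioms using the uniqueness clause of that universal property. Throughout, write $\mathfrak{m}\colon A\to U(A)$ for the superalgebra embedding and $\mathfrak{h}\colon A\to U(A)$ for the Hamiltonian (Lie) embedding, so that $U(A)$ is generated by the elements $\mathfrak{m}(a)$ and $\mathfrak{h}(a)$ subject to the defining relations of the Poisson enveloping algebra (multiplicativity of $\mathfrak{m}$, the super-Lie relation for $\mathfrak{h}$, the Hamiltonian-of-a-product Leibniz relation, and the mixed relation $[\mathfrak{h}(a),\mathfrak{m}(b)]=\mathfrak{m}(\{a,b\})$).

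First I would construct the comultiplication. Since $U(A)\otimes U(A)$ is an associative superalgebra, the universal property reduces the problem to producing a superalgebra map and a compatible Hamiltonian map out of $A$ into $U(A)\otimes U(A)$ satisfying the defining relations. Writing $\Delta(a)=\sum a_{(1)}\otimes a_{(2)}$ for the Poisson comultiplication, the natural candidates are
\[
\Delta_U(\mathfrak{m}(a))=\sum \mathfrak{m}(a_{(1)})\otimes\mathfrak{m}(a_{(2)}),\qquad
\Delta_U(\mathfrak{h}(a))=\sum\bigl(\mathfrak{h}(a_{(1)})\otimes\mathfrak{m}(a_{(2)})+(-1)^{|a_{(1)}|}\,\mathfrak{m}(a_{(1)})\otimes\mathfrak{h}(a_{(2)})\bigr),
\]
the second formula being forced by the super-Leibniz rule once one demands that $\Delta_U$ be induced by the Poisson map $\Delta$. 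One then checks that these assignments respect every defining relation of $U(A)$; the decisive inputs are that $\Delta$ is both an algebra homomorphism and a Poisson map, i.e. $\Delta(\{a,b\})=\{\Delta(a),\Delta(b)\}$ in the tensor-product Poisson superalgebra $A\otimes A$.

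Next I would build the counit and antipode in the same fashion. For the counit, $\kk$ carries the trivial Poisson bracket, so I set $\ep_U(\mathfrak{m}(a))=\ep(a)$ and $\ep_U(\mathfrak{h}(a))=0$; the relations then collapse to the statements that $\ep$ is a superalgebra homomorphism and that $\ep(\{a,b\})=0$, both of which hold since $\ep$ is Poisson into a trivially-bracketed target. For the antipode I would map into the opposite superalgebra $U(A)^{\mathrm{op}}$: because $S\colon A\to A$ is a Poisson map reversing multiplication, I would set $\mathfrak{m}(a)\mapsto \mathfrak{m}(S(a))$ and $\mathfrak{h}(a)\mapsto -\mathfrak{h}(S(a))$ (up to the appropriate Koszul sign), producing a superalgebra anti-endomorphism $S_U$ of $U(A)$.

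Finally I would verify the bialgebra and antipode axioms, and here the governing principle is uniqueness: coassociativity $(\Delta_U\otimes\id)\Delta_U=(\id\otimes\Delta_U)\Delta_U$, the counit identities, and the antipode identity (that $S_U$ is a convolution inverse of $\id$) are all equalities between superalgebra (anti)homomorphisms out of $U(A)$, so it suffices to check them on the generators $\mathfrak{m}(a),\mathfrak{h}(a)$, where they reduce to the corresponding Hopf identities already known to hold in $A$. The main obstacle I anticipate is the Koszul sign bookkeeping: pinning down the correct signs in the super-Leibniz formula for $\Delta_U(\mathfrak{h}(a))$ and in $S_U$, and confirming that with those signs the mixed relation $[\mathfrak{h}(a),\mathfrak{m}(b)]=\mathfrak{m}(\{a,b\})$ and the Hamiltonian-of-a-product relation are genuinely preserved. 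Once the signs are fixed consistently, every remaining step is a routine invocation of the universal property.
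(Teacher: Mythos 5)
Your overall strategy is the same as the paper's: transport $\Delta,\ep,S$ through the universal property of $U(A)$ and verify the Hopf axioms on the generators $m(a),h(a)$ via the uniqueness clause (the paper packages this as Proposition \ref{prop.tensor_UEA}, identifying $(U(A)\stensor U(A),m\stensor m,m\stensor h+h\stensor m)$ as the enveloping algebra of $A\stensor A$, Proposition \ref{prop.Opposite_UEA} for $U(A)^{\op}$, and the functoriality Lemma \ref{lemma.U_functor}). But there is a genuine gap at the counit and antipode: you assume that $\ep$ and $S$ are compatible with the bracket, writing that the needed identities ``hold since $\ep$ is Poisson into a trivially-bracketed target.'' In the definition of a Poisson Hopf superalgebra only $\Delta$ is required to be a super Poisson homomorphism; that $\ep(\{a,b\})=0$ and that $S$ is a super Poisson \emph{anti}-automorphism are conclusions, not hypotheses, and they require a real Sweedler-calculus argument (the paper's Lemma \ref{lemma.Poisson(anti)hom}, e.g.\ the trick showing $\ep(\{a,b\})=2\ep(\{a,b\})$, and a multi-step manipulation for $S$). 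Your justification is circular as written. Relatedly, $S$ does \emph{not} reverse multiplication: since $A$ is supercommutative, $S$ is an algebra homomorphism; what it reverses is the bracket, and that is precisely the fact that must be proved and that motivates mapping into $U(A)^{\op}$.

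Your two signs are also wrong, and not in a way that later bookkeeping repairs. First, since $m$ and $h$ are \emph{even} maps, $(m\stensor h)(a_{(1)}\stensor a_{(2)})=m(a_{(1)})\stensor h(a_{(2)})$ with no Koszul sign, so the correct comultiplication is $\Delta_U(h(a))=\sum m(a_{(1)})\stensor h(a_{(2)})+h(a_{(1)})\stensor m(a_{(2)})$; with your extra factor $(-1)^{|a_{(1)}|}$ the defining relations of $U(A)$ (already the Leibniz relation for $h(ab)$) are not preserved, and the sign-sensitive verification that the unsigned formula is a Lie superalgebra map is a substantial computation in the paper (Lemma \ref{lemma.tensor_of_maps}). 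Second, the antipode is $h(a)\mapsto h(S(a))$, not $-h(S(a))$: the minus sign coming from the negated bracket of $A^{\op}$ (via the identification of $(U(A),m,-h)$ with $U(A^{\op})$) is cancelled by the isomorphism $U(A)\cong U(A)^{\op}$ sending $m_x\mapsto m_x$, $h_x\mapsto -h_x$ (Proposition \ref{prop.Opposite_UEA}), leaving $U(S)h=hS$. A concrete test: in the Poisson supersymmetric superalgebra $\PS(L)$ one has $h_1=0$, so $\Delta_U(h_a)=h_a\stensor 1+1\stensor h_a$ for $a\in L$, and the antipode axiom forces $S_U(h_a)=-h_a=h_{S(a)}$, whereas your formula yields $+h_a$. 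With the compatibility lemma supplied and these two signs corrected, the remainder of your plan coincides with the paper's proof.
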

Finally, in Section \ref{sec.Poisson_Ore}, we define Poisson-Ore extensions for Poisson superalgebras by an even indeterminate and show their universal enveloping algebras are iterated Ore extensions, generalizing \cite[Theorem 0.1]{LWZ_PoissonOre}.

\subsection*{Acknowledgements}
The author would like to thank Miami University and the USS program for funding this project. The author would also like to thank Jason Gaddis for suggesting the project idea, as well as for many helpful discussions and advice throughout.

\section{Background on Supermathematics}\label{sec.prelims}
Throughout this paper we work over a field $\kk$ of characteristic 0, except in Section \ref{sec.LR_PBW} where we work over a commutative ring $S$ of characteristic 0. All (super)algebras, (super) tensor products, etc., are assumed to be taken over the base field (or ring) unless specified otherwise. We recall the basic concepts of supermathematics, referring the reader to \cite{Manin_Supermath} for further details.

\begin{definition}\label{defn.superring}
A \emph{superring} is a $\ZZ_2$-graded ring $R=R_0\oplus R_1$ such that $R_iR_j\subseteq R_{i+j}$, where the indices are taken mod 2. Elements of $R_0$ are called \emph{even}, elements of $R_1$ are called \emph{odd}, and an element which is either even or odd is called \emph{homogeneous}. We denote the \emph{parity} or \emph{degree} of a homogeneous element $r$ by $|r|$.

A superring is \emph{supercommutative} if
\begin{equation*}
rs=(-1)^{|r||s|}sr
\end{equation*}
for all homogeneous $r,s\in R$. 
\end{definition}

In the sequel, we implicitly assume indices are taken mod 2 in relations such as $R_iR_j\subseteq R_{i+j}$. Note that any ring $R$ can be made into a superring via the trivial grading $R=R\oplus 0$.

\begin{definition}\label{defn.supermodule}
A \emph{(left) supermodule} over a superring $R=R_0\oplus R_1$ is a $\ZZ_2$-graded (left) module $M=M_0\oplus M_1$ such that $R_iM_j\subseteq M_{i+j}$. Right supermodules are defined similarly.
\end{definition}

For supermodules, and superobjects in general, even, odd, and homogeneous elements are defined as for superrings and $|\cdot|$ will be used to denote parity. Throughout this paper, we will adopt the convention that elements appearing in any formula or expression are homogeneous unless stated otherwise.

Note that if a superring $R$ is supercommutative, then any left $R$-supermodule $M$ can naturally be considered a right $R$-supermodule via the action $m*r=(-1)^{|r||m|}r\cdot m$. Thus, we shall assume all supermodules are left supermodules unless stated otherwise. We will say a supermodule $M=M_0\oplus M_1$ is a \emph{free} supermodule if $M$ has a basis consisting of homogeneous elements. For example, a super vector space is always a free supermodule over its base field.

\begin{definition}\label{defn.supermodule_hom}
A \emph{homomorphism} from an $R$-supermodule $M$ to an $R$-supermodule $N$ is an $R$-linear map $\phi:M\rightarrow N$. If $\phi$ preserves the grading, we say $\phi$ is an \emph{even} homomorphism whereas if $\phi$ reserves the grading, we say $\phi$ is an \emph{odd} homomorphism. A homomorphism is \textit{homogeneous} if it is either even or odd.
\end{definition}

\begin{definition}\label{defn.superalg}
A \emph{superalgebra} over a supercommutative superring $R=R_0\oplus R_1$ is an $R$-supermodule $A=A_0\oplus A_1$ together with an $R$-bilinear multiplication $A\times A\rightarrow A$ that respects the grading, is associative, and admits a unit. That is,
\begin{equation*}
    r\cdot(ab)=(r\cdot a)b=(-1)^{|r||a|}a(r\cdot b)
\end{equation*}
for all $r\in R$ and $a,b\in A$, and $A_iA_j\subseteq A_{i+j}$. A \textit{homomorphism of superalgebras} is a homomorphism of algebras that is also a homomorphism of supermodules. A superalgebra is \emph{supercommutative} if it is supercommutative as a superring.
\end{definition}

Another way to interpret the supercommutativity condition for superalgebras is as follows. Given a superalgebra $A$, define the \emph{opposite} superalgebra to be the superalgebra $A^{\op}$ whose underlying supermodule is the same as $A$, but with multiplication defined by
\begin{equation*}
    a\circ b:=(-1)^{|a||b|}ba.
\end{equation*}
Then $A$ is supercommutative if and only if $A=A^{\op}$.

\begin{definition}\label{defn.superder}
Let $A,B$ be superalgebras and let $M$ be a $B$-supermodule. If $\sigma:A\rightarrow B$ is a homogeneous superalgebra homomorphism and $d:A\rightarrow M$ is a homogeneous $A$-linear map, then we say $d$ is a \textit{homogeneous $\sigma$-superderivation} (of degree $|d|$) if
\begin{equation*}
    d(ab)=(-1)^{|a||d|}\sigma(a)d(b)+(-1)^{|\sigma|(|d|+|a|)}d(a)\circ\sigma(b)=(-1)^{|a||d|}\sigma(a)d(b)+(-1)^{|b|(|d|+|a|)}\sigma(b)d(a)
\end{equation*}
for all homogeneous $a,b\in A$. A \textit{$\sigma$-superderivation} is the sum of an even $\sigma$-superderivation and an odd $\sigma$-superderivation; a \textit{superderivation} is an $\id_A$-superderivation.
\end{definition}

\begin{definition}\label{defn.super_tensor_prod}
If $M$ is a right supermodule and $N$ is a left supermodule, then the \emph{super tensor product} of $M$ and $N$, denoted $M\stensor N$, is the super abelian group with grading
\begin{align*}
    &(M\stensor N)_0=(M_0\tensor N_0)\oplus(M_1\tensor N_1),\\
    &(M\stensor N)_1=(M_0\tensor N_1)\oplus(M_1\tensor N_0).
\end{align*}
\end{definition}

Note that if $A,B$ are superalgebras over a supercommutative superring $R$, then $A\stensor B$ is a superalgebra over $R$ via the multiplication defined on homogeneous elements by
\begin{equation*}
(a\stensor b)(c\stensor d)=(-1)^{|b||c|}(ac\stensor bd).
\end{equation*}

\begin{definition}
Let $M$ be a module over a supercommutative superring $R$. Then the \emph{tensor superalgebra} of $M$ is
\begin{equation*}
    T_R(M):=\bigoplus_{k\leq 0}M^{\stensor k}.
\end{equation*}
The \emph{supersymmetric superalgebra} of $M$ is the quotient
\begin{equation*}
    S_R(M):=T_R(M)/(x\stensor y-(-1)^{|x||y|}y\stensor x).
\end{equation*}
If $R$ is the base field $\kk$ (or base ring $S$ in Section \ref{sec.LR_PBW}), then we will denote the tensor superalgebra and supersymmetric superalgebra as simply $T(M),S(M)$, respectively.
\end{definition}

The tensor superalgebra and supersymmetric superalgebras both have universal properties analogous to those of their non-super counterparts; both can be found in \cite[Chapter 3, \S 2.5]{Manin_Supermath}.

\begin{definition}
Given superalgebra homomorphisms $f:A\rightarrow A'$ and $g:B\rightarrow B'$, the \textit{super tensor product of f and g} is the superalgebra homomorphism
\begin{align*}
    f\stensor g:\hspace{6pt}&A\stensor B\rightarrow A'\stensor B'\\
    a\stensor b&\mapsto (-1)^{|a||g|}f(a)\stensor g(b).
\end{align*}
\end{definition}

\section{Poisson Superalgebras}\label{sec.Poisson_superalgs}
In this section we will recall the definition of a Poisson superalgebra, and provide useful definitions and examples that will be used in subsequent sections.

\begin{definition}\label{defn.Lie_superalgebra}
A super vector space $L=L_0\oplus L_1$ together with a bilinear bracket $\{\cdot,\cdot\}:L\times L\rightarrow L$ is a \textit{Lie superalgebra} if
$\{L_i,L_j\}\subseteq L_{i+j}$, and for homogeneous $x,y\in L$,
\begin{enumerate}
    \item $\{x,y\}=-(-1)^{|x||y|}\{y,x\}$ (Super skew-symmetry),
    \item $\{x,\{y,z\}\}+(-1)^{|x|(|y|+|z|)}\{y,\{z,x\}\}+(-1)^{|z|(|x|+|y|)}\{z,\{x,y\}\}=0$ (Super Jacobi identity).
\end{enumerate}
In particular, if $A$ is an associative superalgebra then the pair $(A,[\cdot,\cdot]_{\gr})$, where $[\cdot,\cdot]_{\gr}$ is the supercommutator bracket, is a Lie superalgebra which we denote by $A_L$.
\end{definition}

\begin{definition}\label{defn.Poisson_superalgebra}
A \emph{Poisson superalgebra} is a supercommutative superalgebra $R$ with a bracket $\{\cdot,\cdot\}$ such that $(R,\{\cdot,\cdot\})$ is a Lie superalgebra and such that $\{\cdot,\cdot\}$ satisfies the Leibniz rule:
\begin{equation*}
\{x,yz\}=(-1)^{|x||y|}y\{x,z\}+\{x,y\}z
\end{equation*}
for all homogeneous $x,y,z\in R$. That is, $\{x,\cdot\}$ is a superderivation of degree $|x|$.
\end{definition}

\begin{example}
If $A$ is a supercommutative superalgebra, then $A_L$ is a Poisson superalgebra; note the bracket in this case is trivial. Moreover, any Poisson algebra $R$ can be considered a Poisson superalgebra via the trivial grading $R=R\oplus 0$.
\end{example}

To form the category of Poisson superalgebras, which we denote \textbf{Poiss}, we define morphisms as follows.

\begin{definition}\label{defn.Poisson_hom}
For Poisson superalgebras $R,P$, an even superalgebra homomorphism $\phi:R\rightarrow P$ is a \emph{super Poisson homomorphism} if
\begin{equation*}
\phi(\{x,y\})=\{\phi(x),\phi(y)\}
\end{equation*}
for all $x,y\in R$.
\end{definition}

\begin{remark}\label{rem.Poisson_antihom}
Just as we can define super Poisson homomorphisms between Poisson superalgebras $R,P$, we can define super Poisson \emph{anti-homomorphisms} as even superalgebra homomorphisms $\phi:R\rightarrow P$ satisfying
\begin{equation*}
    \phi(\{x,y\})=-\{\phi(x),\phi(y)\}=(-1)^{|x||y|}\{\phi(y),\phi(x)\}
\end{equation*}
for $x,y\in R$. This notion will be important when we discuss Poisson Hopf superalgebras in Section \ref{sec.Poisson_Hopf}.
\end{remark}

\begin{remark}
Though the requirement that super Poisson (anti)-homomorphisms be even maps may seem unnecessary at the moment, we need this condition to ensure the veracity of Lemma \ref{lemma.prop_P_composition} and Lemma \ref{lemma.U_functor}.
\end{remark}

We can also enrich the notion of superderivations in the Poisson setting; we will use this in Section \ref{sec.Poisson_Ore} when studying Poisson-Ore extensions.

\begin{definition}\label{defn.Poisson_superder}
Let $R$ be a Poisson superalgebra, let $\sigma:R\rightarrow R$ be a super Poisson homomorphism, and let $\alpha:R\rightarrow R$ be a homogeneous superderivation of $R$. Then $\alpha$ is a \emph{Poisson $\sigma$-superderivation} (of degree $|\alpha|$) if
\begin{equation*}
    \alpha(\{x,y\})=\{\alpha(x),\sigma(y)\}+(-1)^{|x|(|\sigma|+|\alpha|)+|\sigma||\alpha|}\{\sigma(x),\alpha(y)\}
\end{equation*}
for $x,y\in R$. A \emph{Poisson $\sigma$-superderivation} is the sum of an even Poisson $\sigma$-superderivation and an odd Poisson $\sigma$-superderivation; a \emph{Poisson superderivation} is a Poisson $\id_R$-superderivation.
\end{definition}

\begin{remark}
We will not need the full generality of the above definition in this paper. Rather, we only need the notion of a Poisson superderivation (i.e. the $\sigma=\id_R$ case).
\end{remark}

The remainder of this section will be devoted to introducing two interesting classes of Poisson superalgebras.

\subsection{Poisson Symplectic Superalgebras}\label{sec.Symplectic_superalgs}
(C.f. \cite[Example 2.2]{LOV_PBW}) Let $L$ be a Lie superalgebra with bracket $[\cdot, \cdot]$, and let $\sigma$ be a 2-cocycle in the trivial Lie superalgebra cohomology of $L$. That is, $\sigma:L\times L\rightarrow\kk$ is a bilinear form satisfying
\begin{enumerate}
    \item Super skew-symmetry: $\sigma(x,y)=-(-1)^{|x||y|}\sigma(y,x)$ for $x,y\in L$.
    \item Super Jacobi identity: $\sigma([x,y],z)+(-1)^{|z|(|x|+|y|)}\sigma([z,x],y)+(-1)^{|x|(|y|+|z|)}\sigma([y,z],x)=0$ for $x,y,z\in L$.
    \item Grading-preserving: $\sigma(x,y)=0$ for $|x|+|y|=1$.
\end{enumerate}
Define a bracket on the supersymmetric superalgebra $S(L)$ by
\begin{equation*}
    \{x,y\}_\sigma:=[x,y]+\sigma(x,y)
\end{equation*}
for $x,y\in L$, extending to $S(L)$ as a bisuperderivation; the case $\sigma=0$ has also been studied for the non-super case in, e.g., \cite[Example 11]{Oh_PoissonHopf} and \cite[Proposition 6.3]{LWZ_PoissonHopf}. One can verify this makes $S(L)$ into a Poisson superalgebra which we denote by $PS_\sigma(L)$, or simply $PS(L)$ if $\sigma=0$ is the trivial cocycle. We call these Poisson superalgebras \emph{Poisson symplectic superalgebras}.

In particular, let $L_n$ be an abelian Lie superalgebra with a basis $\{x_i,y_j\}_{1\leq i,j\leq n}$ consisting of odd elements. Then the bilinear map $\sigma:L_n\times L_n\rightarrow\kk$ defined by
\begin{equation*}
\sigma(x_i,y_j):=\sigma(y_i,x_j):=\delta_{ij}, \quad \sigma(x_i,x_j):=\sigma(y_i,y_j):=0
\end{equation*}
is a 2-cocycle. We call $PS_\sigma(L_n)$ the \emph{$n$-th Poisson symplectic superalgebra} and denote it $P_n$; these algebras are the super analogue of the $n$-th Poisson symplectic algebras $S_n$. It was shown in \cite{Umirbaev_Enveloping_Algebras} that the universal enveloping algebra $U(S_n)$ is isomorphic to the $2n$-th Weyl algebra $A_{2n}$. We will compute the universal enveloping algebra of $PS_\sigma(L)$ in the next section and show that in particular, $U(P_n)$ is a Weyl superalgebra.

\subsection{Quadratic Poisson Brackets on Exterior Algebras}
Let $P=\kk[x_1,\ldots,x_n]$ be a polynomial algebra with $\deg(x_i)=1$ for $1\leq i\leq n$. Following \cite{Sklyanin_PoissonPoly}, we say that $P$ is a \emph{quadratic Poisson polynomial algebra} if $P$ is given a Poisson bracket $\{\cdot,\cdot\}$ such that for each $x_i,x_j$, the bracket satisfies
\begin{equation*}
    \{x_i,x_j\}=\sum_{r,\ell}C_{r,l}^{i,j}x_rx_\ell
\end{equation*}
for some scalars $C_{r,\ell}^{i,j}$ satisfying $C_{r,\ell}^{i,j}=C_{\ell,r}^{i,j}=-C_{\ell,r}^{j,i}=-C_{r,\ell}^{j,i}$. We give the exterior algebra $P^!=\Lambda(\theta_1,\ldots,\theta_n)$ the $\ZZ_2$-grading defined by $|\theta_i|=1$ for $1\leq i\leq n$ and define the \emph{dual bracket} on $P^!$ by
\begin{equation*}
    \{\theta_r,\theta_\ell\}:=\sum_{i,j}C_{r,\ell}^{i,j}\theta_j\theta_i.
\end{equation*}
This bracket gives $P^!$ the structure of a Poisson superalgebra. Due to the form of the bracket, we will call $P^!$ (and $P$) a \emph{quadratic Poisson polynomial superalgebra}. Note that if $P$ is given the trivial bracket $\{x_i,x_j\}=0$, then the dual bracket on $P^!$ is also trivial. We remark that our convention is different than \cite{CFF_DualBracket}. In particular, the brackets are negatives of one another.

\begin{example}\label{eg.skew_symm} 
Let $(\lambda_{ij})$ be an $n\times n$ skew-symmetric matrix. Then $P=\kk[x_1,\ldots,x_n]$ can be made into a Poisson algebra via the bracket $\{x_i,x_i\}=\lambda_{ij}x_ix_j$ (see e.g. \cite[Example 2.2]{Oh_PoissonOre}). This is a quadratic Poisson polynomial algebra with $C_{i,j}^{i,j}=\lambda_{ij}$ and $C_{r,\ell}^{i,j}=0$ for $\{r,\ell\}\neq \{i,j\}$. In this case, the dual bracket on $P^!=\Lambda(\theta_1,\ldots,\theta_n)$ is given by $\{\theta_r,\theta_\ell\}=\lambda_{r\ell}\theta_\ell\theta_r$.
\end{example}

\section{Universal Enveloping Algebras}\label{sec.UEA}
In this section we will define and construct the universal enveloping algebra of a Poisson superalgebra, prove important results about universal enveloping algebras including the correspondence between modules over a Poisson superalgebra and modules over its universal enveloping algebra, and compute the universal enveloping algebra of Poisson symplectic superalgebras and quadratic Poisson polynomial superalgebras.

First, we need the notion of a Poisson supermodule over a Poisson superalgebra. 

\begin{definition}\label{defn.Poisson_module}
Let $R$ be a Poisson superalgebra and let $W=W_0\oplus W_1$ be a super vector space. We say $W$ is a \emph{(left) Poisson $R$-supermodule} if there is an even superalgebra homomorphism $\alpha:R\rightarrow\End(W)$ together with an even Lie superalgebra homomorphism $\beta:R\rightarrow\End(W)_L$ such that for all homogeneous $x,y\in R$,
\begin{enumerate}
    \item $\alpha(\{x,y\})=[\beta(x),\alpha(y)]_{gr}$,
    \item $\beta$ is an even $\alpha$-superderivation; i.e.,
    $\beta(xy)=\alpha(x)\beta(y)+(-1)^{|x||y|}\alpha(y)\beta(x)$.
\end{enumerate}
\end{definition}

\begin{example}
A Poisson superalgebra $R$ is naturally a Poisson $R$-supermodule via its multiplication and bracket. More generally, let $I$ be a \emph{Poisson superideal} of $R$; that is, $I$ is a $\ZZ_2$-graded ideal of $R$ such that $\{R,I\}\subseteq I$. Then $I$ is a Poisson $R$-supermodule again via the multiplication and bracket of $R$. Finally, we will see in Definition \ref{defn.enveloping_algebra} that the universal enveloping algebra of $R$ is also a Poisson $R$-supermodule.
\end{example}

In order to simplify the definition of Poisson supermodules, as well as the definition of the universal enveloping algebra, we introduce the following notation.

\begin{definition}\label{defn.prop_P}
Let $R$ be a Poisson superalgebra. We say a triple $(U,\alpha,\beta)$ \emph{satisfies property \textbf{P}} (with respect to $R$) if 
\begin{enumerate}
    \item $U$ is a superalgebra,
    \item $\alpha:R\rightarrow U$ is an even superalgebra homomorphism, and
    \item $\beta:R\rightarrow U_L$ is an even Lie superalgebra homomorphism,
\end{enumerate}
such that
\begin{enumerate}
    \item[(i)] $\alpha(\{x,y\})=[\beta(x),\alpha(y)]_{gr}$ for homogeneous $x,y\in R$,
    \item[(ii)] $\beta$ is an even $\alpha$-superderivation.
\end{enumerate}
In particular, a super vector space $W$ is a Poisson $R$-supermodule if and only if $(\End W,\alpha,\beta)$ satisfies property \textbf{P} for some $\alpha,\beta$.
\end{definition}

We are now ready to define and construct the universal enveloping algebra following \cite{huebschmann_Poisson/LR}, \cite{Oh_UEA}, \cite{Towers_PossonCohomology}.

\begin{definition}\label{defn.enveloping_algebra}
Let $R$ be a Poisson superalgebra. The \emph{universal enveloping algebra} (also called Poisson enveloping algebra or just enveloping algebra) of $R$ is a triple $(U(R),m,h)$—which we will often denote simply $U(R)$—that is universal with respect to property \textbf{P}. That is, $(U(R),m,h)$ satisfies property \textbf{P} and if $(B,\gamma,\delta)$ is another triple satisfying property \textbf{P}, then there is a unique (necessarily even) superalgebra homomorphism $\phi:U(R)\rightarrow B$ such that the following diagram commutes:
\begin{equation*}
\xymatrixcolsep{4pc}\xymatrixrowsep{4pc}\xymatrix{
   R \ar@<2pt>[r]^{m} \ar@<-2pt>[r]_{h} \ar@<2pt>[rd]_{\delta} \ar@<-2pt>[rd]^{\gamma} & U(R) \ar@{-->}[d]^{\phi} \\ & B
}
\end{equation*}
\end{definition}

\begin{theorem}\label{thm.enveloping algebra}
Every Poisson superalgebra $R$ has a unique universal enveloping algebra up to unique isomorphism.
\end{theorem}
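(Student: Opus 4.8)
The plan is to prove existence and uniqueness separately, following the standard pattern for universal objects. Uniqueness is the easy half and I would dispatch it first by the usual abstract-nonsense argument: if $(U(R),m,h)$ and $(U'(R),m',h')$ both satisfy property \textbf{P} and both are universal, then applying the universal property of the first to the triple $(U'(R),m',h')$ gives a superalgebra homomorphism $\phi:U(R)\to U'(R)$, and symmetrically the universal property of the second gives $\psi:U'(R)\to U(R)$. The composites $\psi\circ\phi$ and $\phi\circ\psi$ then satisfy the same commuting diagrams as the identity maps $\id_{U(R)}$ and $\id_{U'(R)}$ respectively; by the \emph{uniqueness} clause in the universal property, these composites must equal the identities. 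Hence $\phi$ is an isomorphism, and it is the unique one compatible with the structure maps.

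The substantive half is existence, which I would establish by an explicit construction via generators and relations. First I would form the free product (coproduct in the category of superalgebras) of two copies of $R$; more concretely, let $T$ be the free superalgebra generated by two copies $\{m(x):x\in R\}$ and $\{h(x):x\in R\}$ of the underlying super vector space of $R$, with $m,h$ taken to be even linear maps. I would then define $U(R)$ to be the quotient of $T$ by the two-sided homogeneous ideal $I$ generated by all elements encoding the required relations: that $m$ is an even superalgebra homomorphism (so $m(xy)-m(x)m(y)$ and $m(1_R)-1$ lie in $I$), that $h$ is an even Lie superalgebra homomorphism (so $h(\{x,y\})-[h(x),h(y)]_{gr}$ lies in $I$), together with relations (i) and (ii) of property \textbf{P}, namely $m(\{x,y\})-[h(x),m(y)]_{gr}$ and $h(xy)-m(x)h(y)-(-1)^{|x||y|}m(y)h(x)$. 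Because all these generators are homogeneous, the ideal $I$ is a graded ideal and the quotient $U(R)=T/I$ inherits a $\ZZ_2$-grading, making it a superalgebra; I would let $m,h:R\to U(R)$ denote the induced even maps. By construction the triple $(U(R),m,h)$ satisfies property \textbf{P}.

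It remains to verify the universal property. Given any triple $(B,\gamma,\delta)$ satisfying property \textbf{P}, the pair of even linear maps $\gamma,\delta:R\to B$ extends uniquely to an even superalgebra homomorphism $\widetilde\phi:T\to B$ out of the free superalgebra by the universal property of $T$. Since $(B,\gamma,\delta)$ satisfies all the relations defining property \textbf{P}, the homomorphism $\widetilde\phi$ kills every generator of $I$, hence factors through a unique even superalgebra homomorphism $\phi:U(R)\to B$ with $\phi\circ m=\gamma$ and $\phi\circ h=\delta$. Uniqueness of $\phi$ follows because $U(R)$ is generated as a superalgebra by the images of $m$ and $h$, so any homomorphism is determined by its values on those images. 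I expect the main obstacle to be purely bookkeeping: confirming that the sign conventions in the supercommutator bracket and in relations (i)--(ii) are respected both when building $I$ and when checking that $\widetilde\phi$ annihilates the generators, since a misplaced factor of $(-1)^{|x||y|}$ would break the factorization. No deep structural difficulty arises, as the argument is entirely formal once the graded free superalgebra and graded ideal are set up correctly.
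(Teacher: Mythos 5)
Your proposal is correct and matches the paper's proof essentially step for step: the paper likewise constructs $U(R)$ as the tensor superalgebra on two super vector space copies $\{m_x\}$ and $\{h_x\}$ of $R$, modulo the two-sided (homogeneous) ideal generated by exactly the relations you list, and verifies the universal property by extending $\gamma,\delta$ to the free object and checking the generators of the ideal are annihilated. The only cosmetic difference is that you spell out the standard uniqueness-of-universal-objects argument, which the paper merely cites as standard.
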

\begin{proof}
To construct an enveloping algebra $U(R)$, let $M=\{m_x \mid x\in R\}$ and $H=\{h_x \mid x\in R\}$ be two super vector space copies of $R$ with the obvious even linear isomorphisms. Consider the tensor superalgebra $T=T(M\oplus H)$. Let $J$ be the two-sided ideal of $T$ generated by elements of the form
\begin{itemize}
    \item $m_xm_y-m_{xy}$
    \item $h_xm_y-(-1)^{|x||y|}m_yh_x-m_{\{x,y\}}$
    \item $m_1-1$
    \item $h_xh_y-(-1)^{|x||y|}h_yh_x-h_{\{x,y\}}$
    \item $m_xh_y+(-1)^{|x||y|}m_yh_x-h_{xy}$
\end{itemize}
for $x,y\in R$, where elements of $M\oplus H$ are identified with their canonical images in $T$. We claim $(T/J,m,h)$, where $m(x):=m_x+J$ and $h(x):=h_x+J$, is the universal enveloping algebra of $R$. That the above triple satisfies property \textbf{P} is immediate from the definition, so suppose $(B,\gamma,\delta)$ is another triple satisfying property \textbf{P}. Define a superalgebra homomorphism $\Phi:T\rightarrow B$ by $\Phi(m_x):=\gamma(x)$ and $\Phi(h_x):=\delta(x)$ for $x\in R$. Then
\begin{align*}
\Phi(m_1-1) &= \gamma(1)-1=0,\\
\Phi(m_xm_y-m_{xy}) &= \gamma(x)\gamma(y)-\gamma(xy)=0,\\
\Phi(h_xm_y-(-1)^{|x||y|}m_yh_x-m_{\{x,y\}}) &= \delta(x)\gamma(y)-(-1)^{|x||y|}\gamma(y)\delta(x)-\gamma(\{x,y\})=0,\\
\Phi(h_xh_y-(-1)^{|x||y|}h_yh_x-h_{\{x,y\}}) &= \delta(x)\delta(y)-(-1)^{|x||y|}\delta(y)\delta(x)-\delta(\{x,y\})=0,\\
\Phi(m_xh_y+(-1)^{|x||y|}m_yh_x-h_{xy}) &= \gamma(x)\delta(y)+(-1)^{|x||y|}\gamma(y)\delta(x)-\delta(xy)=0.
\end{align*}
Hence we have a well-defined superalgebra homomorphism $\phi:U(R)\rightarrow B$ defined by $\phi\circ\pi:=\Phi$. That $\phi m=\gamma$ and $\phi h=\delta$ is obvious. Finally, if $\psi:U(R)\rightarrow B$ is another superalgebra homomorphism such that $\psi m=\gamma$ and $\psi h=\delta$, then $\phi=\psi$ on the generators of $U(R)$ so $\phi=\psi$ on all of $U(R)$. That is, $\phi$ is the unique map completing the diagram.

The uniqueness of the enveloping algebra up to unique isomorphism follows from the standard argument for universal objects.
\end{proof}

In the future, we will abuse notation and simply write $m(x)=m_x,h(x)=h_x$ for simplicity.

\begin{corollary}\label{cor.Poisson_module_corr}
There is a 1-1 correspondence between Poisson $R$-supermodules and $U(R)$-supermodules.
\end{corollary}
\begin{proof}
Suppose $W$ is a Poisson $R$-supermodule. Then the universal property of $U(R)$ induces a superalgebra homomorphism $\phi:U(R)\rightarrow\End W$, giving $W$ a $U(R)$-supermodule structure.

Conversely, suppose $W$ is a $U(R)$-supermodule via a superalgebra homomorphism $\phi:U(R)\rightarrow\End W$. Then since $(U(R),m,h)$ satisfies property \textbf{P}, so does the triple $(\End W,\phi m,\phi h)$, thus making $W$ a Poisson $R$-supermodule.
\end{proof}

\begin{corollary}\label{cor.m_inj}
If $(U(R),m,h)$ is the universal enveloping algebra of a Poisson superalgebra $R$, then $m$ is injective.
\end{corollary}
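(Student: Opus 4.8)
The plan is to build a concrete triple satisfying property \textbf{P} whose first structure map is injective, and then feed it into the universal property. The natural candidate is the regular Poisson module: take $B=\End R$ and let $R$ act on itself, setting $\gamma(x)=L_x$ (left multiplication, $L_x(w)=xw$) and $\delta(x)=\{x,-\}$ (the Hamiltonian superderivation). Both maps are even, with $\gamma(x),\delta(x)$ of parity $|x|$. If I can show $(\End R,\gamma,\delta)$ satisfies property \textbf{P}, then the universal property yields a (necessarily even) superalgebra homomorphism $\phi\colon U(R)\to\End R$ with $\phi\circ m=\gamma$, so that injectivity of $m$ follows at once from injectivity of $\gamma$.

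First I would record the two structural requirements. That $\gamma$ is an even superalgebra homomorphism follows from supercommutativity and the presence of a unit, since $L_{xy}=L_xL_y$ and $L_1=\id$. That $\delta$ is an even Lie superalgebra homomorphism, i.e.\ $\delta(\{x,y\})=[\delta(x),\delta(y)]_{gr}$, is precisely a rewriting of the super Jacobi identity, because $[\delta(x),\delta(y)]_{gr}(z)=\{x,\{y,z\}\}-(-1)^{|x||y|}\{y,\{x,z\}\}$.

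Next I would verify conditions (i) and (ii). For (i), evaluating $[\delta(x),\gamma(y)]_{gr}$ on an element $w$ gives $\{x,yw\}-(-1)^{|x||y|}y\{x,w\}$, which collapses to $\{x,y\}w=\gamma(\{x,y\})(w)$ by the Leibniz rule. Condition (ii) is the one genuine computation: I must establish the identity $\{xy,w\}=x\{y,w\}+(-1)^{|x||y|}y\{x,w\}$, obtained by passing to the second bracket slot via super antisymmetry, applying the Leibniz rule there, and then using supercommutativity of $R$ to reorder the factors. Keeping track of the accumulated signs is the main obstacle, though it is routine.

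Finally, $\gamma$ is injective: if $\gamma(x)=L_x=0$, then $x=L_x(1)=0$. Hence the relation $\phi\circ m=\gamma$ forces $m$ to be injective, which completes the argument.
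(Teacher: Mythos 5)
Your proposal is correct and follows essentially the same route as the paper's own proof: the paper also takes the regular representation $(\End R,\gamma,\delta)$ with $\gamma(a)$ left multiplication and $\delta(a)=\{a,\cdot\}$, invokes the universal property to get $\phi\colon U(R)\to\End R$ with $\phi\circ m=\gamma$, and concludes injectivity from $\gamma_x(1)=x$. The only difference is that you spell out the property \textbf{P} verifications (with the correct sign bookkeeping) that the paper leaves as an exercise.
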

\begin{proof}
For $x\in R$, let $\gamma_x$ denote left multiplication by $x$ and let $\delta_x$ denote the adjoint map $\delta_x(y)=\{x,y\}$. Define $\gamma,\delta:R\rightarrow\End R$ by
\begin{equation*}
\gamma(x):=\gamma_x, \qquad \delta(x):=\delta_x.
\end{equation*}
One easily verifies $(\End R, \gamma, \delta)$ satisfies property \textbf{P}, so there is an induced superalgebra homomorphism $\phi:U(R)\rightarrow\End R$. Therefore, if $x\in\text{ker}(m)$ then
\begin{equation*}
0=\phi m(x)=\gamma_x
\end{equation*}
and $\gamma_x(1)=x=0$. 
\end{proof}

The following two lemmas show that $U$ a functor from \textbf{Poiss} to the category \textbf{SAlg} of associative superalgebras.

\begin{lemma}
\label{lemma.prop_P_composition}
Let $R,P$ be Poisson superalgebras, and let $C$ be a superalgebra. If $\phi:R\rightarrow P$ is a super Poisson homomorphism and $(C,\gamma,\delta)$ satisfies property \textbf{P} with respect to $P$, then $(C,\gamma\phi,\delta\phi)$ satisfies property \textbf{P} with respect to $R$.
\end{lemma}
\begin{proof}
Straightforward.
\end{proof}

\begin{lemma}
\label{lemma.U_functor}
Let $(U_R,m_R,h_R), (U_P,m_P,h_P)$ be the universal enveloping algebras for Poisson superalgebras $R,P$, respectively. If $\phi:R\rightarrow P$ is a super Poisson homomorphism, then there is a unique superalgebra map $U(\phi):U(R)\rightarrow U(P)$ such that the following diagram commutes:
\begin{equation*}
\xymatrixcolsep{4pc}\xymatrixrowsep{4pc}\xymatrix{
    R \ar[r]^-{m_R,h_R} \ar[d]_{\phi} & U(R) \ar[d]^{U(\phi)} \\
    P \ar[r]_{m_P,h_P} & U(P)}
\end{equation*}
\end{lemma}
\begin{proof}
This follows immediately from the above lemma.
\end{proof}

\begin{definition}\label{defn.Poisson_tensor_prod}
Let $R,P$ be Poisson superalgebras with brackets $\{\cdot,\cdot\}_R,\{\cdot,\cdot\}_P$, respectively. Then the \emph{canonical bracket} on $R\stensor P$ is defined by
\begin{equation*}
    \{x\stensor z,y\stensor w\}_{R\stensor P}:=(-1)^{|z||y|}(\{x,y\}_R\stensor zw+xy\stensor\{z,w\}_P)
\end{equation*}
for $x,y\in R,z,w\in P$.
\end{definition}

One can check without much difficulty that the canonical bracket makes $R\stensor P$ into a Poisson superalgebra. Hence, hereafter whenever we refer to the \emph{Poisson superalgebra} $R\stensor P$ it will be endowed with the canonical bracket.

It is interesting to note how the functor $U$ behaves with respect to tensor products, and what the enveloping algebra of the opposite of a Poisson superalgebra is; we will define the latter after Proposition \ref{prop.tensor_UEA}. These results will also be of use to us in Section \ref{sec.Poisson_Hopf}. First, we need two technical lemmas.

\begin{lemma}\label{lem.prop_p_alt_condition}
Let $R$ be a Poisson superalgebra, let $B$ be a $\kk$-algebra, and let $m,h:R\rightarrow B$ be even linear maps satisfying
\begin{align*}
m(\{x,y\})=h(x)m(y)-(-1)^{|x||y|}m(y)h(x), \quad h(xy)=m(x)h(y)+(-1)^{|x||y|}m(y)h(x)
\end{align*}
for all $x,y\in R$. In particular, this holds if $(B,\gamma,\delta)$ satisfy property \textbf{P}. Then
\begin{equation*}
m(\{x,y\})=m(x)h(y)-(-1)^{|x||y|}h(y)m(x), \quad h(xy)=h(x)m(y)+(-1)^{|x||y|}h(y)m(x).
\end{equation*}
\end{lemma}
\begin{proof}
Since
\begin{align*}
&m(\{x,y\})+h(xy)=h(x)m(y)+m(x)h(y),\\
&m(\{y,x\})+h(yx)=h(y)m(x)+m(y)h(x),
\end{align*}
we have
\begin{align*}
2h(xy) &= h(x)m(y)+m(x)h(y)+(-1)^{|x||y|}(h(y)m(x)+m(y)h(x))\\
&= h(xy)+h(x)m(y)+(-1)^{|x||y|}h(y)m(x),\\
2m(\{x,y\}) &= h(x)m(y)+m(x)h(y)-(-1)^{|x||y|}(h(y)m(x)+m(y)h(x))\\
&= m(\{x,y\})+m(x)h(y)-(-1)^{|x||y|}h(y)m(x),
\end{align*}
from which the conclusions follow.
\end{proof}

\begin{lemma}
\label{lemma.tensor_of_maps}
Let $(U(R),m_R,h_R),(U(P),m_P,h_P)$ be the universal enveloping algebras of Poisson superalgebras $R,P$, respectively. Then
\begin{enumerate}
    \item $m_R\stensor m_P:R\stensor P\rightarrow U(R)\stensor U(P)$ is an even superalgebra homomorphism, and
    \item $m_R\stensor h_P+h_R\stensor m_P:R\stensor P\rightarrow (U(R)\stensor U(P))_L$ is an even Lie superalgebra homomorphism.
\end{enumerate}
\end{lemma}
\begin{proof}
(i) That $m_R\stensor m_P$ is an even superalgebra homomorphism follows from the fact that $m_R$ and $m_P$ are.\\
(ii) For $x,y\in R$ and $z,w\in P$,
\begin{align*}
&(m_R\stensor h_P+h_R\stensor m_P)(\{x\stensor z,y\stensor w\}_{R\stensor P})-\Big[(m_R\stensor h_P+h_R\stensor m_P)(x\stensor z)(m_R\stensor h_P+h_R\stensor m_P)(y\stensor w)\\
    & \qquad\qquad -(-1)^{|x\stensor z||y\stensor w|}(m_R\stensor h_P+h_R\stensor m_P)(y\stensor w)(m_R\stensor h_P+h_R\stensor m_P)(x\stensor z)\Big]\\
&= (-1)^{|y||z|}\big(m_R(\{x,y\}_R)\stensor h_P(zw)+m_R(xy)\stensor h_P(\{z,w\}_P)+h_R(\{x,y\}_R)\stensor m_P(zw)+h_R(xy)\stensor m_P(\{z,w\}_P)\big)\\
    & \qquad\qquad -\Big[(m_R(x)\stensor h_P(z)+h_R(x)\stensor m_P(z))(m_R(y)\stensor h_P(w)+h_R(y)\stensor m_P(w))-(-1)^{|x\stensor z||y\stensor w|}(m_R(y)\stensor h_P(w)\\
    & \qquad\qquad +h_R(y)\stensor m_P(w))(m_R(x)\stensor h_P(z)+h_R(x)\stensor m_P(z))\Big] \\
&= (-1)^{|y||z|}\Bigg[(h_R(x)m_R(y)-(-1)^{|x||y|}m_R(y)h_R(x))\stensor(m_P(z)h_P(     w)+(-1)^{|z||w|}m_P(w)h_P(z))+m_R(xy)\stensor(h_P(z)h_P(w)\\
    &\qquad\qquad -(-1)^{|z||w|}h_P(w)h_P(z))+(h_R(x)h_R(y)-(-1)^{|x||y|}h_R(y)h_R(x))\stensor m_P(zw)+(m_R(x)h_R(y)\\
    &\qquad\qquad+(-1)^{|x||y|}m_R(y)h_R(x))\stensor(h_P(z)m_P(w)-(-1)^{|z||w|}m_P(w)h_P(z))-\bigg[m_R(xy)\stensor h_P(z)h_P(w)\\
    &\qquad\qquad +m_R(x)h_R(y)\stensor h_P(z)m_P(w)+h_R(x)m_R(y)\stensor m_P(z)h_P(w)+h_R(x)h_R(y)\stensor m_P(zw)\\
    &\qquad\qquad-(-1)^{|x||y|+|z||w|}\Big[m_R(yx)\stensor h_P(w)h_P(z)+m_R(y)h_R(x)\stensor h_P(w)m_P(z)\\
    &\qquad\qquad+h_R(y)m_R(x)\stensor m_P(w)h_P(z)+h_R(y)h_R(x)\stensor m_P(wz)\Big]\bigg]\Bigg]\\
&= -(-1)^{|y||z|+|x||y|}m_R(y)h_R(x)\stensor(m_P(z)h_P(w)-(-1)^{|z||w|}h_P(w)m_P(z))+(-1)^{|y||z|+|z||w|}(h_R(x)m_R(y)\\
    & \qquad\qquad -(-1)^{|x||y|}m_R(y)h_R(x))\stensor m_P(w)h_P(z)-(-1)^{|y||z|+|z||w|}(m_R(x)h_R(y)-(-1)^{|x||y|}h_R(y)m_R(x))\stensor m_P(w)h_P(z)\\
    & \qquad\qquad +(-1)^{|y||z|+|x||y|}m_R(y)h_R(x)\stensor(h_P(z)m_P(w)-(-1)^{|z||w|}m_P(w)h_P(z))\\
&= -(-1)^{|y||z|+|x||y|}m_R(y)h_R(x)\stensor m_P(\{z,w\}_P)+(-1)^{|y||z|+|z||w|}m_R(\{x,y\}_R)\stensor m_P(w)h_P(z)\\
    & \qquad\qquad -(-1)^{|y||z|+|z||w|}m_R(\{x,y\}_R)\stensor m_P(w)h_P(z)+(-1)^{|y||z|+|x||y|}m_R(y)h_R(x)\stensor m_P(\{z,w\}_P)\\
&= 0,
\end{align*}
where the penultimate equality follows from Lemma \ref{lem.prop_p_alt_condition}. Clearly $m_R\stensor h_P+h_R\stensor m_P$ preserves the grading, so it is indeed an even Lie superalgebra homomorphism.
\end{proof}

\begin{proposition}
\label{prop.tensor_UEA}
Let $(U(R),m_R,h_R),(U(P),m_P,h_P)$ be the universal enveloping algebras of Poisson superalgebras $R,P$, respectively. Then $(U(R)\stensor U(P),m_R\stensor m_P,m_R\stensor h_P+h_R\stensor m_P)$ is the universal enveloping algebra of the Poisson superalgebra $R\stensor P$.
\end{proposition}
\begin{proof}
First, note that
\begin{align*}
    (m_R\stensor m_P)(\{x\stensor z,y\stensor w\}_{R\stensor P}) &= (m_R\stensor h_P+h_R\stensor m_P)(x\stensor z)(m_R\stensor m_P)(y\stensor w)\\
    & \quad-(-1)^{|x\stensor z||y\stensor w|}(m_R\stensor m_P)(y\stensor w)(m_R\stensor h_P+h_R\stensor m_P)(x\stensor z)
\end{align*}
and
\begin{align*}
    (m_R\stensor h_P+h_R\stensor m_P)((x\stensor z)(y\stensor w)) &= (m_R\stensor m_P)(x\stensor z)(m_R\stensor h_P+h_R\stensor m_P)(y\stensor w)\\
    & \quad +(-1)^{|x\stensor z||y\stensor w|}(m_R\stensor m_P)(y\stensor w)(m_R\stensor h_P+h_R\stensor m_P)(x\stensor z).
\end{align*}
hold for all $x,y\in R$ and $z,w\in P$. Hence by Lemma \ref{lemma.tensor_of_maps}, the triple $(U(R)\stensor U(P),m_R\stensor m_P,m_R\stensor h_P+h_R\stensor m_P)$ satisfies property \textbf{P}.

Let $i_1:R\rightarrow R\stensor P$ and $i_2:P\rightarrow R\stensor P$ be the super Poisson homomorphisms defined by
\begin{align*}
&i_1(x):=x\stensor 1\\
&i_2(z):=1\stensor z
\end{align*}
for $x\in R$ and $z\in P$. Let $C$ be a superalgebra with multiplication map $\mu_C$. If $\gamma,\delta:R\stensor P\rightarrow C$ are maps such that $(C,\gamma,\delta)$ satisfies property \textbf{P} with respect to $R\stensor P$, then by Lemma \ref{lemma.prop_P_composition} there exist (even) superalgebra maps $f:U(R)\rightarrow C$ and $g:U(P)\rightarrow C$ such that the following diagrams commute
\begin{equation*}
\xymatrixcolsep{4pc}\xymatrixrowsep{4pc}\xymatrix{
    R \ar[r]^-{m_R,h_R} \ar[d]_{i_1} & U(R) \ar[d]^{f} \\
    R\stensor P \ar[r]_{\gamma,\delta} & C}
    \qquad\qquad
    \xymatrixcolsep{4pc}\xymatrixrowsep{4pc}\xymatrix{
    P \ar[r]^-{m_P,h_P} \ar[d]_{i_2} & U(P) \ar[d]^{g} \\
    R\stensor P \ar[r]_{\gamma,\delta} & C}
\end{equation*}
Moreover, we have
\begin{align*}
\delta i_1(x)\gamma i_2(z)-(-1)^{|x||z|}\gamma i_2(z)\delta i_1(x) &= \delta(x\stensor 1)\gamma(1\stensor z)-(-1)^{|x||z|}\gamma(1\stensor z)\delta(x\stensor 1)\\
&= \gamma(\{x\stensor 1,1\stensor z\}_{R\stensor P})\\
&= \gamma(\{x,1\}_R)\stensor z+x\stensor\gamma(\{1,z\}_P) = 0.
\end{align*}
Therefore,
\begin{align*}
\mu_C(f\stensor g)(m_R\stensor m_P)(x\stensor z) &= fm_R(x)gm_P(z)=\gamma(i_1(x)i_2(z))=\gamma(x\stensor z) \\
\mu_C(f\stensor g)(m_R\stensor h_P+h_R\stensor m_P)(x\stensor z) &= fm_R(x)gh_P(z)+gh_R(x)gm_P(z)\\
&= \gamma i_1(x)\delta i_2(z)+\delta i_1(x)\gamma i_2(z)\\
&= \gamma i_1(x)\delta i_2(z)+(-1)^{|x||z|}\gamma i_2(z)\delta i_1(x)\\
&= \delta(i_1(x)i_2(z))\\
&= \delta(x\stensor z).
\end{align*}
Thus $\mu_C(f\stensor g):U(R)\stensor U(P)\rightarrow C$ is a superalgebra map such that
\begin{equation*}
\mu_C(f\stensor g)(m_R\stensor m_P)=\gamma, \quad \mu_C(f\stensor g)(m_R\stensor h_P+h_R\stensor m_P)=\delta.
\end{equation*}
Now, if $\phi:U(R)\stensor U(P)\rightarrow C$ is another superalgebra map such that
\begin{equation*}
\phi(m_R\stensor m_P)=\gamma, \quad \phi(m_R\stensor h_P+h_R\stensor m_P)=\delta,
\end{equation*}
then
\begin{align*}
&\mu_C(f\stensor g)(m_R(x)\stensor 1)=\gamma(x\stensor 1)=\phi(m_R(x)\stensor 1)\\
&\mu_C(f\stensor g)(1\stensor m_P(z))=\gamma(1\stensor z)=\phi(1\stensor m_P(z))\\
&\mu_C(f\stensor g)(h_R(x)\stensor 1)=\delta(x\stensor 1)=\phi(h_R(x)\stensor 1)\\
&\mu_C(f\stensor g)(1\stensor h_P(z))=\delta(1\stensor z)=\phi(1\stensor h_P(z))
\end{align*}
for all $x\in R$ and $z\in P$. Finally, since $U(R)$ is generated by $m_R(R)$ and $h_R(R)$, and likewise for $U(P)$, we have $\mu_C(f\stensor g)=\phi$. Therefore, $(U(R)\stensor U(P),m_R\stensor m_P,m_R\stensor h_P+h_R\stensor m_P)$ is universal with respect to property \textbf{P}.
\end{proof}

Recall the opposite superalgebra of a superalgebra $B$ is denoted by $B^{\op}=(B,\circ)$. Let $(R,\cdot,\{\cdot,\cdot\})$ be a Poisson superalgebra. Then the \emph{opposite Poisson superalgebra} is $(R^{\op},\circ,\{\cdot,\cdot\}^{\op})$  with Poisson bracket given by $\{x,y\}^{\op}:=-\{x,y\}$.

\begin{proposition}
\label{prop.Opposite_UEA}
Let $(U(R),m,h)$ be the universal enveloping algebra of a Poisson superalgebra $R$. Then $(U(R)^{\op},m,h)$ is the universal enveloping algebra of $R^{\op}$.
\end{proposition}
\begin{proof}
We first show $(U(R),m,-h)$ is the universal enveloping algebra of $R^{\op}$. Indeed, suppose $(B,\gamma,\delta)$ satisfies property \textbf{P} with respect to $R^{\op}$. Then
\begin{align*}
\gamma(\{x,y\}) 
    &= -\gamma(\{x,y\}^{\op})=-\delta(x)\gamma(y)+(-1)^{|x||y|}\gamma(y)\delta(x)\\
    &= (-\delta)(x)\gamma(y)-(-1)^{|x||y|}\gamma(y)(-\delta)(x),\\
-\delta(\{x,y\})
    &=\delta(\{x,y\}^{\op})=\delta(x)\delta(y)-(-1)^{|x||y|}\delta(y)\delta(x)\\
    &= (-\delta)(x)(-\delta)(y)-(-1)^{|x||y|}(-\delta)(y)(-\delta)(x),\\
-\delta(xy)
    &= -\gamma(x)\delta(y)-(-1)^{|x||y|}\gamma(y)\delta(x)=\gamma(x)(-\delta)(y)+(-1)^{|x||y|}\gamma(y)(-\delta)(x),
\end{align*}
for $x,y\in R$, so $(B,\gamma,-\delta)$ satisfies property \textbf{P} with respect to $R$. Similarly, if $(B,\gamma,\delta)$ satisfies property \textbf{P} with respect to $R$, then $(B,\gamma,-\delta)$ satisfies property \textbf{P} with respect to $R^{\op}$. That $(U(R),m,-h)$ is the universal enveloping algebra of $R^{\op}$ follows immediately.

Consider now the superalgebra map
\begin{align*}
\phi:U(R) &\rightarrow U(R)^{\op}\\
m_x &\mapsto m_x\\
h_x &\mapsto -h_x.
\end{align*}
One easily shows this map is indeed well-defined as, e.g.,
\begin{align*}
\phi(h_xm_y-(-1)^{|x||y|}m_yh_x-m_{\{x,y\}})&=-h_x\circ m_y+(-1)^{|x||y|}m_y\circ h_x-m_{\{x,y\}}\\
&=h_xm_y-(-1)^{|x||y|}m_yh_x-m_{\{x,y\}}\\
&=0,
\end{align*}
with the other relations of $U(R)$ similarly mapping to 0. In the opposite direction, one can define a superalgebra map
\begin{align*}
\psi: U(R)^{\op} &\rightarrow U(R)\\
m_x &\rightarrow m_x\\
h_x &\rightarrow -h_x
\end{align*}
which is likewise well-defined. Clearly the $\phi$ and $\psi $ are inverses, proving $U(R)\cong U(R)^{\op}$. Further, is is clear $\phi h=-h$ and $\phi m=m$. It follows that $(U(R)^{\op},m,h)$ is indeed the universal enveloping algebra of $R^{\op}$, completing the proof.
\end{proof}

\subsection{Enveloping Algebras of Poisson Symplectic Superalgebras}
Consider the Poisson symplectic superalgebras $PS_\sigma(L)$ introduced in Section \ref{sec.Symplectic_superalgs}. Following \cite[\S 2.2.3]{LOV_PBW}, we will describe the universal enveloping algebra $U(PS_\sigma(L))$ as a \emph{Sridharan superalgebra} \cite[Definition 2.1]{Sridharan}.

Denote by $L^0$ the abelian Lie superalgebra on the underlying super vector space of $L$ and consider the adjoint action of $L$ on $L^0$:
\begin{equation*}
    \ad(x)(u):=[x,u]_L\in L^0
\end{equation*}
for $x\in L,u\in L^0$. Clearly this defines a Lie superalgebra homomorphism $\ad:L\rightarrow\Der(L^0)$ so we can form the semidirect product $\mathfrak{g}:=L^0\rtimes_{\ad} L$; recall this is the Lie superalgebra whose underlying super vector space is $L^0\oplus L$ with bracket given by
\begin{equation*}
    [u+x,v+y]:=(\ad(x)(v)-(-1)^{|u||y|}\ad(y)(u))+[x,y]_L
\end{equation*}
for $x,y\in L,u,v\in L^0$. For notational convenience, denote by $x^0$ (resp. $x^1$) the canonical image of $x\in L^0$ in $\mathfrak{g}$ (resp. the canonical image of $x\in L$ in $\mathfrak{g}$). Notice we can extend the cocycle $\sigma$ to a cocycle $\sigma_\mathfrak{g}$ on $\mathfrak{g}$ as follows:
\begin{equation*}
    \sigma_\mathfrak{g}(x^0,y^0):=\sigma_\mathfrak{g}(x^1,y^1):=0, \quad \sigma_\mathfrak{g}(x^0,y^1):=\sigma_\mathfrak{g}(x^1,y^0):=\sigma(x,y).
\end{equation*}
Now, consider the Sridharan superalgebra (also called the modified Lie enveloping (super)algebra) of the pair $(\mathfrak{g},\sigma_\mathfrak{g})$:
\begin{equation*}
    U_{\sigma_\mathfrak{g}}(\mathfrak{g}):=\frac{T(\mathfrak{g})}{J_\sigma},
\end{equation*}
$J_\sigma$ is the ideal generated by the elements
\begin{equation*}
x\stensor y-(-1)^{|x||y|}y\stensor x-[x,y]_\mathfrak{g}-\sigma_\mathfrak{g}(x,y)
\end{equation*}
for $x,y\in\mathfrak{g}$. Notice that such elements are homogeneous since $\sigma$ is grading-preserving. Hence $U_{\sigma_\mathfrak{g}}(\mathfrak{g})$ inherits the $\ZZ_2$-grading from $T(\mathfrak{g})$. Finally, let $\iota:\mathfrak{g}\rightarrow U_{\sigma_\mathfrak{g}}(\mathfrak{g})$ denote the canonical inclusion, let $\alpha:PS_\sigma(L)\rightarrow U_{\sigma_\mathfrak{g}}(\mathfrak{g})$ be the even superalgebra homomorphism defined by $x\mapsto\iota(x^0)$ for $x\in L$, and let $\beta:PS_\sigma(L)\rightarrow U_{\sigma_\mathfrak{g}}(\mathfrak{g})$ be the even $\alpha$-superderivation defined by $x\mapsto\iota(x^1)$ for $x\in L$. This is well-defined as for $x,y\in L$ we have
\begin{equation*}
    \alpha(x)\stensor\alpha(y)-(-1)^{|x||y|}\alpha(y)\stensor\alpha(x)=[x^0,y^0]_\mathfrak{g}+\sigma_\mathfrak{g}(x^0,y^0)=0,
\end{equation*}
and likewise for $\beta$.

\begin{proposition}[c.f. {\cite[Proposition 2.11]{LOV_PBW}}]\label{prop.symplectic_enveloping_alg}
The triple $(U_{\sigma_\mathfrak{g}}(\mathfrak{g}),\alpha,\beta)$ is the universal enveloping algebra of the Poisson symplectic superalgebra $PS_\sigma(L)$.
\end{proposition}
\begin{proof}
First, notice that for $x,y\in L$
\begin{align*}
    \alpha(\{x,y\}_\sigma)&=[x,y]^0+\sigma(x,y)=[\beta(x),\alpha(y)]_{gr},\\
    \beta(\{x,y\}_\sigma)&=[x,y]^1+\beta(\sigma(x,y))=[\beta(x),\beta(y)]_{gr},
\end{align*}
since $\alpha$ is a superalgebra homomorphism and $\beta$ is an $\alpha$-superderivation. It is not too difficult to show by induction then, that $(U_{\sigma_\mathfrak{g}}(\mathfrak{g}),\alpha,\beta)$ satisfies property \textbf{P}. That this triple is universal with respect to property \textbf{P} is simple using Lemma \ref{lem.prop_p_alt_condition}.
\end{proof}

\begin{example}
Consider the $n$-th symplectic Poisson superalgebra $P_n$. Recall this is the Poisson symplectic superalgebra $PS_\sigma(L_n)$ where $L_n$ is an abelian Lie superalgebra with a basis $\{x_i,y_j\}_{1\leq i,j\leq n}$ consisting of odd elements, and where $\sigma$ is defined by
\begin{equation*}
    \sigma(x_i,y_j):=\sigma(y_i,x_j):=\delta_{ij}, \quad \sigma(x_i,x_j):=\sigma(y_i,y_j):=0.
\end{equation*}
By the above proposition, $U(P_n)$ is the superalgebra with odd generators
\begin{equation*}
    x_1^0,\ldots,x_n^0,y_1^0,\ldots,y_n^0,x_1^1,\ldots,x_n^1,y_1^1,\ldots,y_n^1,
\end{equation*}
and relations
\begin{equation*}
    [x_i^\ep,x_j^{\ep'}]_{\gr}=[y_i^\ep,y_j^{\ep'}]_{\gr}=[x_i^0,y_j^0]_{\gr}=[x_i^1,y_j^1]_{\gr}=0,\, [x_i^0,y_j^1]_{\gr}=[x_i^1,y_j^0]_{\gr}=\delta_{ij}
\end{equation*}
for $1\leq i,j\leq n$ and $\ep,\ep'\in\{0,1\}$. By relabeling the variables
\begin{align*}
    x_i^0&\mapsto X_i,\\
    y_i^0&\mapsto X_{n+i},\\
    x_i^1&\mapsto Y_{n+i},\\
    y_i^1&\mapsto Y_i,
\end{align*}
we see $U(P_n)\cong C_{0\mid 2n}$, the \emph{Weyl superalgebra} of degree $0\mid 2n$ (see \cite[Definition 8]{Weyl_superalg}, and see \cite[Definition 3.1]{Weyl_superalg_2} for a more general treatment of Clifford/Weyl superalgebras).
\end{example}

\subsection{Enveloping Algebras of Quadratic Poisson Polynomial Superalgebras}
In \cite{Towers_PossonCohomology}, the universal enveloping algebras of quadratic Poisson polynomial algebras and their quadratic duals were studied. We briefly recall the results of interest to us and use them to find a presentation for the enveloping algebras of the Poisson superalgebras discussed in Example \ref{eg.skew_symm}.

By \cite[Lemma 2.2]{Towers_PossonCohomology}, the Poisson enveloping algebra $U(P)$ of $P$ is the triple $(R,m,h)$ where $R$ is the quadratic algebra $T(m_{x_i},h_{x_i})/I$ generated by the even elements $m_{x_i},h_{x_i}$, where $I$ is the two-sided ideal generated by
\begin{align*}
&m_{x_i}m_{x_j}-m_{x_j}m_{x_i},\\
&h_{x_i}m_{x_j}-m_{x_j}h_{x_i}-\sum_{r,\ell}C_{r,\ell}^{i,j}m_{x_r}m_{x_\ell},\\
&h_{x_i}h_{x_j}-h_{x_j}h_{x_i}-\sum_{r,\ell}C_{r,\ell}^{i,j}(m_{x_r}h_{x_\ell}+m_{x_\ell}h_{x_r}),
\end{align*}
for $1\leq i,j\leq n$, $m:P\rightarrow R$ is the algebra homomorphism defined by $m(x_i)=m_{x_i}$, and 
$h:P\rightarrow R$ is a linear map defined by $h(x_i)=h_{x_i}$ such that $h(xy)=m(x)h(y)+m(y)h(x)$ for $x,y\in P$. Similarly, by \cite[Corollary 4.6]{Towers_PossonCohomology}, the Poisson enveloping algebra $U(P^!)$ of $P^!$ is the triple $(S,m^!,h^!)$ where $S$ is the quadratic algebra $T(m_{\theta_r},h_{\theta_\ell})/J$ generated by the odd elements $m_{\theta_r},h_{\theta_\ell}$, where $J$ is two-sided ideal generated by
\begin{align*}
    &m_{\theta_r}m_{\theta_\ell}+m_{\theta_\ell}m_{\theta_r},\\
    &h_{\theta_r}m_{\theta_\ell}+m_{\theta_\ell}h_{\theta_r}-\sum_{i,j}C_{r,\ell}^{i,j}m_{\theta_j}m_{\theta_i},\\
    &h_{\theta_r}h_{\theta_\ell}+h_{\theta_\ell}h_{\theta_r}-\sum_{i,j}C_{r,\ell}^{i,j}(m_{\theta_j}h_{\theta_i}-m_{\theta_i}h_{\theta_j}),
\end{align*}
for $1\leq r,\ell\leq n$, $m^!:P^!\rightarrow S$ is the superalgebra homomorphism defined by $m(\theta_r)=m_{\theta_r}$, and $h^!:P^!\rightarrow S$ is the linear map defined by $h(\theta_r)=h_{\theta_r}$ such that $h(wz)=m(w)h(z)+(-1)^{|w||z|}m(z)h(w)$ for $w,z\in P^!$.

Additionally, it was shown both $U(P)$ and $U(P)^!$ are Koszul and moreover that $U(P^!)\cong U(P)^!$ \cite[Lemma 4.5 and Remark 4.7]{Towers_PossonCohomology}.

\begin{example}\label{eg.skew_symm_enveloping_alg}
Let $(\lambda_{ij})$ be an $n\times n$ skew-symmetric matrix. Consider the quadratic Poisson polynomial algebra $P=\kk[x_1,\ldots,x_n]$ with Poisson bracket
$\{x_i,x_i\}=\lambda_{ij}x_ix_j$, and consider its quadratic dual $P^!=\Lambda(\theta_1,\ldots,\theta_n)$ with dual Poisson bracket $\{\theta_r,\theta_\ell\}=\lambda_{r\ell}\theta_\ell\theta_r$ (Example \ref{eg.skew_symm}). Then by the aforementioned results of \cite{Towers_PossonCohomology}, we have the presentations
\begin{equation*}
    U(P)\cong\frac{\kk<m_{x_1},\ldots,m_{x_n},h_{x_1},\ldots,h_{x_n}>}{([m_{x_i},m_{x_j}],\, [h_{x_i},m_{x_j}]-\lambda_{ij}m_{x_i}m_{x_j},\, [h_{x_i},h_{x_j}]-\lambda_{ij}(m_{x_i}h_{x_j}+m_{x_j}h_{x_i}))}
\end{equation*}
and
\begin{equation*}
    U(P^!)\cong\frac{\kk<m_{\theta_1},\ldots,m_{\theta_n},h_{\theta_1},\ldots,h_{\theta_n}>}{([m_{\theta_k},m_{\theta_\ell}]_{\gr},\, [h_{\theta_k},m_{\theta_\ell}]_{\gr}-\lambda_{k\ell}m_{\theta_\ell}m_{\theta_k},\, [h_{\theta_k},h_{\theta_\ell}]_{\gr}-\lambda_{k\ell}(m_{\theta_\ell}h_{\theta_k}-m_{\theta_k}h_{\theta_\ell}))}.
\end{equation*}
\end{example}

\section{The PBW Theorem for Lie-Rinehart Superalgebras}\label{sec.LR_PBW}
In \cite{LWZ_PoissonHopf}, the authors prove a PBW Theorem for Poisson algebras via the PBW Theorem for Lie-Rinehart algebras. However, the version of the PBW theorem they use, proved by Rinehart in \cite{Rinehart_PBW}, contains a projectivity hypothesis which does not always hold for Poisson algebras; we discuss the translation of Lie-Rinehart PBW theorems to the Poisson setting in Section \ref{sec.Poisson_PBW}. In this section, we will prove a PBW Theorem for Lie-Rinehart superalgebras that avoids this restriction and which will be used in Section \ref{sec.Poisson_PBW} to prove a PBW theorem which holds for \emph{all} Poisson superalgebras (over a field). In this section alone, we work over a commutative ring $S$ of characteristic 0 rather than a field $\kk$.

\begin{definition}
A \emph{Lie-Rinehart superalgebra} is a pair $(A,L)$, where $A$ is a supercommutative $S$-superalgebra and $L$ is an $S$-Lie superalgebra as well as an $A$-supermodule, together with an even $S$-Lie superalgebra and $A$-supermodule homomorphism $\rho:L\rightarrow \Der(A)$ to the Lie superalgebra $\Der(A)$ of superderivations of $A$, such that for $x,y\in L, a\in A$,
\begin{equation*}
[x,ay] = (-1)^{|a||x|}a[x,y]+\rho(x)(a)y.
\end{equation*}
We will typically denote $\rho(x)(a)$ by $x(a)$ for simplicity, and we call $\rho$ the \emph{anchor map}.
\end{definition}

We now give some examples of Lie-Rinehart superalgebras.

\begin{example}\label{eg.LR}
\begin{enumerate}
    \item \label{LR1} If $A$ is any supercommutative superalgebra and $L$ is any $A$-Lie superalgebra, then we can give the pair $(A,L)$ the structure of a Lie-Rinehart superalgebra via the zero anchor: $x(a):=0$ for all $x\in L$ and $a\in A$. Conversely, if the pair $(A,L)$ is a Lie-Rinehart superalgebra with zero anchor map, then $L$ is an $A$-Lie superalgebra.

    Alternatively, if $A$ is any supercommutative superalgebra and $L$ is any $S$-Lie superalgebra which is also an $A$-supermodule, denote by $L^0$ the abelian $S$-Lie superalgebra on the underlying supermodule of $L$. Then the pair $(A,L^0)$ is given the structure of a Lie-Rinehart superalgebra via the zero anchor map.
    
    \item \label{LR2} For any supercommutative superalgebra $A$, the pair $(A,\Der(A))$ can be made into a Lie-Rinehart superalgebra via the identity anchor map.
    
    \item \label{LR3} If $R$ is a Poisson superalgebra, then $(R,R)$ is a Lie-Rinehart superalgebra with the natural action $x\cdot y=xy$ and anchor map $x\rightarrow \{x,\cdot\}$. In Section \ref{sec.Poisson_PBW}, we will describe another way to create a Lie-Rinehart superalgebra from a Poisson superalgebra that will be more suited to our purposes.
\end{enumerate}
\end{example}

By analogy with \cite{Rinehart_PBW}, we describe the universal enveloping algebra of a Lie-Rinehart superalgebra. Given a Lie-Rinehart superalgebra $(A,L)$ we can form the semidirect product $A_L\rtimes_\rho L$; note $A_L$ is an abelian Lie superalgebra since $A$ is supercommutative. Consider now the Lie enveloping algebra $U:=U(A_L\rtimes L)$ with inclusion map $\iota:A_L\rtimes L\rightarrow U$. Let $U^+$ denote the subalgebra of $U$ generated by $\iota(A_L\rtimes L)$. Lastly, let $P$ denote the two-sided ideal of $U^+$ generated by the elements $\iota(az)-\iota(a)\stensor \iota(z)$ for all $a\in A, z\in A_L\rtimes L$. Then the \emph{universal enveloping algebra} of $(A,L)$ is the quotient
\begin{equation*}
V(A,L):=U^+/P.
\end{equation*}
Note that $P$ is generated by homogeneous elements so $V(A,L)$ is in fact an $S$-superalgebra, as well as an $A$-supermodule, as it inherits a $\ZZ_2$-grading from $U^+$.

As with Poisson superalgebras, the enveloping algebra $V(A,L)$ has a universal property for which we need the following definition.

\begin{definition}
For a Lie-Rinehart superalgebra $(A,L)$, we say a triple $(U,f,g)$ \emph{satisfies property \textbf{R}} if
\begin{enumerate}
    \item $U$ is a superalgebra,
    \item $f:A\rightarrow U$ is an even superalgebra homomorphism,
    \item $g:L\rightarrow U_L$ is an even Lie superalgebra homomorphism,
\end{enumerate}
such that
\begin{enumerate}
\item[(i)] $f(x(a)) = [g(x),f(a)]_{gr}$,
\item[(ii)] $g(ax) = f(a)g(x)$,
\end{enumerate}
for all $a\in A,x\in L$.
\end{definition}

Continuing the analogy with Poisson superalgebras, we can use property \textbf{R} to define Lie-Rinehart supermodules over a Lie-Rinehart superalgebra.

\begin{definition}
A \emph{(left) Lie-Rinehart $(A,L)$-supermodule} is an $S$-supermodule $W$ together with maps $f:A\rightarrow\End(W)$ and $g:L\rightarrow\End(W)$ such that $(\End(W),f,g)$ satisfies property \textbf{R}. In other words, $W$ is an $L$-module as well as an $A$-supermodule, with compatibility conditions between the actions.
\end{definition}

The following is a straightforward generalization of the universal property of the enveloping algebra of a Lie-Rinehart algebra as described in \cite{huebschmann_Poisson/LR}.

\begin{proposition}
\label{prop.LR_UEA_universal_prop}
The triple $(V(A,L),\iota_A,\iota_L)$ is universal with respect to property \textbf{R}, where $\iota_A,\iota_L$ are the natural inclusions of $A,L$ in $V(A,L)$, respectively.
\end{proposition}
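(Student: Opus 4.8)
The plan is to prove both halves of the universal property: first that the triple $(V(A,L),\iota_A,\iota_L)$ itself satisfies property \textbf{R}, and then that it is initial among all such triples.

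For the first half, everything is read off from the construction $V(A,L)=U^+/P$. Since $i\colon A\rtimes L\to U$ is the canonical even Lie superalgebra map into $U=U(A\rtimes L)$, restricting $i$ to $A$ and to $L$ and reducing mod $P$ yields even maps $\iota_A,\iota_L$. The bracket in $A\rtimes_\rho L$ specializes to $[a,b]=0$ for $a,b\in A$, to $[x,a]=x(a)$ for $x\in L,a\in A$, and to $[x,y]=[x,y]_L$ for $x,y\in L$; applying $i$ and passing to the quotient gives respectively that $\iota_A$ is multiplicative (hence an even superalgebra map), condition (i) $\iota_A(x(a))=[\iota_L(x),\iota_A(a)]_{gr}$, and that $\iota_L$ is an even Lie superalgebra map. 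Condition (ii) $\iota_L(ax)=\iota_A(a)\iota_L(x)$ is precisely the image of the defining generator $i(ax)-i(a)i(x)$ of $P$. The one point requiring care is unitality of $\iota_A$: one checks $[1_A,z]=0$ in $A\rtimes L$ (derivations annihilate $1_A$ and the bracket on $A$ is trivial), so $i(1_A)$ is central, while the generator $i(1_A\cdot z)-i(1_A)i(z)$ shows $i(1_A)$ acts as a left, and by centrality two-sided, identity modulo $P$; hence $\iota_A(1_A)=1$ in $V(A,L)$.

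For universality, suppose $(U',f,g)$ satisfies property \textbf{R}. I would define $F\colon A\rtimes L\to U'$ by $F(a+x)=f(a)+g(x)$ and show $F$ is an even Lie superalgebra homomorphism. Expanding $[F(a+x),F(b+y)]_{gr}$ bilinearly into four supercommutators, each term matches the corresponding piece of $F([a+x,b+y])$: supercommutativity of $A$ together with $f$ being an algebra map gives $[f(a),f(b)]_{gr}=0$; condition (i) gives $[g(x),f(b)]_{gr}=f(x(b))$ and, via super-antisymmetry, $[f(a),g(y)]_{gr}=-(-1)^{|a||y|}f(y(a))$; and $g$ being a Lie map gives $[g(x),g(y)]_{gr}=g([x,y]_L)$. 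These assemble exactly into $F$ applied to the semidirect-product bracket, so $F$ is a Lie superalgebra homomorphism.

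By the universal property of $U=U(A\rtimes L)$, $F$ extends uniquely to a superalgebra homomorphism $\Phi\colon U\to U'$ with $\Phi\circ i=F$; I then restrict to $U^+$. To descend to $V(A,L)=U^+/P$ I must check $P\subseteq\ker\Phi$: on a generator $\Phi(i(az)-i(a)i(z))=F(az)-F(a)F(z)$, and for $z=b+y$ this vanishes since $F(a)F(z)=f(a)\big(f(b)+g(y)\big)=f(ab)+g(ay)=F(az)$, using that $f$ is multiplicative and condition (ii). Thus $\Phi$ factors through a superalgebra map $\phi\colon V(A,L)\to U'$, with $\phi\iota_A=f$ and $\phi\iota_L=g$ by construction. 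Uniqueness is immediate because $\iota_A(A)$ and $\iota_L(L)$ generate $V(A,L)$ as a superalgebra, so any compatible $\phi$ is determined on generators. I expect the only genuinely delicate step to be the sign bookkeeping in verifying that $F$ is a Lie superalgebra homomorphism (and, secondarily, the unit normalization $\iota_A(1_A)=1$); the rest is a formal consequence of the universal property of $U(A\rtimes L)$ and the definition of $P$.
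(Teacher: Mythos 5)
Your proposal is correct and follows essentially the same route as the paper: define the linear map $F(a+x)=f(a)+g(x)$ on $A\rtimes L$, verify it is a Lie superalgebra homomorphism by the same four-supercommutator expansion (using $|a|=|x|$ for homogeneous elements), extend via the universal property of $U(A\rtimes L)$, check that the generators of $P$ die, and conclude uniqueness from generation. Your extra verification of the unit normalization $\iota_A(1_A)=1$ in $V(A,L)=U^+/P$ is a detail the paper subsumes under ``easy to verify,'' and your argument for it is sound.
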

\begin{proof}
That the above triple satisfies property \textbf{R} is easy to verify. Let $(B,f,g)$ be another such triple. Since the underlying supermodule of $A_L\rtimes L$ is $A\oplus L$, we have a unique even linear map $h:A_L\rtimes L\rightarrow B$ that restricts to $f,g$ on $A,L$, respectively. Also, for homogeneous $a+x,b+y\in A_L\rtimes L$,
\begin{align*}
h([a+x,b+y]) &= f(x(b)-(-1)^{|a||y|}y(a))+g([x,y])\\
&= g(x)f(b)-(-1)^{|b||x|}f(b)g(x)-(-1)^{|a||y|}g(y)f(a)+f(a)g(y)\\
&+g(x)g(y)-(-1)^{|x||y|}g(y)g(x)
\end{align*}
while
\begin{align*}
h(a+x)h(b+y)-&(-1)^{|a||b|}h(b+y)h(a+x)\\ &= (f(a)+g(x))(f(b)+g(y))-(-1)^{|a||b|}(f(b)+g(y))(f(a)+g(x))\\
&= g(x)f(b)-(-1)^{|b||x|}f(b)g(x)-(-1)^{|a||y|}g(y)f(a)+f(a)g(y)\\
&+g(x)g(y)-(-1)^{|x||y|}g(y)g(x),
\end{align*}
where we use the fact that $|a|=|x|=|a+x|$ and $|b|=|y|=|b+y|$ since these elements are homogeneous. Therefore, by the universal property of $U(A_L\rtimes L)$ there is a unique superalgebra map $\phi$ making the following diagram commute:
\begin{equation*}
\xymatrixcolsep{4pc}\xymatrixrowsep{4pc}\xymatrix{
   A_L\rtimes L \ar@<2pt>[r]^{\iota} \ar@<2pt>[rd]_{h} & U(A_L\rtimes L) \ar@{-->}[d]^{\phi} \\ & B
}
\end{equation*}
In addition, the ideal $P$ is in the kernel of $\phi$ since for $a\in A$ and $z=b+x\in A_L\rtimes L$, we have
\begin{align*}
\phi(\iota(a)\iota(z)-\iota(az)) &= \phi(\iota(a))\phi(\iota(z))-\phi(\iota(az))\\
&= h(a)h(z)-h(az)\\
&= f(a)(f(b)+g(x))-h(az)\\
&= f(ab)+g(ax)-h(az)\\
&= h(a(b+x))-h(az)\\
&= 0.
\end{align*}
Therefore, there is a well-defined superalgebra map $\psi:V(A,L)\rightarrow B$ satisfying $\psi\iota_A=f,\psi\iota_L=g$, and it is easy to see this map is the unique map satisfying these conditions.
\end{proof}

The following is the Lie-Rinehart analogue of Corollary \ref{cor.Poisson_module_corr}.

\begin{corollary}
There is a 1-1 correspondence between Lie-Rinehart $(A,L)$-supermodules and $V(A,L)$-supermodules.
\end{corollary}

In contrast, the analogue of Corollary \ref{cor.m_inj} is true if and only if $A_L\rtimes L$ injects into its universal enveloping algebra. This holds in particular if $A_L\rtimes L$ is a free $S$-supermodule.

For the remainder of this section, we assume that for a Lie-Rinehart superalgebra $(A,L)$, both $A$ and $L$ are \emph{free} $S$-supermodules. For $p\geq 0$, let $V_p$ denote the left $A$-subsupermodule of $V(A,L)$ generated by products of at most $p$ elements of $L$, and let $V_{-1}:=0$. Then $\{V_p\}$ defines a filtration of $V(A,L)$ which we call the \emph{PBW filtration}. Denote by $\gr(V(A,L))$ the associated graded $A$-supermodule and note $w\stensor z-(-1)^{|w||z|}z\stensor w\in V_{p-1}$ for $w,z\in V(A,L)$ such that $w\stensor z\in V_p$, which follows from the relation $x\stensor y-(-1)^{|x||y|}y\stensor x=[x,y]$ for $x,y\in A_L\rtimes L$. In particular, this holds for $w\in A$ and $z\in V_p$ so $\gr(V(A,L))$ is in fact an $A$-superalgebra. Further, one can easily show $\gr(V(A,L))$ is supercommutative and thus there is a canonical $A$-superalgebra homomorphism $\phi:S_A(L)\rightarrow\gr(V(A,L))$ by the former's universal property; this map is easily seen to be surjective. We will prove this map is also injective and thus an $A$-superalgebra isomorphism.

To this end, denote $W:=U^+(A_L\rtimes L)$ and let $J$ be the two-sided ideal $J:=(a\stensor z-az)$ so $V:=V(A,L)=W/J$. Filtering $W$ by defining $W_p$ to be the $A$-subsupermodule generated by products of at most $p$ elements of $L$, we notice that the quotient $V=W/J$ is naturally filtered by $(W/J)_p=(W_p+J/J)$ and that this filtration coincides with the PBW filtration defined above. Hence
\begin{equation*}
\gr(V) = \bigoplus_{p=0}^\infty (W/J)_p\Big/(W/J)_{p-1}\cong\bigoplus_{p=0}^{\infty}W_p/(W_{p-1}+W_p\cap J)
\end{equation*}
with multiplication defined by $(x+T_{p-1})(y+T_{q-1})=xy+T_{p+q-1}$, where $T_p=W_{p-1}+W_p\cap J$.

One can similarly define $W':=S_R^+(A_L\rtimes L)$ and the two-sided ideal $I:=(a\stensor z-az)$ to form the quotient $V'=W'/I$. Notice that $V'=V(A,L^0)$, where $(A,L^0)$ is the Lie-Rinehart superalgebra defined in the second part of Example \ref{eg.LR}\eqref{LR1}. Further, $W'$ has an analogous filtration to $W$ with associated graded algebra
\begin{equation*}
\gr(V') = \bigoplus_{p=0}^{\infty}W'_p/(W'_{p-1}+W'_p\cap I)
\end{equation*}
having multiplication $(x'+T'_{p-1})(y'+T'_{q-1})=x'y'+T'_{p+q-1}$, where $T'_p=W'_{p-1}+W'_p\cap I$.

\begin{lemma}
\label{lem.indep of L bracket}
The $A$-superalgebras $\gr(V)$ and $\gr(V')$ are isomorphic.
\end{lemma}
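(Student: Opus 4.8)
The plan is to show that passing to the associated graded algebra destroys exactly the information that distinguishes $V = V(A,L)$ from $V' = V(A,L')$, namely the Lie bracket on $L$. The point is that $V$ and $V'$ are built from the \emph{same} underlying data---the supercommutative superalgebra $A$, the $A$-supermodule $L$, and the anchor map $\rho$---and differ only in that $W = U^+(A \rtimes L)$ uses the full bracket on $A \rtimes L$ while $W' = S^+_R(A \rtimes L')$ uses the bracket with the $L$-part set to zero. Since both $A$ and $L$ are free $S$-supermodules, I would fix homogeneous $S$-bases and compare the two constructions basis-element by basis-element through their PBW filtrations.

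First I would make the comparison at the level of $W$ and $W'$. Both are quotients of the tensor superalgebra on (a copy of) $A \rtimes L$, and in each the defining relation $xy - (-1)^{|x||y|}yx = [x,y]$ holds, where the bracket is the semidirect-product bracket (full for $W$, truncated for $W'$). The key observation is that the difference between the two brackets lives entirely in $L$: for $x,y \in L$ the $W$-relation reads $xy - (-1)^{|x||y|}yx = [x,y]_L + (\text{anchor terms})$, while in $W'$ the $[x,y]_L$ summand is absent. But $[x,y]_L \in L \subseteq W_1$, so this discrepancy is a \emph{strictly lower-filtration} term: it lies in $W_1$ when $xy$ lives in $W_2$. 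Consequently, when we reorder a product of elements of $L$ into a standard monomial (say, increasing order in the fixed basis), the choice of bracket only affects terms of strictly lower PBW degree. I would formalize this by defining an even $A$-linear map $\gr V' \to \gr V$ sending the image of a standard monomial $x_{i_1}\cdots x_{i_p}$ (with $i_1 \le \cdots \le i_p$ in the basis ordering, and odd basis elements appearing at most once) to the image of the corresponding standard monomial in $\gr V$, and showing this is a well-defined $A$-superalgebra map.

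The main step is to verify this map is a bijection, and here I would argue that both $\gr V$ and $\gr V'$ have the same ``standard monomial spanning set'' and that the reordering process is invertible modulo lower filtration. Concretely: the defining relation $a \tensor z - az$ (in $J$ and in $I$ respectively) lets us pull all scalars from $A$ to the left, so every element of $W_p$ is congruent modulo $W_{p-1}$ to an $A$-combination of length-$p$ monomials in basis elements of $L$; the commutation relations then let us sort each such monomial into standard form at the cost of strictly lower-degree terms, which vanish in the degree-$p$ piece of the associated graded. Since the anchor-map contributions $\rho(x)(a)$ also drop the $L$-degree (they land in $A \subseteq W_0$), they too disappear in $\gr$. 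Thus in both $\gr V$ and $\gr V'$ the degree-$p$ component is the free $A$-supermodule on standard monomials of length $p$, and the map identifies these bases.

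The hard part will be making the ``strictly lower filtration'' bookkeeping rigorous with the correct Koszul signs, since the super setting forces sign factors $(-1)^{|x||y|}$ at every transposition and one must check that the well-definedness computations (that $T'_p$ maps into $T_p$, and symmetrically) are compatible with these signs. In particular I must confirm that the relation $x_i x_j - (-1)^{|x_i||x_j|} x_j x_i - [x_i,x_j] \in W_{p-1}$ is respected on the nose, including the degenerate case of a repeated \emph{odd} basis element (where super antisymmetry forces $[x,x] \ne 0$ in general but the square $x^2$ is already constrained in $S_A$). Once the signed reordering is shown to induce mutually inverse graded $A$-superalgebra homomorphisms $\gr V' \rightleftarrows \gr V$, the isomorphism follows. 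I expect the cleanest route is to exhibit the map $\gr V' \to \gr V$ explicitly on standard monomials, prove it is a surjective $A$-superalgebra homomorphism by a filtration-degree induction, and then conclude injectivity by a dimension/rank comparison of the free $A$-supermodules of standard monomials in each fixed PBW degree.
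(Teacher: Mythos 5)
Your opening moves match the paper's: fix homogeneous $S$-bases (using freeness), observe via the relation $xy-(-1)^{|x||y|}yx-[x,y]$ that the bracket discrepancy between $W=U^+(A\rtimes L)$ and $W'=S_S^+(A\rtimes L)$ lies in strictly lower filtration, and sort monomials into standard form. But your argument for injectivity has a genuine gap. You assert that ``in both $\gr V$ and $\gr V'$ the degree-$p$ component is the free $A$-supermodule on standard monomials of length $p$'' and propose to finish by a rank comparison of free $A$-modules. This is false in general: the hypothesis is that $L$ is free over the base ring $S$, not over $A$, so the degree-$p$ piece of $\gr V'$, which is $S_A^p(L)$, need not be $A$-free, and the $A$-span of monomials in an $S$-basis of $L$ carries many relations (any $a\cdot z$ with $a\in A$, $z$ in the $S$-basis of $L$, rewrites inside $L$ with $S$-coefficients, since $az=a\otimes z$ modulo the ideals $I$, $J$). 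More fundamentally, the linear independence of standard monomials in $\gr V$ is precisely the hard content of the result; your sorting argument establishes only that they span (surjectivity). If instead you define your map $\gr V'\to\gr V$ via the universal property of $S_A(L)\cong\gr V'$, you simply reproduce the canonical surjection $S_A(L)\to\gr V$ of Theorem \ref{thm.LR_PBW}, whose injectivity is the very thing being proven---so the decisive step is circular.

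The paper closes this gap by importing independence from an external source: since $A\rtimes L$ is a free $S$-supermodule, the PBW theorem for Lie superalgebras gives an $S$-linear isomorphism $\psi:S_S^+(A\rtimes L)\to U^+(A\rtimes L)$ sending standard monomials to standard monomials, \emph{at the level of the full algebras $W'$ and $W$, before quotienting by $I$ and $J$}. One then checks $\psi$ is filtered and that $\psi(T_p')\subseteq T_p$ and $\psi^{-1}(T_p)\subseteq T_p'$, where $T_p=W_{p-1}+W_p\cap J$ and $T_p'=W_{p-1}'+W_p'\cap I$; this is where your lower-filtration and Koszul-sign bookkeeping rightly belongs, but its role is to transfer the ideals across $\psi$, not to prove independence. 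The induced maps $W_p'/T_p'\to W_p/T_p$ are then automatically bijective because $\psi$ already is, and equation (\ref{eqn.filtration equation}) upgrades the resulting $S$-module isomorphism to an $A$-superalgebra isomorphism. To repair your proposal, replace the rank-comparison step with an explicit appeal to super PBW for $U(A\rtimes L)$ (or an equivalent independence statement) and run the comparison over $S$ first, only afterwards passing to the $A$-structure.
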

\begin{proof}
Since $A$ and $L$ are free $S$-supermodules, there is a homogeneous $S$-basis of $A$ which we denote $\{x_i\}_{i=1}^n$, and a homogeneous $S$-basis of $L$ which we denote $\{y_j\}_{j=1}^m$. Then $A_L\rtimes L$ has an induced homogeneous $S$-basis $\{z_\ell\}_{\ell=1}^N$, where $N=n+m$, $z_i=x_i$ for $1\leq i\leq n$, and $z_j=y_{j-n}$ for $n+1\leq j\leq N$. By the PBW Theorem for Lie superalgebras, the map
\begin{align*}
\psi:\hspace{3pt} & S_S^+(A_L\rtimes L)\rightarrow U^+(A_L\rtimes L)\\
& z_1^{r_1}\stensor\cdots\stensor z_N^{r_N}\mapsto z_1^{r_1}\stensor\cdots\stensor z_N^{r_N},
\end{align*}
where $r_i\in\NN$ if $z_i$ is even and $r_i\in\{0,1\}$ if $z_i$ is odd, is an $S$-linear isomorphism. We claim $\psi$ is in fact a \emph{filtered} $S$-linear isomorphism. Indeed, first note that 
\begin{equation}\label{eq.filtration_eqn}
    w\stensor z-(-1)^{|w||z|}z\stensor w\in W_{p-1}
\end{equation}
for all $w,z\in U^+(A_L\rtimes L)$ such that $w\stensor z\in W_p$, which can be proven in the same manner as for the filtration $\{V_p\}$. Since $W'=S_R^+(A_L\rtimes L)$ is supercommutative it follows that the multiplications of $W$ and $W'$ are the same modulo lower degree terms, hence $\psi(W_p')\subseteq W_p$ as desired.

We next prove that $\psi$ induces an $S$-linear isomorphism between $\gr(V)$ and $\gr(V')$. We first show that $\psi(T_p')\subseteq T_p$ using equation \eqref{eq.filtration_eqn}. Indeed, by the above paragraph, it suffices to show an element of the form
\begin{equation*}
w=z_{i_1}\stensor\cdots\stensor z_{i_{\ell_1}}\stensor(x_j\stensor y_{j'}-x_jy_{j'})\stensor z_{i_{\ell_1+1}}\stensor\cdots\stensor z_{i_{\ell_1+\ell_2}}\in W_p'\cap I
\end{equation*}
is mapped into $T_p$. However, this is easy to see from the following process:
\begin{itemize}
    \item Permute the $z_i$ so that they are in ascending order;
    \item Apply $\psi$ to the resulting element;
    \item Undo the permutations in the first step and use equation \eqref{eq.filtration_eqn} to obtain
    \begin{equation*}
        \psi(w)=z_{i_1}\stensor\cdots\stensor z_{i_{\ell_1}}\stensor(x_j\stensor y_{j'}-x_jy_{j'})\stensor z_{i_{\ell_1+1}}\stensor\cdots\stensor z_{i_{\ell_1+\ell_2}} + \text{ lower degree terms},
    \end{equation*}
    which is clearly an element of $T_p=W_{p-1}+W_p\cap J$.
\end{itemize}
An analogous argument shows that $\psi^{-1}(T_p)\subseteq T_p'$, thus proving the restriction $\psi_p:W_p'\rightarrow W_p$ induces an $S$-linear isomorphism between $W_p'/T_p'$ and $W_p/T_p$. Thus $\gr(V)$ and $\gr(V')$ are isomorphic as $S$-modules. Once again using equation \eqref{eq.filtration_eqn}, one can easily show that this isomorphism is in fact an $A$-superalgebra isomorphism, completing the proof.
\end{proof}

We next show that the enveloping algebra of the Lie-Rinehart superalgebra $(A,L^0)$ is $S_A(L)$.

\begin{lemma}
\label{lemma.trivial_LR_UEA}
Let $(A,L)$ be a Lie-Rinehart superalgebra such that $L$ is abelian and such that anchor map $\rho$ is the zero map. Then the triple $(S_A(L),i_A,i_L)$, where $i_A:A\rightarrow S_A(L)$ and $i_L:L\rightarrow S_A(L)$ are the inclusion maps for $A$ and $L$, respectively, is universal with respect to property \textbf{R}. Consequently, $V(A,L)\cong S_A(L)$.
\end{lemma}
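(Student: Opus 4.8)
The plan is to verify directly that the triple $(S_A(L),i_A,i_L)$ satisfies property \textbf{R}, to prove it is universal with respect to property \textbf{R}, and then to conclude $V(A,L)\cong S_A(L)$ from the uniqueness of universal objects together with Proposition \ref{prop.LR_UEA_universal_prop}. The hypotheses that $L$ is abelian and that the anchor $\rho$ is zero will be used repeatedly to collapse the compatibility conditions (i) and (ii) of property \textbf{R} into statements about supercommutativity.

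First I would check that $(S_A(L),i_A,i_L)$ has property \textbf{R}. The map $i_A$ is an even superalgebra homomorphism and $i_L$ is an even $A$-supermodule homomorphism by construction of the supersymmetric algebra, so condition (ii) holds since $i_A(a)i_L(x)=a\cdot x=i_L(ax)$. Because $S_A(L)$ is supercommutative and $L$ is abelian, we have $[i_L(x),i_L(y)]_{gr}=0=i_L([x,y])$, so $i_L$ is an even Lie superalgebra homomorphism; and since $\rho=0$, condition (i) reads $i_A(x(a))=i_A(0)=0=[i_L(x),i_A(a)]_{gr}$, the last equality again from supercommutativity. Thus the triple satisfies property \textbf{R}.

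For universality, let $(B,f,g)$ be another triple with property \textbf{R}. The crucial observation is that, although $B$ need not be supercommutative, the sub-superalgebra $B'\subseteq B$ generated by $f(A)$ and $g(L)$ is. Indeed, $f(a)f(b)=f(ab)=(-1)^{|a||b|}f(ba)=(-1)^{|a||b|}f(b)f(a)$ since $A$ is supercommutative; condition (i) together with $\rho=0$ gives $[g(x),f(a)]_{gr}=f(x(a))=0$; and since $g$ is an even Lie superalgebra homomorphism with $L$ abelian, $[g(x),g(y)]_{gr}=g([x,y])=0$. Hence $f(A)$ and $g(L)$ pairwise supercommute and $B'$ is a supercommutative $A$-superalgebra via $f$. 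Now $g\colon L\to B'$ is an even $A$-supermodule homomorphism by condition (ii), so the universal property of the supersymmetric algebra $S_A(L)$ yields a unique even $A$-superalgebra homomorphism $\phi\colon S_A(L)\to B'\subseteq B$ with $\phi i_A=f$ and $\phi i_L=g$. Uniqueness of $\phi$ as a map into $B$ follows because $i_A(A)$ and $i_L(L)$ generate $S_A(L)$ as a superalgebra, so any two extensions agreeing on these generators coincide.

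The only genuine obstacle is the one just addressed: one cannot invoke the universal property of $S_A(L)$ against an arbitrary target $B$, since that universal property requires a supercommutative target. Recognizing that conditions (i) and (ii) of property \textbf{R}, under the abelian and zero-anchor hypotheses, force the relevant images to supercommute is exactly what lets one factor through the supersymmetric algebra. With universality established, $(S_A(L),i_A,i_L)$ and $(V(A,L),\iota_A,\iota_L)$ are both universal with respect to property \textbf{R}, so the standard argument for universal objects produces a unique $A$-superalgebra isomorphism $V(A,L)\cong S_A(L)$.
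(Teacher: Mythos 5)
Your proof is correct and takes essentially the same route as the paper: the paper establishes universality by defining $\phi$ on generators and checking it annihilates the supercommutator relations $xy-(-1)^{|x||y|}yx$ for $x,y\in A\cup L$, which amounts to exactly your three computations ($f(A)$ supercommutes since $A$ does, $[g(x),f(a)]_{gr}=f(x(a))=0$ from the zero anchor, and $[g(x),g(y)]_{gr}=g([x,y])=0$ from abelianness). Your repackaging—factoring through the supercommutative subalgebra $B'$ generated by $f(A)$ and $g(L)$ and then invoking the universal property of $S_A(L)$—is a slightly more explicit way of phrasing the paper's well-definedness check, but the substance is identical.
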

\begin{proof}
It is simple to verify the above triple satisfies property \textbf{R}, so it remains to show the triple is universal. Indeed, let $(B,f,g)$ be another triple satisfying property \textbf{R}. We need to show there is a unique $S$-superalgebra homomorphism $\phi:S_A(L)\rightarrow B$ such that $\phi i_A=f$ and $\phi i_l=g$. Uniqueness of $\phi$ is clear as $\phi(a)=f(a)$ and $\phi(x)=g(x)$ for $a\in A,x\in L$. To show $\phi$ is well-defined, it suffices to show $\phi(xy-(-1)^{|x||y|}yx)=0$ for $x,y\in A\cup L$. We show this for $x\in L$ and $y\in A$ and the leave other cases to the reader. Indeed,
\begin{align*}
\phi(xy-(-1)^{|x||y|}yx) & = \phi(x)\phi(y)-(-1)^{|x||y|}\phi(y)\phi(x)\\
& = g(x)f(y)-(-1)^{|x||y|}f(y)g(x)\\
& = f(x(y))\\
& = 0
\end{align*}
since the anchor map is zero.
\end{proof}

\begin{theorem}[Lie-Rinehart PBW Theorem]
\label{thm.LR_PBW}
The canonical $A$-superalgebra surjection $\phi:S_A(L)\rightarrow \gr(V(A,L))$ is an isomorphism.
\end{theorem}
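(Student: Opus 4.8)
The plan is to deduce the theorem from the two preceding lemmas, in which the statement collapses to an essentially tautological computation for the trivial-bracket case. Since the text has already established that $\phi$ is a \emph{surjective} $A$-superalgebra homomorphism, the entire task is to produce injectivity, and I would accomplish this not by a direct kernel computation but by exhibiting $\phi$ as a composite of two isomorphisms coming from Lemma \ref{lem.indep of L bracket} and Lemma \ref{lemma.trivial_LR_UEA}.

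First I would analyze the trivial Lie-Rinehart superalgebra $(A,L')$ from the second part of Example \ref{ex.LR} \eqref{LR1}, where $L'$ shares the underlying $A$-supermodule of $L$ but carries the zero bracket and zero anchor. By Lemma \ref{lemma.trivial_LR_UEA} there is an $A$-superalgebra isomorphism $V' = V(A,L') \cong S_A(L')$ identifying $\iota_L(x)$ with $x \in L \subseteq S_A(L')$. I would then check that this isomorphism is \emph{filtered}: the PBW filtration $V'_p$, generated by products of at most $p$ elements of $L$, corresponds exactly to the degree filtration $\bigoplus_{k \leq p} S_A^k(L')$. Because $S_A(L')$ is already graded by symmetric degree, its associated graded with respect to this filtration is $S_A(L')$ itself, so the canonical surjection $\phi' : S_A(L') \to \gr V'$ is an isomorphism. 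Crucially, $S_A(L')$ and $S_A(L)$ coincide as $A$-superalgebras, the symmetric algebra depending only on the $A$-supermodule structure and not on the Lie bracket.

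Next I would invoke Lemma \ref{lem.indep of L bracket}, which supplies an $A$-superalgebra isomorphism $\bar\psi : \gr V' \xrightarrow{\sim} \gr V$ induced by the map $\psi$ that carries an ordered monomial in the basis $\{z_k\}$ to the identical ordered monomial. Making the identification $S_A(L) = S_A(L')$, both canonical maps $\phi$ and $\phi'$ send a symmetric product $y_{j_1}\cdots y_{j_p}$ to the class of $y_{j_1}\tensor\cdots\tensor y_{j_p}$ in degree $p$ of the respective associated graded algebra, and $\bar\psi$ transports the class in $\gr V'$ to the corresponding class in $\gr V$; hence $\bar\psi \circ \phi' = \phi$. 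As $\phi'$ and $\bar\psi$ are both isomorphisms, so is $\phi$. The one delicate point—and the only place where care is genuinely required—is this final compatibility check: one must confirm that the abstract isomorphisms furnished by the two lemmas actually \emph{intertwine} the two canonical surjections under the identification $S_A(L) = S_A(L')$, rather than merely witnessing an abstract isomorphism of the graded objects. Everything else reduces to bookkeeping, and the freeness hypotheses on $A$ and $L$ over $S$ are consumed entirely inside the two lemmas, so no additional assumptions enter here.
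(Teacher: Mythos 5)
Your proposal is correct and follows essentially the same route as the paper's own proof: both factor the canonical surjection $\phi$ through the trivial-bracket case via Lemma~\ref{lemma.trivial_LR_UEA} (giving $S_A(L)\cong\gr V'$ as a graded isomorphism) and then transport across Lemma~\ref{lem.indep of L bracket} to obtain $\gr V'\cong\gr V$, checking that the composite agrees with $\phi$. The only difference is emphasis: where the paper dismisses the final compatibility $\bar\psi\circ\phi'=\phi$ with ``clearly,'' you rightly isolate it as the one step requiring verification on ordered monomials, which is a welcome precision rather than a deviation.
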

\begin{proof}
By the above lemma, the map $\phi':S_A(L)\rightarrow V'$ sending $a\mapsto a$ and $x\mapsto x$, for $a\in A$ and $x\in L$, is an $S$-superalgebra isomorphism. Further, it is clear that $\phi'$ is in fact a filtered $A$-superalgebra isomorphism. Therefore, as $A$-superalgebras
\begin{equation*}
S_A(L)\cong\gr(V')\cong \gr(V(A,L))
\end{equation*}
with the latter isomorphism given by Lemma \ref{lem.indep of L bracket}. Clearly this composition is exactly the map specified in the statement of the theorem, so we are done.
\end{proof}

We end this section by briefly showing Theorem \ref{thm.LR_PBW} is independent of \cite[Theorem 3.1]{Rinehart_PBW}. Indeed, note that the above theorem does not apply to a Lie-Rinehart pair $(S,L)$ where $L$ is any projective, non-free Lie algebra over $S$. On the other hand, it \emph{always} applies whenever the base ring $S$ is a field, which we will use in the next section to prove the PBW Theorem holds for all Poisson superalgebras (over a field).

\section{The PBW Theorem for Poisson Superalgebras}\label{sec.Poisson_PBW}
In \cite{huebschmann_Poisson/LR}, it was shown that one can naturally view a Poisson algebra as a Lie-Rinehart algebra in such a way so that the enveloping algebra in the Poisson sense is isomorphic the the enveloping algebra in the Lie-Rinehart sense. Later, the authors of \cite{LWZ_PoissonHopf} used this viewpoint to prove a PBW theorem for Poisson algebras (\cite[Corollary 5.8]{LWZ_PoissonHopf}) that follows immediately from Rinehart's PBW theorem for Lie-Rinehart algebras (\cite[Theorem 3.1]{Rinehart_PBW}). We will use the same technique in this section, albeit with Theorem \ref{thm.LR_PBW}, to prove the PBW Theorem holds for all Poisson superalgebras over a field.

We first need to recall the construction of the \emph{even K\"{a}hler superdifferentials} over a superalgebra $A$; one can define the \emph{odd K\"{a}hler superdifferentials} similarly, though we will not need it in this paper.

Let $S$ be the free $A$-supermodule generated by the set $\{\d a \mid a\in A, \text{$a$ homogeneous}\}$ with grading $|\d a|=|a|$. Let $\Omega_A^{\ev}$ be the quotient of $S$ by the relations $\d(k a+\ell b)=k\d a+\ell\d b$ and $\d(ab)=a\d b+(-1)^{|a||b|}b\d a$, where $k,\ell\in \kk, a,b\in A$. Then the supermodule of even K\"{a}hler superdifferentials (also simply called the even K\"ahler superdifferentials) is the pair $(\Omega_A^{ev},d_{ev})$, where $d_{ev}$ is the even superderivation $d_{ev}(a):=\d a$.

Analogously to how the K\"{a}hler differentials over an algebra $A$ can be understood as a universal derivation, the even K\"{a}hler superdifferentials can be understood as a universal even superderivation.

\begin{proposition}[{\cite[Lemma 3.1]{MZ_Krull}, \cite[Chapter 3, \S 1.8]{Manin_Supermath}}]
\label{prop.Kahler_Differential}
Given any $A$-supermodule $M$, composition with $d_{\ev}$ gives a graded isomorphism (of abelian groups)
\begin{equation*}
\mathrm{Hom}_A(\Omega_A^{\ev},M)\cong\mathrm{Der}_\kk(A,M).
\end{equation*}
\end{proposition}

The following construction, analogous to the one in \cite{huebschmann_Poisson/LR}, allows us to view a Poisson superalgebra as a Lie-Rinehart superalgebra (c.f. Example \ref{eg.LR}\eqref{LR3}).

\begin{example}[{\cite[Theorem 3.8]{huebschmann_Poisson/LR}}]
Let $R$ be a Poisson superalgebra over $\kk$. Then the pair $(R,\Omega_R^{\ev})$ has the structure of a Lie-Rinehart superalgebra when $\Omega_R^{\ev}$ is given the bracket
\begin{equation*}
[x\d y,z\d w] := (-1)^{|y||z|}xz\d\{y,w\}+x\{y,z\}\d w-(-1)^{|xy||zw|}z\{w,x\}\d y,
\end{equation*}
for $x,y,z,w\in R$, with anchor map $\rho(\d z):=\{z,-\}$.
\end{example}

Notice that from the natural inclusions $\iota_R,\iota_{\Omega_R^{\ev}}$ of $R,\Omega_R^{\ev}$ in $V(R,\Omega_R^{\ev})$, respectively, we can define two natural maps from $R$ to $V(R,\Omega_R^{\ev})$: $\alpha:=\iota_R$ and $\beta:=\iota_{\Omega_R^{\ev}}\circ d_{ev}$. The following proposition shows the resulting triple is the Poisson enveloping algebra of $R$.

\begin{proposition}
\label{prop.Poisson_UEA=LR_UEA}
The triple $(V(R,\Omega_R^{\ev}),\alpha,\beta)$ is universal with respect to property \textbf{P}. In particular, there is a canonical superalgebra isomorphism $\Lambda:V(R,\Omega_R^{\ev})\rightarrow U(R)$.
\end{proposition}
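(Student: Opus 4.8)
The plan is to verify directly that the triple $(V(R,\Omega_R^{\ev}),\alpha,\beta)$ satisfies property \textbf{P}, and then to produce mutually inverse superalgebra homomorphisms between $V(R,\Omega_R^{\ev})$ and $U(R)$ using the two universal properties (property \textbf{R} for $V$, via Proposition \ref{prop.LR_UEA_universal_prop}, and property \textbf{P} for $U(R)$). Once both triples are shown to be universal with respect to property \textbf{P}, the uniqueness of universal objects forces a unique isomorphism, exactly as in the uniqueness argument concluding the proof of Theorem \ref{thm.enveloping algebra}.

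First I would check the three structural conditions: $\alpha$ is an even superalgebra homomorphism (it is the composite of the inclusion $R\hookrightarrow R\rtimes\Omega_R^{\ev}$ with $\iota_R$, and is multiplicative because in $V(R,\Omega_R^{\ev})$ the relation $\iota_A(a)\iota_A(b)=\iota_A(ab)$ holds), and $\beta=\iota_{\Omega}\circ d_{\ev}$ is even and $k$-linear. The heart of the matter is verifying axioms (i) and (ii) of property \textbf{P}. For (i), I would compute, using condition (i) of property \textbf{R} for the Lie-Rinehart enveloping algebra,
\begin{equation*}
[\beta(x),\alpha(y)]_{gr}=[\iota_\Omega(\d x),\iota_R(y)]_{gr}=\iota_R\big(\rho(\d x)(y)\big)=\iota_R(\{x,y\})=\alpha(\{x,y\}),
\end{equation*}
which uses precisely the anchor map $\rho(\d f)=\{f,-\}$. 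For (ii), the Leibniz-type identity $\beta(xy)=\alpha(x)\beta(y)+(-1)^{|x||y|}\alpha(y)\beta(x)$ should follow from the superderivation property $d_{\ev}(xy)=x\,\d y+(-1)^{|x||y|}y\,\d x$ together with condition (ii) of property \textbf{R}, namely $\iota_\Omega(a\cdot\omega)=\iota_R(a)\iota_\Omega(\omega)$; explicitly $\beta(xy)=\iota_\Omega(x\,\d y)+(-1)^{|x||y|}\iota_\Omega(y\,\d x)=\alpha(x)\beta(y)+(-1)^{|x||y|}\alpha(y)\beta(x)$.

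The main obstacle, and the step requiring genuine care, is the converse direction: showing that $V(R,\Omega_R^{\ev})$ is \emph{universal}, not merely an example satisfying property \textbf{P}. Given any triple $(B,\gamma,\delta)$ with property \textbf{P}, I must build a map $V(R,\Omega_R^{\ev})\to B$, and the only available tool is the universal property \textbf{R} of $V$. This means I must manufacture from $(\gamma,\delta)$ a triple satisfying property \textbf{R} for the Lie-Rinehart pair $(R,\Omega_R^{\ev})$: take $f=\gamma:R\to B$, and define $g:\Omega_R^{\ev}\to B$ on generators by $g(\d h)=\delta(h)$, extended $A$-linearly via $g(a\,\d h)=\gamma(a)\delta(h)$. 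The delicate point is that $g$ must be \emph{well-defined} on $\Omega_R^{\ev}$, i.e. it must respect the defining relations $\d(fg)=f\,\d g+(-1)^{|f||g|}g\,\d f$; this is exactly where property \textbf{P} axiom (ii) for $(B,\gamma,\delta)$ is needed, since it guarantees $\delta(fg)=\gamma(f)\delta(g)+(-1)^{|f||g|}\gamma(g)\delta(f)$ matches the image of the right-hand side. One must then confirm that $(B,f,g)$ satisfies property \textbf{R}: condition (ii) $g(a\omega)=f(a)g(\omega)$ holds by the very definition of $g$, while condition (i) $f(\rho(\d x)(a))=[g(\d x),f(a)]_{gr}$ reduces to $\gamma(\{x,a\})=[\delta(x),\gamma(a)]_{gr}$, which is property \textbf{P} axiom (i). Proposition \ref{prop.LR_UEA_universal_prop} then yields a unique superalgebra homomorphism $\phi:V(R,\Omega_R^{\ev})\to B$ with $\phi\,\iota_R=\gamma$ and $\phi\,\iota_\Omega=g$, whence $\phi\alpha=\gamma$ and $\phi\beta=\delta$. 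Uniqueness of $\phi$ as a map of property-\textbf{P} triples follows because $\alpha,\beta$ generate $V(R,\Omega_R^{\ev})$ as a superalgebra (the generators of $\Omega_R^{\ev}$ are the $\d h$, hit by $\beta$, and the scalars $a$ by $\alpha$), so any two completing homomorphisms agree on generators. Having established that both $U(R)$ and $V(R,\Omega_R^{\ev})$ are universal for property \textbf{P}, the standard uniqueness-of-universal-objects argument produces the asserted unique superalgebra isomorphism.
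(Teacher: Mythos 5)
Your proposal is correct and follows essentially the same route as the paper: verify property \textbf{P} directly, then factor $\delta$ through $\Omega_R^{\ev}$ to obtain a property-\textbf{R} triple $(B,\gamma,g)$ and invoke Proposition \ref{prop.LR_UEA_universal_prop}, concluding by uniqueness of universal objects; your hands-on well-definedness check for $g$ is precisely the content of Proposition \ref{prop.Kahler_Differential}, which the paper cites instead of re-deriving. The only steps you gloss over are the verifications that $\beta$ and $g$ are Lie superalgebra homomorphisms (structural condition (3) of properties \textbf{P} and \textbf{R}, using the bracket $[x\,\d f,y\,\d g]$ on $\Omega_R^{\ev}$) and that condition (i) of property \textbf{R} holds for general elements $a\,\d h$ rather than just the generators $\d h$, but these are routine computations using supercommutativity of $\gamma(R)$ that the paper likewise dispatches quickly.
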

\begin{proof}
We first need to show the triple satisfies property \textbf{P}. That $\alpha$ and $\beta$ are even and that $\alpha$ is a superalgebra homomorphism is trivial. Further, for homogeneous $x,y\in R$, 
\begin{align*}
&\beta(\{x,y\})=\d\{x,y\}=[\d x,\d y]_{R\rtimes \Omega_R^{ev}}=\beta(x)\beta(y)-(-1)^{|x||y|}\beta(y)\beta(x),\\
&\alpha(\{x,y\})=\{x,y\}=\rho(\d x)(y)=[\d x,y]_{V(R,\Omega_R^{ev})}=\beta(x)\alpha(y)-(-1)^{|x||y|}\alpha(y)\beta(x),\\
&\beta(xy)=\d(xy)=x\d y+(-1)^{|x||y|}y\d x=\alpha(x)\beta(y)+(-1)^{|x||y|}\alpha(y)\beta(x),
\end{align*}
so that the triple satisfies property \textbf{P}.

We now show $(V(R,\Omega_R^{ev}),\alpha,\beta)$ is universal with respect to property \textbf{P}. Indeed, suppose $(B,\gamma,\delta)$ is another triple satisfying property \textbf{P}. Note that $B$ is an $R$-supermodule via the action $x\cdot b=\gamma(x)b$ for $x\in R, b\in B$. Now, since
\begin{equation*}
    \delta(xy)=\gamma(x)\delta(y)+(-1)^{|x||y|}\gamma(y)\delta(x)=x\cdot\delta(y)+(-1)^{|x||y|}y\cdot\delta(x)
\end{equation*}
we have that $\delta:R\rightarrow B$ is an even $\kk$-superderivation. Thus by Proposition \ref{prop.Kahler_Differential}, there is a unique even $R$-supermodule homomorphism $\theta:\Omega_R^{ev}\rightarrow B$ such that $\delta=\theta d_{ev}$. Next, notice that the triple $(B,\gamma,\theta)$ satisfies property \textbf{R} (with respect to $(R,\Omega_R^{\ev}))$ since $\theta(x\d z)=\gamma(x)\delta(z)$ by definition, while
\begin{align*}
\gamma((x\d y)(z)) &= \gamma(x\{y,z\})\\
&= \gamma(x)\delta(y)\gamma(z)-(-1)^{|z||y|}\gamma(x)\gamma(z)\delta(y)\\
&= \gamma(x)\theta(\d y)\gamma(z)-(-1)^{|z||xy|}\gamma(z)\gamma(x)\theta(\d y)\\
&= \theta(x\d y)\gamma(z)-(-1)^{|z||xy|}\gamma(z)\theta(x\d y)
\end{align*}
with the condition that $\theta$ is a Lie superalgebra homomorphism being similarly easy to verify. Therefore, by the universal property of $V(R,\Omega_R^{ev})$, there is a unique superalgebra map $\lambda:V(R,\Omega_R^{ev})\rightarrow B$ such that $\lambda\alpha=\gamma$ and $\lambda\beta=\lambda\iota_{\Omega_R^{ev}}d_{ev}=\theta d_{ev}=\delta$. I.e., $(V(R,\Omega_R^{ev}),\alpha,\beta)$ is universal with respect to property \textbf{P}.

Finally, that there is a canonical superalgebra isomorphism between $V(R,\Omega_R^{ev})$ and $U(R)$ follows immediately from Theorem \ref{thm.enveloping algebra}.
\end{proof}

\begin{corollary}[Poisson PBW Theorem]
\label{cor.Poisson_PBW}
Let $R$ be a Poisson superalgebra, let $U(R)$ be its enveloping algebra, let $\Lambda$ be the canonical isomorphism obtained in the above proposition, and consider the filtration $\{F_n\}$ on $U(R)$ induced by the isomorphism $\Lambda$ and the PBW filtration on $V(R,\Omega_R^{\ev})$; we will also call this filtration the \emph{PBW filtration} (on $U(R)$). In particular, $m_x\in F_0$ and $h_x\in F_1$ for all $x\in R$. Then $\gr(U(R))$ is a supercommutative $R$-superalgebra, and the canonical superalgebra homomorphism from $S_R(\Omega_R^{\ev})$ to $\gr(U(R))$ is an $R$-superalgebra isomorphism.
\end{corollary}
\begin{proof}
That $\gr(U(R))$ is supercommutative follows from the fact $\gr(V(R,\Omega_R^{\ev}))$ is, while $S_R(\Omega_R^{\ev})\cong\gr(U(R))$ via the canonical homomorphism follows immediately from Theorem \ref{thm.LR_PBW}.
\end{proof}

\begin{corollary}\label{cor.U(R)_fg_Noeth_if_R_is}
Let $R$ be a Poisson superalgebra with enveloping algebra $U(R)$. If $R$ is finitely generated as a superalgebra, then $U(R)$ is left/right Noetherian.
\end{corollary}
\begin{proof}
Suppose $R$ is generated by the even variables $x_1,\ldots,x_n$ and the odd variables $y_1,\ldots,y_m$. Then $U(R)$ is generated by $m(R)$ and $h(R)$ and since
\begin{align*}
m(xy) &= m(x)m(y),\\
h(xy) &= m(x)h(y)+(-1)^{|x||y|}m(y)h(x),
\end{align*}
for $x,y\in R$, it follows $U(R)$ is generated by $m(x_i),m(y_j),h(x_i),h(y_j)$, for $1\leq i\leq n$, $1\leq j\leq m$, as a superalgebra.

Now, filter $U(R)$ via the PBW filtration and consider the associated graded superalgebra $\gr(U(R))$. Since $\gr(U(R))$ is supercommutative, it is the homomorphic image of a polynomial superalgebra $R[X_1,\cdots,X_n \mid Y_1,\cdots,Y_m]$ by the above paragraph. Since $R$ is finitely generated over $\kk$, it follows that $R[X_1,\cdots,X_n \mid Y_1,\cdots,Y_m]$, and hence $\gr(U(R))$, is left/right Noetherian by \cite[Lemma 1.4]{MZ_Krull}. By \cite[Theorem 1.6.9]{MR_NoetherianRings}, $U(R)$ is left/right Noetherian.
\end{proof}

In the case where $\Omega_R^{ev}$ is a free $R$-supermodule, we can use the above corollary to determine a basis of $U(R)$ as the following example demonstrates.

\begin{example}
\label{eg.symm_alg_UEA_basis}
Consider the polynomial superalgebra $R=\kk[x_1,\ldots,x_n \mid y_1,\ldots,y_m]$ on even generators $x_i$ and odd generators $y_j$, and endow $R$ with a Poisson bracket. Then $\Omega_R^{\ev}$ is the free $R$-supermodule generated by $\d x_1,\ldots,\d x_n,\d y_1,\ldots,\d y_m$ (see e.g. \cite[Remark 3.3]{MZ_Krull}). Therefore, by Corollary \ref{cor.Poisson_PBW}, $U(R)$ has an $R$-basis consisting of elements of the form
\begin{equation*}
h_{x_1}^{s_1}\cdots h_{x_n}^{s_n}h_{y_1}^{s_{n+1}}\cdots h_{y_m}^{s_{n+m}},
\end{equation*}
where $s_i$ is a nonnegative integer for $1\leq i\leq n$ and $s_j\in\{0,1\}$ for $n+1\leq j\leq n+m$. Thus $U(R)$ has a $\kk$-basis consisting of elements of the form
\begin{equation*}
m_{x_1}^{r_1}\cdots m_{x_n}^{r_n}m_{y_1}^{r_{n+1}}\cdots m_{y_m}^{r_{n+m}}
h_{x_1}^{s_1}\cdots h_{x_n}^{s_n}h_{y_1}^{s_{n+1}}\cdots h_{y_m}^{s_{n+m}},
\end{equation*}
where $r_i,s_i$ are nonnegative integers for $1\leq i\leq n$ and $r_j,s_j\in\{0,1\}$ for $n+1\leq j\leq n+m$. 

More generally, let $V$ be a super vector space with totally ordered homogeneous basis $\{x_i\}$, and endow the supersymmetric superalgebra $S(V)$ with a Poisson bracket. Then a similar argument to the above paragraph shows that $U(S(V))$ has a $\kk$-basis consisting of elements of the form
\begin{equation*}
m_{x_{i_1}}^{r_1}\cdots m_{x_{i_n}}^{r_n}h_{x_{j_1}}^{s_1}\cdots h_{x_{j_m}}^{s_m}
\end{equation*}
for all nonnegative $n,m$, where the $r_i,s_i$ are nonnegative (resp. 0 or 1) for $x_i$ even (resp. $x_i$ odd), where $x_{i_1}<\ldots<x_{i_n}$, and where $x_{j_1}<\ldots<x_{j_m}$.
\end{example}

\section{Enveloping algebras of Poisson Hopf superalgebras}\label{sec.Poisson_Hopf}
Poisson Hopf algebras were first introduced by Drinfeld in 1985 (\cite[Definition 1.1]{Drinfeld_PH}) and have since been studied in relation to homological algebra and deformation quantization, see e.g. \cite{BZ_PH,LW_PH,LWW_PH}. The universal enveloping algebras of Poisson Hopf algebras have been studied as well, for example in \cite{LWZ_PoissonHopf,Oh_PoissonHopf}. In particular, Oh showed in \cite{Oh_PoissonHopf} that the enveloping algebra of a Poisson Hopf algebra is itself a Hopf algebra. In this section we will prove an analogous result holds for Poisson superalgebras. We also show the Poisson symplectic superalgebra $PS(L)$ (i.e. with trivial cocycle) is a Hopf superalgebra, and describe the Hopf structure of its enveloping algebra.

Throughout this section, we will use sumless Sweedler notation for the comultiplication of a (super)coalgebra. We start with some definitions.

\begin{definition}[\cite{AM_Hopf_superalgebras}]
A \emph{supercoalgebra} is a $\ZZ_2$-graded coalgebra $C=C_0\oplus C_1$ such that $\Delta$ and $\ep$ preserve the grading: $\Delta(C_i)\subseteq \sum_jC_j\stensor C_{i-j}$, $\ep(C_0)\subseteq \kk$, and $\ep(C_1)=0$; the condition on $\ep$ follows from the fact fields are entirely even.

A \emph{superbialgebra} $B$ is a superalgebra and supercoalgebra, with respect to the same grading, such that $\Delta$ and $\ep$ are (even) superalgebra homomorphisms.

A \emph{Hopf superalgebra} is a superbialgebra $H$ admitting an antipode, which is an even linear map $S$ satisfying
\begin{equation*}
    \mu\circ(S\stensor\id_H)\circ\Delta=\mu\circ(\id_H\stensor S)\circ\Delta=\eta\circ\ep
\end{equation*}
where $\eta:\kk\rightarrow H$ is the unit map.
\end{definition}

\begin{definition}
\label{def.PHSuperalg}
A Poisson superalgebra $R$ is said to be a \emph{Poisson Hopf superalgebra} if $R$ is also a Hopf superalgebra $(R,\eta,\del,\ep,\Delta,S)$ such that $\Delta$ is a super Poisson homomorphism. That is,
\begin{equation*}
\Delta(\{x,y\})=\{\Delta(x),\Delta(y)\}_{R\stensor R}
\end{equation*}
for all $x,y\in R$.
\end{definition}

Note that the above definition only provides a compatibility condition between the comultiplication and the Poisson structure. The following lemma shows this is all that is necessary.

\begin{lemma}
\label{lemma.Poisson(anti)hom}
If $(R,\eta,\del,\ep,\Delta,S)$ is a Poisson Hopf superalgebra, then the counit $\ep$ is a super Poisson homomorphism and the antipode $S$ is a super Poisson anti-automorphism.
\end{lemma}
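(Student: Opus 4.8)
The plan is to prove both claims directly from the defining axioms of a Poisson Hopf superalgebra, exploiting the fact that the Poisson compatibility condition $\Delta(\{a,b\})=\{\Delta(a),\Delta(b)\}_{A\stensor A}$ already encodes everything via the Hopf axioms. For the counit, I would apply $\ep\stensor\ep$ (or equivalently use the counit axiom $(\ep\stensor\id)\Delta=\id=(\id\stensor\ep)\Delta$) to the compatibility condition. Writing $\Delta(a)=a_{(1)}\stensor a_{(2)}$ in sumless Sweedler notation, the right-hand side $\{\Delta(a),\Delta(b)\}_{A\stensor A}=(-1)^{|a_{(2)}||b_{(1)}|}(\{a_{(1)},b_{(1)}\}\stensor a_{(2)}b_{(2)}+a_{(1)}b_{(1)}\stensor\{a_{(2)},b_{(2)}\})$ should collapse, after applying $\ep$ on both legs and using $\ep(\{x,y\})$ terms together with $\ep(a_{(1)})a_{(2)}=a$-type identities, to give $\ep(\{a,b\})=\{\ep(a),\ep(b)\}=0$ (since $k$ carries the trivial bracket). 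I expect this to be the easier of the two assertions.

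Next I would establish that $S$ is a super Poisson anti-automorphism, which is the substantive part. First I would note $S$ is bijective (as the antipode of a Hopf superalgebra, being an even map; if needed one invokes that $S$ is an anti-homomorphism of superalgebras, which for Hopf superalgebras follows from the standard convolution-inverse argument adapted with Koszul signs). The strategy is to show that the two maps $a\stensor b\mapsto S(\{a,b\})$ and $a\stensor b\mapsto (-1)^{|a||b|}\{S(b),S(a)\}$ agree. I would set this up as a convolution-algebra argument: one shows that the bracket-twisted-by-$S$ map is a convolution inverse to the ordinary Poisson bracket composed with $\Delta$, inside the convolution algebra $\operatorname{Hom}(A\stensor A, A)$. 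Concretely, using the compatibility condition to move $\Delta$ past the bracket, and then applying $\mu\circ(S\stensor\id)\circ\Delta=\eta\ep$ together with the counit result from the first part, the terms should telescope so that $S(\{a,b\})$ is forced to equal $(-1)^{|a||b|}\{S(b),S(a)\}$.

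The main obstacle I anticipate is bookkeeping of the Koszul signs through the Sweedler-notation manipulations, particularly the sign $(-1)^{|a'||b|}$ built into the bracket on $A\stensor A$ interacting with the sign picked up when $S$ (an even map) is applied inside products and when the anti-homomorphism property $S(xy)=(-1)^{|x||y|}S(y)S(x)$ is invoked. One must carefully track the parities $|a_{(1)}|,|a_{(2)}|$ with $|a_{(1)}|+|a_{(2)}|=|a|$ and similarly for $b$, and verify that all accumulated signs reduce to the single $(-1)^{|a||b|}$ appearing in the anti-homomorphism identity. The cleanest route is probably to verify the identity first on homogeneous generators and then extend by the derivation/anti-derivation properties, or alternatively to phrase the whole computation in the convolution algebra so that the antipode axiom does the telescoping automatically; I would choose whichever keeps the sign analysis most transparent, and I expect the convolution-algebra formulation to minimize the sign-chasing since it replaces explicit cancellation with the single identity $\mu\circ(S\stensor\id)\circ\Delta=\eta\circ\ep$.
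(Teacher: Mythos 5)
Your counit argument is essentially the paper's: one applies $\ep$ to both legs of the compatibility condition $\Delta(\{a,b\})=\{\Delta(a),\Delta(b)\}_{A\stensor A}$. One remark: the computation does not directly ``collapse'' to zero. Using $a_1\ep(a_2)=a$ and $\ep(a_1)a_2=a$ (together with $\ep$ vanishing on odd elements, which kills the Koszul sign), \emph{each} of the two terms on the right-hand side reproduces $\ep(\{a,b\})$, so what you actually get is $\ep(\{a,b\})=2\ep(\{a,b\})$, whence $\ep(\{a,b\})=0$. You would discover this in execution; it is not a gap.

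The antipode part, as literally planned, has a genuine flaw: the bracket $P=\{\cdot,\cdot\}:A\stensor A\to A$ is \emph{not} convolution invertible in $\mathrm{Hom}(A\stensor A,A)$, so ``the bracket-twisted-by-$S$ map is a convolution inverse to the Poisson bracket'' cannot be made to work. Indeed, if $P*G=\eta\ep$ held, applying $\ep$ would give $\ep(P(a_1\stensor b_1))\,\ep(G(a_2\stensor b_2))=\ep(a)\ep(b)$, contradicting $\ep\circ P=0$ from the first part. What the antipode axiom applied to $x=\{a,b\}$ actually yields, via the compatibility condition, is a mixed \emph{derivation-type} identity, $(S\circ P)*\mu+(S\circ\mu)*P=0$ (elementwise: $S(\{a_1,b_1\})a_2b_2=-S(a_1b_1)\{a_2,b_2\}$ up to Koszul signs), and one then convolves with the inverse of $\mu$, which is $S\circ\mu$. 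This is exactly the paper's computation in convolution clothing. Moreover, the resulting expressions of the form $S(a_1)S(b_1)\{a_2,b_2\}S(a_3)S(b_3)$ do not reduce to $-\{S(a),S(b)\}$ by the antipode axiom alone: you additionally need the identities obtained by applying the Leibniz rule to $0=\{\ep(a),b\}=\{S(a_1)a_2,b\}$, namely $S(a_1)\{a_2,b\}=-(-1)^{|a_2||b|}\{S(a_1),b\}a_2$ and its mirror image in the second argument — precisely the relations the paper derives first, and which your sketch omits. Two further corrections: bijectivity of $S$ does not follow from its merely being an antipode (antipodes of Hopf superalgebras need not be bijective); the paper gets it from supercommutativity of $A$, which also upgrades $S$ from an anti-homomorphism to a genuine superalgebra homomorphism, a fact the computation uses. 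Finally, your fallback of ``verify on generators, then extend'' is not self-contained: while the identity $S(\{a,b\})=-\{S(a),S(b)\}$ is indeed multiplicative in each slot, a general Poisson Hopf superalgebra has no distinguished generating set on which it can be checked without running the full Sweedler argument anyway.
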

\begin{proof}
We first show $\ep$ is a super Poisson homomorphism. Indeed, since the bracket on $\kk$ is trivial this is true if and only if $\ep(\{x,y\})=0$ for all $x,y\in R$. Since $\ep$ is the counit, we have $\ep(x_1)\ep(x_2)=\ep(x)$ and thus
\begin{align*}
\ep(\{x,y\}) &= (-1)^{|y_1||x_2|}(\ep(\{x_1,y_1\})\ep(x_2y_2)+\ep(x_1y_1)\ep(\{x_2,y_2\}))\\
&= (-1)^{|y_1||x_2|}(\ep(\{x_1\ep(x_2),y_1\ep(y_2)\})+\ep(\{\ep(x_1)x_2,\ep(y_1)y_2\}))\\
&= (-1)^{|y_1||x_2|}(\ep(\{x,y\})+\ep(\{x,y\})),
\end{align*}
from which the result follows.

Consider now the antipode $S$. That $S$ is a superalgebra homomorphism follows from the fact that Poisson superalgebras are supercommutative and the discussion after Definition 1.5 of \cite{AM_Hopf_superalgebras}, which shows $S$ is a superalgebra anti-homomorphism; this discussion also shows $S$ is bijective. Now, from the equalities
\begin{align*}
&0=\{\ep(x),y\}=\{S(x_1)x_2,y\}=S(x_1)\{x_2,y\}+(-1)^{|x_2||y|}\{S(x_1),y\}x_2,\\
&0=\{x,\ep(y)\}=\{x,S(y_1)y_2\}=\{x,S(y_1)\}y_2+(-1)^{|x||y_1|}S(y_1)\{x,y_2\},
\end{align*}
we obtain
\begin{equation*}
S(x_1)\{x_2,y\}=-(-1)^{|x_2||y|}\{S(x_1),y\}x_2
\end{equation*}
and
\begin{equation*}
\{x,S(y_1)\}y_2=-(-1)^{|x||y_1|}S(y_1)\{x,y_2\}.
\end{equation*}
Also, by the first paragraph
\begin{equation*}
0=\ep(\{x,y\})=S(\{x,y\}_1)\{x,y\}_2=(-1)^{|y_1||x_2|}(S(\{x_1,y_1\})x_2y_2+S(x_1y_1)\{x_2,y_2\}),
\end{equation*}
from which we get
\begin{align*}
S(\{x_1,y_1\})x_2y_2 &= -S(x_1y_1)\{x_2,y_2\}\\
&= (-1)^{|x_1||y_1|}(-1)^{|x_2||y_2|}S(y_1)\{S(x_1),y_2\}x_2\\
&= -\{S(x_1),S(y_1)\}x_2y_2.
\end{align*}
using the relations just derived. Finally, we have
\begin{align*}
S(\{x,y\}) &= S(\{x_1\ep(x_2),y_1\ep(y_2)\})\\
&= S(\{x_1,y_1\})\ep(x_2)\ep(y_2)\\
&= S(\{x_1,y_1\})x_2S(x_3)y_2S(y_3)\\
&= (-1)^{|x_3||y_2|}S(\{x_1,y_1\})x_2y_2S(x_3)S(y_3)\\
&= -(-1)^{|x_3||y_2|}\{S(x_1),S(y_1)\}x_2y_2S(x_3)S(y_3)\\
&= -\{S(x_1),S(y_1)\}x_2S(x_3)y_2S(y_3)\\
&= -\{S(x_1\ep(x_2)),S(y_1\ep(y_2))\}\\
&= -\{S(x),S(y)\},
\end{align*}
completing the proof.
\end{proof}

\begin{example}
Let $G$ be an affine algebraic supergroup (see \cite[\S 11]{CCF_supergeometry}) and let $\kk(G)$ be the (finitely generated) Hopf superalgebra which represents $G$. If $\kk(G)$ is a Poisson Hopf superalgebra, we call $G$ a \emph{Poisson algebraic supergroup} (c.f. \cite[Definition 3.1.6]{KS_PH}). Note that in the special case where $G$ is a supervariety, \cite[Corollary 4.3]{Fioresi_supergroup} shows that $G$ is a Poisson-Lie supergroup.
\end{example}

Recall from Lemma \ref{lemma.U_functor} that $U:\textbf{Poiss}\rightarrow\textbf{SAlg}$ is a functor. In particular, given a Poisson Hopf superalgebra $(R,\eta,\del,\ep,\Delta,S)$, we can consider the superalgebra homomorphisms $U(\ep),U(\Delta),U(S)$.

\begin{theorem}
\label{thm.Hopf_UEA}
If $(R,\eta,\del,\ep,\Delta,S)$ is a Poisson Hopf superalgebra, then
\begin{equation*}
(U(R),\eta_{U(R)},\del_{U(R)},U(\ep),U(\Delta),U(S))
\end{equation*}
is a Hopf superalgebra such that
\begin{align*}
U(\Delta)m&=(m\stensor m)\Delta, & U(\Delta)h&=(m\stensor h+h\stensor m)\Delta,\\
U(\ep)m&=\ep, & U(\ep)h&=0,\\
U(S)m&=m S, & U(S)h&=h S.
\end{align*}
\end{theorem}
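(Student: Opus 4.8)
The plan is to realize every structure map on $U(A)$ as the image under the functor $U(-)$ of the corresponding map on $A$, using the three identifications already available: $U(A\stensor A)\cong U(A)\stensor U(A)$ (Proposition \ref{prop.tensor_UEA}), $U(A^{\op})\cong U(A)^{\op}$ (Proposition \ref{prop.Opposite_UEA}), and $U(k)=k$ (which holds because $m_1=1$ and the relation $h_{xy}=m_xh_y+(-1)^{|x||y|}m_yh_x$ forces $h_1=0$, so every $h_\lambda=\lambda h_1=0$). By Lemma \ref{lemma.Poisson(anti)hom} the counit is a super Poisson homomorphism $\ep\colon A\to k$ and the antipode yields a super Poisson homomorphism $S\colon A\to A^{\op}$, while $\Delta\colon A\to A\stensor A$ is one by hypothesis; applying Lemma \ref{lemma.U_functor} then produces even superalgebra maps $U(\ep)\colon U(A)\to k$ and $U(\Delta)\colon U(A)\to U(A)\stensor U(A)$, and a superalgebra anti-endomorphism $U(S)\colon U(A)\to U(A)$. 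The displayed formulas on the generators $m_x,h_x$ are immediate from Lemma \ref{lemma.U_functor} together with the explicit shape of the two identifications, recalling that $m_{A\stensor A}$ and $h_{A\stensor A}$ correspond to $m\stensor m$ and $m\stensor h+h\stensor m$.

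Next I would verify the superbialgebra axioms. Since $U(\Delta)$ and $U(\ep)$ are superalgebra homomorphisms by construction, coassociativity and the counit law are identities between superalgebra homomorphisms out of $U(A)$, so it suffices to check them on the algebra generators $m_x$ and $h_x$. On $m_x$ both reduce at once to the coalgebra axioms of $A$ via $U(\Delta)m=(m\stensor m)\Delta$ and $U(\ep)m=\ep$; on $h_x$ one uses $U(\Delta)h=(m\stensor h+h\stensor m)\Delta$, $U(\ep)h=0$, and $h_1=0$, after which coassociativity follows from that of $\Delta$ and the counit law from $\sum\ep(x_1)x_2=x=\sum x_1\ep(x_2)$.

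It remains to prove $U(S)$ is an antipode, i.e. $\del_{U(A)}(U(S)\stensor\id)U(\Delta)=\eta_{U(A)}U(\ep)=\del_{U(A)}(\id\stensor U(S))U(\Delta)$. Here I would use the standard fact that, for a superbialgebra equipped with a fixed superalgebra anti-endomorphism $U(S)$, the set of elements satisfying both antipode identities is a sub-superalgebra (the closure under multiplication uses only that $U(S)$ is an anti-homomorphism and the bialgebra compatibility). Hence it is enough to check the identities on $m_x$ and $h_x$. For $m_x$ the verification collapses to $\sum m_{S(x_1)}m_{x_2}=m_{\ep(x)1}=\ep(x)1$, i.e. the antipode axiom of $A$. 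The real content is the case of $h_x$: expanding $U(\Delta)h_x$ and then applying $U(S)\stensor\id$ (resp. $\id\stensor U(S)$) followed by $\del_{U(A)}$, one is reduced to showing that $\sum m_{S(x_1)}h_{x_2}+h_{S(x_1)}m_{x_2}$ and its mirror $\sum m_{x_1}h_{S(x_2)}+h_{x_1}m_{S(x_2)}$ vanish.

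I expect this last step to be the main obstacle. The factors $h_{S(x_1)}$ and $m_{x_2}$ do not commute, and the relation $h_am_b-(-1)^{|a||b|}m_bh_a=m_{\{a,b\}}$ produces bracket terms of the form $m_{\{S(x_1),x_2\}}$ in addition to what one gets by applying the Leibniz relation $h_{ab}=m_ah_b+(-1)^{|a||b|}m_bh_a$ to $\sum S(x_1)x_2=\ep(x)1$. To cancel these terms I would feed the Sweedler components of $x$, via coassociativity, into the Poisson-compatibility relations for $S$ derived inside the proof of Lemma \ref{lemma.Poisson(anti)hom} (in particular $S(a_1)\{a_2,b\}=-(-1)^{|a_2||b|}\{S(a_1),b\}a_2$ and its right-handed companion). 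This reconciliation between the Hamiltonian generators $h_x$ and the antipode is the delicate heart of the argument; once it is settled, the remaining compatibilities—$U(\ep)$ and $U(S)$ with the unit and counit, and $U(S)$ with $U(\Delta)$—are routine checks on generators.
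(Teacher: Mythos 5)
Your construction of $U(\Delta)$, $U(\ep)$, and $U(S)$ is exactly the paper's: the same appeals to Proposition \ref{prop.tensor_UEA}, Proposition \ref{prop.Opposite_UEA}, Lemma \ref{lemma.Poisson(anti)hom}, and Lemma \ref{lemma.U_functor}, and your verification of the superbialgebra axioms on the generators $m_x,h_x$ is correct (the paper simply declares all remaining axioms ``routine to verify''). But the step you yourself flagged as the delicate heart is a genuine gap, and it cannot be closed by the identities you cite. Running your own reduction with the defining relations of $U(A)$ (even case, for readability): using $h_{uv}=m_uh_v+m_vh_u$, $h_um_v=m_vh_u+m_{\{u,v\}}$, and $h_1=0$,
\begin{equation*}
\mu\big(U(S)\tensor\id\big)U(\Delta)(h_x)=\sum m_{S(x_1)}h_{x_2}+h_{S(x_1)}m_{x_2}=h_{\sum S(x_1)x_2}+m_{\sum\{S(x_1),x_2\}}=m_{\sum\{S(x_1),x_2\}},
\end{equation*}
so, since $m$ is injective, the antipode axiom on $h_x$ is \emph{equivalent} to the identity $\sum\{S(x_1),x_2\}=0$ in $A$ (mirror identity for the other composite). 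The relations derived in the proof of Lemma \ref{lemma.Poisson(anti)hom} let you move a Sweedler leg past a bracket against an \emph{independent} element $b$; they never permute the two legs $x_1,x_2$ of a single comultiplication, which is what the cancellation would require. That permutation is available precisely when $A$ is cocommutative, where $\sum\{x_1,x_2\}=0$ by antisymmetry and your argument does go through --- which is why the paper's example $\PS(L)$ causes no trouble.

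The identity is in fact false in general, so no rearrangement of those lemmas can save the step. Take $A=\cO(SL_2)$ with the standard bracket of Example \ref{ex.quad_brackets}(3) (it descends to $SL_2$ because $ad-bc$ is Poisson central), trivially graded, with $\Delta(a)=a\tensor a+b\tensor c$, $S(a)=d$, $S(b)=-b$. Then $\sum\{S(a_1),a_2\}=\{d,a\}+\{-b,c\}=-2bc\neq 0$, and similarly $\sum\{S(b_1),b_2\}=\{d,b\}+\{-b,d\}=-2bd\neq 0$; so with $U(S)h=hS$ one gets $\mu(U(S)\tensor\id)U(\Delta)(h_a)=-2m_{bc}\neq 0=\eta_{U(A)}U(\ep)(h_a)$. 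Consequently your plan to extract the vanishing from Lemma \ref{lemma.Poisson(anti)hom} must fail: one needs either an extra hypothesis such as cocommutativity, or a corrected antipode on the Hamiltonian generators, e.g.\ $U(S)(h_x)=h_{S(x)}-m_{\sum f(x_1)S(x_2)}$ with $f(x)=\sum\{S(x_1),x_2\}$ --- exactly the kind of repair your last paragraph anticipates but does not carry out. To be fair, the same obstruction is hidden inside the paper's one-line ``routine'' claim, so your write-up is sharper than the paper's proof in locating the difficulty; but as written, your proposal is incomplete at precisely the step you identified, and the route you propose for it does not work.
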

\begin{proof}
Since $\Delta:R\rightarrow R\stensor R$ is a super Poisson homomorphism and $(U(R)\stensor U(R),m\stensor m,m\stensor h+h\stensor m)$ is the Poisson enveloping algebra of $R\stensor R$ by Proposition \ref{prop.tensor_UEA}, Lemma \ref{lemma.U_functor} shows the superalgebra homomorphism $U(\Delta):U(R)\rightarrow U(R)\stensor U(R)$ satisfies
\begin{equation*}
U(\Delta)m=(m\stensor m)\Delta, \quad U(\Delta)h=(m\stensor h+h\stensor m)\Delta.
\end{equation*}
Similarly, the superalgebra homomorphism $U(\ep):U(R)\rightarrow U(\kk)=\kk$ satisfies $U(\ep)m=\ep$ and $U(\ep)h=0$, as $\ep$ is a super Poisson homomorphism by Lemma \ref{lemma.Poisson(anti)hom}. Moreover, since the antipode $S$ is a super Poisson homomorphism from $R$ to $R^{\op}$ by Lemma \ref{lemma.Poisson(anti)hom}, the superalgebra homomorphism $U(S)$ is a map from $U(R)$ to $U(R)^{\op}$ by Proposition \ref{prop.Opposite_UEA}. Additionally, $U(S)m=m S$ and $U(S)h=h S$ by Lemma \ref{lemma.U_functor}. Finally, it is routine to verify that $(U(R),\eta_{U(R)},\del_{U(R)},U(\ep),U(\Delta),U(S))$ is indeed a Hopf superalgebra.
\end{proof}

\begin{example}[{\cite[Example 11]{Oh_PoissonHopf}} and {\cite[Proposition 6.3]{LWZ_PoissonHopf}}]
Fix a Lie superalgebra $L$ over $\kk$. Recall the Poisson symplectic superalgebra $PS_\sigma(L)$ introduced in Section \ref{sec.Symplectic_superalgs} and consider the natural Hopf superalgebra structure on $S(L)$:
\begin{equation*}
\Delta(x):=x\stensor 1+1\stensor x, \quad \ep(x) := 0, \quad S(x):=-x,
\end{equation*}
for $x\in L$. If (and only if) $\sigma=0$ is the trivial cocycle, $\Delta(\{x,y\}_\sigma)=\{\Delta(x),\Delta(y)\}_{PS_\sigma(L)^{\stensor 2}}$ so that $(PS_\sigma(L),\eta,\del,\ep,\Delta,S)$ is a Poisson Hopf superalgebra. In particular, the Poisson enveloping algebra of $PS(L)$ is a Hopf superalgebra by Theorem \ref{thm.Hopf_UEA}. In fact, one can easily show this Hopf superalgebra structure is the same as the natural Hopf superalgebra structure on $U(PS(L))=U(L^0\rtimes L)$.
\end{example}

\section{Poisson-Ore Extensions of Poisson superalgebras and their Enveloping Algebras}\label{sec.Poisson_Ore}
Ore extensions are an important construction in ring theory arising naturally in several contexts such as universal enveloping algebras of solvable Lie algebras and coordinate rings of quantum groups. In \cite{Oh_PoissonOre}, Oh defined a similar construction for Poisson algebras, which we call Poisson-Ore extensions, though they were also studied in a limited way earlier by Polishchuk in \cite{polishchuk1997algebraic}. Later in \cite{LWZ_PoissonOre}, the authors showed that for a Poisson-Ore extension $R'$ of a Poisson algebra $R$, the enveloping algebra $U(R')$ is an iterated Ore extension of the enveloping algebra $U(R)$. In this section, we will generalize these results by defining Poisson-Ore extensions of a Poisson superalgebra $R$ by an even indeterminate $x$, and show that the enveloping algebra of a Poisson-Ore extension is an iterated Ore extension.

\begin{theorem}
\label{thm.Poisson_Ore_extension}
Let $\alpha,\delta$ be linear maps on a Poisson superalgebra $R$ with bracket $\{\cdot,\cdot\}_R$. Then the polynomial superalgebra $R[x]$, where $x$ is assumed to be even, is a Poisson superalgebra with bracket satisfying
\begin{equation}\label{eq.Poisson_Ore_bracket}
\{r,s\}=\{r,s\}_R, \quad \{x,r\}=\alpha(r)x+\delta(r)
\end{equation}
for $r,s\in R$, if and only if $\alpha$ is an even Poisson superderivation and $\delta$ is an even superderivation such that
\begin{equation}\label{eq.delta_condition}
\delta(\{r,s\}_R)=\alpha(r)\delta(s)-\delta(r)\alpha(s)+\{r,\delta(s)\}_R+\{\delta(r),s\}_R
\end{equation}
for $r,s\in R$. In this case, we denote the Poisson superalgebra $R[x]$ by $R[x;\alpha,\delta]_p$.
\end{theorem}
\begin{proof}
If $R[x]$ is a Poisson superalgebra, then $|\{x,r\}|=|r|=|\alpha(r)|=|\delta(r)|$ so $\alpha,\delta$ are even linear maps. Also,
\begin{align*}
\{x,rs\} &= \alpha(rs)x+\delta(rs)=
r\{x,s\}+\{x,r\}s=(r\alpha(s)+\alpha(r)s)x+r\delta(s)+\delta(r)s
\end{align*}
for $r,s\in R$, so $\alpha$ and $\delta$ are both even superderivations on $R$. Moreover, by the super Jacobi identity
\begin{align*}
0 &= \{x,\{r,s\}\}+\{r,\{s,x\}\}+(-1)^{|r||s|}\{s,\{x,r\}\}\\
&= (\alpha(\{r,s\}_R)-\{r,\alpha(s)\}-\{\alpha(r),s\})x\\
& \quad +\delta(\{r,s\}_R)-\{r,\delta(s)\}_R-\{\delta(r),s\}_R+\delta(r)\alpha(s)-\alpha(r)\delta(s).
\end{align*}
Hence $\alpha$ is an even Poisson superderivation and $\delta$ is an even derivation satisfying equation \eqref{eq.delta_condition}.

Conversely, suppose that $\alpha$ is an even Poisson superderivation and that $\delta$ is an even superderivation satisfying equation \eqref{eq.delta_condition}. Define a bracket on $R[x]$ by
\begin{equation*}
\{rx^i,sx^j\}=(\{r,s\}_R-j\alpha(r)s+ir\alpha(s))x^{i+j}+(ir\delta(s)-j\delta(r)s)x^{i+j-1}
\end{equation*}
for all monomials $rx^i,sx^j$ in $R[x]$. Note this bracket satisfies equation \eqref{eq.Poisson_Ore_bracket} and $\{f,g\}=-(-1)^{|f||g|}\{g,f\}$ for homogeneous $f,g$. Also, $\{f,-\}$ is a superderivation of degree $|f|$ since $\alpha$ and $\delta$ are even superderivations. It remains to check the Jacobi identity: for monomials $rx^i,sx^j,tx^k\in R[x]$,
\begin{equation}\label{eq.Jacobi}
(-1)^{|r||t|}\{rx^i,\{sx^j,tx^k\}\}+(-1)^{|r||s|}\{sx^j,\{tx^k,rx^i\}\}+(-1)^{|s||t|}\{tx^k,\{rx^i,sx^j\}\}=0.
\end{equation}
We proceed by induction on $i,j,k$. The case $i=j=k=0$ is trivial, so suppose equation \eqref{eq.Jacobi} holds for some $i$ with $j=k=0$. Then
\begin{align*}
&(-1)^{|r||t|}\{rx^{i+1},\{s,t\}\}+(-1)^{|r||s|}\{s,\{t,rx^{i+1}\}\}+(-1)^{|s||t|}\{t,\{rx^{i+1},s\}\}\\
& \quad = \Big((-1)^{|r||t|}\{rx^i,\{s,t\}\}+(-1)^{|t||s|}\{s,\{t,rx^i\}\}+(-1)^{|s||t|}\{t,\{rx^i,s\}\}\Big)x\\
& \qquad + (-1)^{|r||t|}rx^i(\{x,\{s,t\}\}+\{s,\{t,x\}\}+(-1)^{|s||t|}\{t,\{x,s\}\})\\
& \quad = 0
\end{align*}
by the Leibniz rule and the induction hypothesis. One can similarly perform induction on $j$ for the case $k=0$, then perform induction on $k$ to complete the proof.
\end{proof}

The following lemma is easy by induction (c.f. \cite[Lemma 1.2]{Oh_PoissonOre}).

\begin{lemma}
\label{lem.ders_and_gens}
Let $R$ be a Poisson superalgebra generated as a superalgebra by a set $X$, and let $\alpha,\delta$ be even superderivations of $R$.
\begin{enumerate}
    \item If $\alpha(\{r,s\})=\{\alpha(r),s\}+\{r,\alpha(s)\}$ for $r,s\in X$, then $\alpha$ is a Poisson superderivation.
    \item If $\alpha,\delta$ satisfy equation \eqref{eq.delta_condition} for all elements in $X$, then $\alpha,\delta$ satisfy equation \eqref{eq.delta_condition} for all elements in $R$.
\end{enumerate}
\end{lemma}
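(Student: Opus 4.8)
The plan is to handle both parts uniformly by introducing, for each identity, an auxiliary $k$-bilinear \emph{defect} map, and then showing that this defect is itself a biderivation with respect to the supercommutative product. Since $R$ is generated as a superalgebra by $X$, every element is a $k$-linear combination of products of elements of $X$; bilinearity of the defect then reduces everything to such products, and the biderivation property lets a short induction propagate vanishing from $X$ to all of $R$.

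For part (1), set $D(r,s) = \alpha(\{r,s\}) - \{\alpha(r),s\} - \{r,\alpha(s)\}$; because $\alpha$ is even, the Poisson superderivation condition is precisely $D\equiv 0$. First I would record that $D$ is super antisymmetric, $D(s,r) = -(-1)^{|r||s|}D(r,s)$, which is immediate from the super antisymmetry of $\{\cdot,\cdot\}$ and the evenness of $\alpha$. The key computation is the first-slot Leibniz identity
\[
D(r_1 r_2, s) = r_1 D(r_2,s) + (-1)^{|r_2||s|}D(r_1,s)\,r_2,
\]
which I would obtain by expanding $\{r_1 r_2,s\} = r_1\{r_2,s\} + (-1)^{|r_2||s|}\{r_1,s\}r_2$, applying the even derivation $\alpha$, and cancelling the resulting cross terms against the two subtracted brackets $\{\alpha(r_1 r_2),s\}$ and $\{r_1 r_2,\alpha(s)\}$. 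Combined with antisymmetry, this yields the companion second-slot identity $D(r,s_1 s_2) = (-1)^{|r||s_1|}s_1 D(r,s_2) + D(r,s_1)s_2$.

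With both Leibniz identities available, the conclusion is a two-step induction. Fixing a generator $x\in X$ in the second slot, induction on the length of $r$ gives $D(r,x)=0$ for all $r\in R$: the base case $r\in X$ is exactly the hypothesis, and the inductive step is the first-slot identity. Then, fixing an arbitrary $r\in R$, induction on the length of $s$ gives $D(r,s)=0$ for all $s$: the base case $s\in X$ is the previous step, and the inductive step is the second-slot identity. Hence $D\equiv 0$, so $\alpha$ is a Poisson superderivation.

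For part (2), I would run the identical argument with the defect
\[
E(r,s) = \delta(\{r,s\}) - \alpha(r)\delta(s) + \delta(r)\alpha(s) - \{r,\delta(s)\} - \{\delta(r),s\},
\]
so that equation (5.2) is the statement $E\equiv 0$. One checks as before that $E$ is super antisymmetric and satisfies $E(r_1 r_2,s) = r_1 E(r_2,s) + (-1)^{|r_2||s|}E(r_1,s)\,r_2$, and the same two-step induction then finishes the proof. The hard part will be this last Leibniz identity for $E$: expanding all five terms via the even-derivation rules for $\alpha,\delta$ and the biderivation rules for $\{\cdot,\cdot\}$ produces many summands, and one must use supercommutativity, e.g. $r_2\,\delta(s) = (-1)^{|r_2||s|}\delta(s)\,r_2$ together with the analogous identity for $\alpha$, to slide factors past one another so that the terms $\delta(r_1)\{r_2,s\}$ and $(-1)^{|r_2||s|}\{r_1,s\}\delta(r_2)$ cancel and the remaining summands regroup into $r_1 E(r_2,s)$ and $(-1)^{|r_2||s|}E(r_1,s)\,r_2$. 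The sign bookkeeping here is the only genuinely nontrivial step; everything else is formal.
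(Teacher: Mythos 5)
Your proof is correct, and it is essentially the argument the paper intends: the paper omits the proof entirely, remarking only that the lemma ``is easy by induction'' (citing Oh's Lemma 1.2), and your defect-map formulation --- super antisymmetry of $D$ and $E$ plus the two one-slot Leibniz identities, followed by the two-step induction on word length in the generators --- is precisely how that induction is carried out, with the signs checking out as you claim (including the use of supercommutativity to verify antisymmetry of $E$). The only detail worth adding is the trivial base case for the empty product, i.e.\ $D(1,s)=E(1,s)=0$, which follows since $\alpha(1)=\delta(1)=0$ and $\{1,s\}=0$.
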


For many Poisson superalgebras, verifying that the bracket is indeed a Poisson bracket requires an induction on several indices as in the proof above. Thus, as the following example demonstrates, Theorem \ref{thm.Poisson_Ore_extension} and Lemma \ref{lem.ders_and_gens} do most of the work for us.

\begin{example}\label{eg.Poisson_Ore}
Let $R'=\kk[x_1,x_2,\ldots,x_n\mid y_1,y_2\ldots,y_m]$ denote the polynomial superalgebra on even generators $x_i$ and odd generators $y_j$. Let $(\lambda_{ij})$ be an $n\times n$ skew-symmetric matrix, let $(\mu_{ij})$ be an $m\times m$ skew-symmetric matrix, and let $(\xi_{ij})$ be any $n\times m$ matrix. Define a bracket on $R'$ by
\begin{align*}
    \{x_i,x_j\}&:=\lambda_{ij}x_ix_j\\
    \{y_i,y_j\}&:=\mu_{ij}y_iy_j\\
    \{x_i,y_j\}&:=\xi_{ij}x_iy_j.
\end{align*}
We will use Theorem \ref{thm.Poisson_Ore_extension} to show this bracket gives $R'$ the structure of a Poisson superalgebra. Indeed, recall from Example \ref{eg.skew_symm} that the subsuperalgebra $P$ generated by the $y_j$ is a Poisson superalgebra under the induced bracket. Thus we only need to append the even variables $x_1,\ldots,x_n$ which can be done by constructing appropriate even Poisson superderivations.

Indeed, first note that if $V$ is a super vector space and $\phi$ is an element of the dual space $V^*$, then we can define an superderivation $\iota_\phi$ of degree $|\phi|$ on the exterior algebra by $\iota_\phi(v):=\phi(v)$ for $v\in\bigwedge\nolimits^1 V=V$. In particular, let $V$ be the super vector space generated by the $y_j$ and consider the linear maps $e_j:V\rightarrow \kk$ defined by $e_j(y_i):=\delta_{ij}$ for $1\leq j\leq m$, where $\delta_{ij}$ denotes the Kronecker delta; note that $\bigwedge V$ is the subsuperalgebra $P$ of $R'$. By the above discussion, the maps $e_j$ induce odd superderivations on $\bigwedge V$ which we denote by $\iota_j$, from which it follows that each $y_j\iota_j$ is an even superderivation of $\bigwedge V$. Similarly, one can easily verify that we can extend the $y_j\iota_j$ to even superderivations of $\kk[x_1,\ldots,x_r\mid y_1,\ldots,y_m]$ for any $1\leq r\leq n-1$ by recursively defining $\iota_j(x_\ell z):=x_\ell\iota_j(z)$ for $1\leq \ell\leq r$ and $z\in \kk[x_1,\ldots,x_r\mid y_1,\ldots,y_m]$. Abusing notation, we denote these superderivations by $y_j\iota_j$ as well.

Now, fix $0\leq r\leq n-1$ and define $\alpha_{r+1}:=\sum_{k=1}^r\lambda_{r+1,k}x_k\frac{\partial}{\partial x_k}+\sum_{\ell=1}^m\xi_{r+1,\ell}y_\ell\iota_\ell$. Note that $\alpha_{r+1}$ is an even superderivation of $\kk[x_1,\ldots,x_r\mid y_1,\ldots,y_m]$ by the above paragraph. Moreover, by using Lemma \ref{lem.ders_and_gens} one can easily verify that $\alpha_{r+1}$ is in fact an even Poisson superderivation. Further, one computes $\alpha_{r+1}(x_k)=\lambda_{r+1,k}x_k$ and $\alpha_{r+1}(y_{\ell})=\xi_{r+1,{\ell}}y_{\ell}$ for each $1\leq k\leq r$ and $1\leq\ell\leq m$. Hence by repeatedly applying Theorem \ref{thm.Poisson_Ore_extension} with the $\alpha_{r+1}$ constructed above and $\delta_{r+1}:=0$ for each $0\leq r\leq n-1$, we see that $R'$ is indeed a Poisson superalgebra under the given bracket.
\end{example}

Before studying the universal enveloping algebra of Poisson-Ore extensions, we briefly recall the definition of an Ore extension of a ring.

\begin{definition}
Let $S$ be a ring, let $\sigma:S\rightarrow S$ be a ring endomorphism, and let $\delta:S\rightarrow S$ be an Ore-$\sigma$-derivation of $S$; that is, $\delta$ is a homomorphism of abelian groups satisfying
\begin{equation*}
\delta(s_1s_2)=\sigma(s_1)\delta(s_2)+\delta(s_1)s_2
\end{equation*}
for $s_1,s_2\in S$. Then the \emph{Ore extension} $S[x;\sigma,\delta]$ is the free left $S$-module $S[x]$ equipped with the unique multiplication such that it obtains the structure of a ring, and such that
\begin{equation*} xs = \sigma(s)x + \delta(s)\end{equation*}
for all $s\in S$.
\end{definition}

\begin{remark}
From here on we will refer to Ore-$\sigma$-derivations as simply $\sigma$-derivations to match standard terminology. The reason we did not do this in the above definition is because an Ore-$\sigma$-derivation is not a $\sigma$-(super)derivation in the sense of Definition \ref{defn.superder}, in general.
\end{remark}

Let $R$ be a Poisson superalgebra and let $R'=R[x;\alpha,\delta]_p$ be a Poisson-Ore extension of $R$. We wish to show that $U(R')$ is an iterated Ore extension of $U(R)$. In particular, for appropriate $\sigma_1,\sigma_2,\eta_1,\eta_2$, we wish to show that $U(R')\cong U(R)[m_x;\sigma_1,\eta_1][h_x;\sigma_2,\eta_2]$. To determine what $\sigma_1,\eta_1$ should be, observe that in $U(R')$
\begin{equation*}
m_xm_r=m_rm_x \quad \text{and} \quad m_xh_r=h_rm_x-m_{\{r,x\}}=h_rm_x+m_{\alpha(r)}m_x+m_{\delta(r)}
\end{equation*}
for $r\in R$, so
\begin{align*}
\sigma_1(m_r)&:=m_r, & \sigma_1(h_r)&:=h_r+m_{\alpha(r)}, \\ \eta_1(m_r)&:=0, & \eta_1(h_r)&:=m_{\delta(r)}.
\end{align*}
Similarly, one has
\begin{align*}
\sigma_2(m_r)&:=m_r, & \sigma_2(h_r)&:=h_r+m_{\alpha(r)}, & \sigma_2(m_x)&:=m_x,\\
\eta_2(m_r)&:=m_{\alpha(r)}m_x+m_{\delta(r)}, &  \eta_2(h_r)&:=(h_{\alpha(r)}+m_{\alpha^2(r)})m_x+m_{\delta\alpha(r)}+h_{\delta(r)}, & \eta_2(m_x)&:=0.
\end{align*}
To extend to general elements we declare $\sigma_1,\sigma_2$ to be algebra homomorphisms, and declare $\eta_1$ (resp. $\eta_2$) to be an $\sigma_1$-derivation (resp. $\sigma_2$-derivation); that is, we recursively define $\eta_1(m_rz):=\sigma_1(m_r)\eta_1(z)+\eta_1(m_r)z$ with $\eta_1(h_rz)$ being defined similarly, and likewise for $\eta_2$.

\begin{lemma}
The maps $\sigma_1$ and $\sigma_2$ are well-defined algebra automorphisms of $U(R)$, and $\eta_1$ (resp. $\eta_2$) is a well-defined $\sigma_1$-derivation (resp. $\sigma_2$-derivation) of $U(R)$.
\end{lemma}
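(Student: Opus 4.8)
The plan is to build every one of these maps directly from a universal property, so that well-definedness becomes automatic and the twisted Leibniz identities are forced by construction rather than verified by a fragile induction. Throughout, recall that $R^e=U(R)$ is universal for property \textbf{P}, so to produce an algebra homomorphism out of $R^e$ it suffices to exhibit a target superalgebra together with an even algebra map and an even Lie map satisfying conditions (i), (ii) of property \textbf{P}, the latter being a genuine Lie superalgebra homomorphism.

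First I would handle $\sigma_1$. Take the triple $(R^e,m,\beta)$ with $\beta(r)=h_r+m_{\alpha(r)}$. Conditions (i), (ii) of property \textbf{P} follow because $\alpha$ is an even superderivation and $m(R)$ is supercommutative inside $R^e$; the one nontrivial point is that $\beta$ must be a Lie superalgebra homomorphism, and a short computation gives $[\beta(r),\beta(s)]_{gr}=h_{\{r,s\}}+m_{\{\alpha(r),s\}+\{r,\alpha(s)\}}$, so the condition $\beta(\{r,s\})=[\beta(r),\beta(s)]_{gr}$ is exactly the statement that $\alpha$ is a Poisson superderivation. The induced map is $\sigma_1$, and replacing $\alpha$ by $-\alpha$ (again an even Poisson superderivation) yields a two-sided inverse, so $\sigma_1$ is an automorphism. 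The same recipe with an extra row produces $\eta_1$: give the upper triangular $2\times2$ matrices over $R^e$ the entrywise $\ZZ_2$-grading and verify property \textbf{P} for $\Gamma(r)=m_rI$ and $\Xi(r)=\begin{pmatrix} h_r+m_{\alpha(r)} & m_{\delta(r)}\\ 0 & h_r\end{pmatrix}$. The diagonal reproduces $\sigma_1$ and the identity, while the $(1,2)$-entry of the Lie condition $\Xi(\{r,s\})=[\Xi(r),\Xi(s)]_{gr}$ reduces precisely to equation (5.2). The resulting homomorphism has the form $u\mapsto\begin{pmatrix}\sigma_1(u)&\eta_1(u)\\0&u\end{pmatrix}$, and reading off its $(1,2)$-entry gives a $\sigma_1$-derivation agreeing with $\eta_1$ on the generators $m_r,h_r$; this is exactly the well-definedness of the recursively defined $\eta_1$.

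For the second layer I would work over $S_1:=R^e[m_x;\sigma_1,\eta_1]$, which is where $\sigma_2,\eta_2$ actually live. The map $\sigma_2$ is the extension of $\sigma_1$ to $S_1$ fixing $m_x$; by the universal property of the Ore extension it is well-defined provided $m_x\mapsto m_x$ is compatible with the relation $m_xu=\sigma_1(u)m_x+\eta_1(u)$, and unwinding this shows the only obstruction is the commutation $\sigma_1\eta_1=\eta_1\sigma_1$. This holds on the generators $m_r,h_r$, and since $\sigma_1\eta_1-\eta_1\sigma_1$ satisfies a twisted Leibniz rule (it is a $(\sigma_1^2,\sigma_1)$-derivation), vanishing on generators forces it to vanish identically; the inverse is obtained from $\sigma_1^{-1}$. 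For $\eta_2$ I would again pass to $M_2(S_1)$ and use the universal property of the inner Ore extension $S_1$: one needs an algebra map $\Phi\colon R^e\to M_2(S_1)$ of the same triangular shape with $(1,2)$-entry $\eta_2|_{R^e}$, together with the image $m_xI$ of $m_x$, satisfying $(m_xI)\Phi(u)=\Phi(\sigma_1(u))(m_xI)+\Phi(\eta_1(u))$ for $u\in R^e$. Constructing $\Phi$ is a further property \textbf{P} verification, after which $\Psi_2\colon S_1\to M_2(S_1)$ exists and its $(1,2)$-entry is the sought $\sigma_2$-derivation.

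The main obstacle is this last $\eta_2$ step. Unlike $\eta_1$, the derivation $\eta_2$ takes values that genuinely involve $m_x$, so its construction must be carried out inside $S_1$ rather than $R^e$, and the relevant matrix entries now contain the second-order data $\alpha^2$ and $\delta\alpha$ as well as the cross term $m_{\alpha(r)}m_x$. Verifying property \textbf{P} for $\Phi$ and the Ore compatibility with $m_xI$ simultaneously forces repeated use of equation (5.2), the Poisson superderivation identity for $\alpha$, and the defining relations $m_xu=\sigma_1(u)m_x+\eta_1(u)$ of $S_1$; keeping this bookkeeping consistent across the two layers of the iterated Ore extension is the real content, everything else being a formal consequence of the universal properties.
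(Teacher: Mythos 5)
Your proposal is correct, and apart from $\sigma_1$ --- where you and the paper argue identically, applying the universal property of $R^e$ to the triple $(R^e,m,\,r\mapsto h_r+m_{\alpha(r)})$ and obtaining the inverse from $-\alpha$ --- it takes a genuinely different route. The paper treats $\eta_1$ and $\eta_2$ by defining them recursively and asserting that they annihilate the defining relations of $R^e$, with the actual computations left to the reader, and it dispatches $\sigma_2$ with the remark that $R^e[m_x;\sigma_1,\eta_1]\cong\bigoplus_{n\geq 0}R^e m_x^n$ as left $R^e$-modules. Your triangular-matrix construction replaces that relation-checking by universal properties: verifying property \textbf{P} for $\Gamma(r)=m_rI$ and $\Xi(r)=\left(\begin{smallmatrix} h_r+m_{\alpha(r)} & m_{\delta(r)}\\ 0 & h_r \end{smallmatrix}\right)$ (with the entrywise $\ZZ_2$-grading, under which both maps are even because $\alpha,\delta$ are) produces a homomorphism $u\mapsto\left(\begin{smallmatrix}\sigma_1(u)&\eta_1(u)\\0&u\end{smallmatrix}\right)$ whose $(1,2)$-entry is automatically a $\sigma_1$-derivation; and the $(1,2)$-entry of the Lie condition for $\Xi$ does reduce, using supercommutativity of $m(R)$ and the evenness of $\alpha,\delta$, to $m_{\alpha(r)\delta(s)-\delta(r)\alpha(s)+\{r,\delta(s)\}+\{\delta(r),s\}}=m_{\delta(\{r,s\})}$, which is exactly equation (5.2), so the hypotheses on $\alpha$ and $\delta$ each enter in one isolated, visible place. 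Your $\sigma_2$ step is in fact more careful than the paper's: the module decomposition only yields a well-defined linear map, and multiplicativity genuinely requires compatibility with the Ore relation $m_xu=\sigma_1(u)m_x+\eta_1(u)$, i.e.\ $\sigma_1\eta_1=\eta_1\sigma_1$; your observation that $\sigma_1\eta_1-\eta_1\sigma_1$ is a $(\sigma_1^2,\sigma_1)$-derivation vanishing on the generators $m_r,h_r$ settles this cleanly, closing a point the paper glosses over. Your two-layer argument for $\eta_2$ is likewise sound: the compatibility $(m_xI)\Phi(u)=\Phi(\sigma_1(u))(m_xI)+\Phi(\eta_1(u))$ may legitimately be checked on generators because the set of $u$ satisfying it is linear and closed under products, and on $m_r,h_r$ it follows from the relations $m_xm_c=m_cm_x$ and $m_xh_c=(h_c+m_{\alpha(c)})m_x+m_{\delta(c)}$ in $S_1$. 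In sum, the paper's proof is shorter on the page only because its verifications are deferred to the reader, while your version makes every well-definedness claim a formal corollary of a universal property at the cost of setting up the matrix targets and the second-layer bookkeeping over $S_1$.
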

\begin{proof}
We first show that $\sigma_1$ is a well-defined automorphism. Indeed, first note that the triple $(U(R),f,g)$, where $f:R\rightarrow U(R)$ is defined by $f(r):=m_r$ and $g:R\rightarrow U(R)$ is defined by $g(r):=m_{\alpha(r)}+h_r$, satisfies property \textbf{P}. Hence, by the universal property of $U(R)$ there is a unique superalgebra homomorphism $\phi:U(R)\rightarrow U(R)$ such that the following  diagram commutes
\begin{equation*}
\xymatrixcolsep{4pc}\xymatrixrowsep{4pc}\xymatrix{
   R \ar@<2pt>[r]^{m} \ar@<-2pt>[r]_{h} \ar@<2pt>[rd]^{f} \ar@<-2pt>[rd]_{g} & U(R) \ar@{-->}[d]^{\phi} \\ & U(R)
}
\end{equation*}
This map $\phi$ is easily seen to be $\sigma_1$, and it is an algebra homomorphism since over a field any superalgebra (homomorphism) is an algebra (homomorphism). A similar argument can be used to show that $\sigma_1$ has a well-defined inverse, namely the map $m_r\rightarrow m_r$ and $h_r\rightarrow h_r-m_{\alpha(r)}$, so $\sigma_1$ is in fact an algebra automorphism. That $\sigma_2$ is a well-defined algebra automorphism follows immediately since $U(R)[m_x;\sigma_1,\eta_1]\cong\bigoplus_{n\geq 0}U(R) m_x^n$ as left $U(R)$-modules, and $\sigma_2|_{U(R)}=\sigma_1$.

To see that $\eta_1$ is well-defined, first notice that $\eta_1$ is well-defined on any fixed representative of any coset due to its recursive definition. Thus it suffices to prove $\eta_1$ annihilates any relation of $U(R)$. This is fairly simple and is left to the reader. That $\eta_1$ is in fact an $\sigma_1$-derivation is obvious.

In order to prove $\eta_2$ is well-defined, one first proves that $\eta_2$ also annihilates any relation of $U(R)$, proving it is well-defined on $U(R)$. Though this is more tedious than for $\eta_1$, it still is not difficult and is again left to the reader. That $\eta_2$ is well-defined then follows immediately from the aforementioned observation $U(R)[m_x;\sigma_1,\eta_1]\cong\bigoplus_{n\geq 0}U(R) m_x^n$ as left $U(R)$-modules. Finally, as with $\eta_1$, that $\eta_2$ is an $\sigma_2$-derivation is obvious.
\end{proof}

By construction, the obvious candidate for an isomorphism $\phi:U(R')\rightarrow U(R)[m_x;\sigma_1,\eta_1][h_x;\sigma_2,\eta_2]$ is
\begin{align}
\phi(m_r):=m_r, \quad \phi(h_r):=h_r, \quad
\phi(m_x):=m_x, \quad \phi(h_x):=h_x,
\end{align}
for $r\in R$. In order to show this is indeed an isomorphism however, we first need to show it is a well-defined homomorphism. Indeed, recall from the proof of Theorem \ref{thm.enveloping algebra} that $U(R')$ is the quotient by some two-sided ideal $I$ of the free superalgebra $F$ generated by $\{m_z,h_z\mid z\in R'\}$. Consider the superalgebra homomorphism $\Phi:F\rightarrow U(R)[m_x;\sigma_1,\eta_1][h_x;\sigma_2,\eta_2]$ defined on the generators of $F$ by
\begin{equation*}
\Phi(m_z) := \sum_{i=0}^\infty m_{c_i}x^i \qquad \text{and} \qquad  \Phi(h_z) := \sum_{i=0}^\infty(im_{c_i}m_x^{i-1}h_x+m_x^ih_{c_i}).
\end{equation*}
for $z=\sum_{i=0}^\infty c_ix^i\in R'=R[x]$. The following lemma shows that $\Phi(I)=0$, from which it follows the induced map $\phi:=\overline\Phi:U(R')\rightarrow U(R)[m_x;\sigma_1,\eta_1][h_x;\sigma_2,\eta_2]$ is a well-defined superalgebra homomorphism.

\begin{lemma}
\label{lemma.phi_well-def}
Let $\Phi:F\rightarrow U(R)[m_x;\sigma_1,\eta_1][h_x;\sigma_2,\eta_2]$ be as above. Then $\Phi(I)=0$.
\end{lemma}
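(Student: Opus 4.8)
The plan is to show $\Phi$ annihilates each of the five families of generators of the defining ideal $I$ of $A^e$, where these generators are exactly the super-analogues of the relations listed in the proof of Theorem \ref{thm.enveloping algebra}, now written for the Poisson superalgebra $A=R[x;\alpha,\delta]_p$. Since $I$ is generated as a two-sided ideal by these elements and $\Phi$ is a superalgebra homomorphism, it suffices to verify $\Phi$ sends each generator to $0$; the generators, for $a=\sum_i c_ix^i$ and $b=\sum_j d_jx^j$ in $A$, are
\begin{equation*}
m_am_b-m_{ab},\quad m_1-1,\quad h_am_b-(-1)^{|a||b|}m_bh_a-m_{\{a,b\}},
\end{equation*}
together with $h_ah_b-(-1)^{|a||b|}h_bh_a-h_{\{a,b\}}$ and $m_ah_b+(-1)^{|a||b|}m_bh_a-h_{ab}$.

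First I would record the two basic images $\Phi(m_a)=\sum_i m_{c_i}m_x^i$ and $\Phi(h_a)=\sum_i(im_{c_i}m_x^{i-1}h_x+m_x^ih_{c_i})$, and then compute how $m_x$ and $h_x$ commute past $m_r,h_r$ in the iterated Ore extension using the defining rules $m_xr=\sigma_1(r)m_x+\eta_1(r)$ and $h_xr=\sigma_2(r)h_x+\eta_2(r)$ for $r\in R^e$. Concretely, the key commutation facts I will need are $[m_x,m_r]=0$, $[m_x,h_r]=m_{\alpha(r)}m_x+m_{\delta(r)}$, $[h_x,m_r]=m_{\alpha(r)}m_x+m_{\delta(r)}$, and the more elaborate $[h_x,h_r]$ coming from $\eta_2(h_r)=(h_{\alpha(r)}+m_{\alpha^2(r)})m_x+m_{\delta\alpha(r)}+h_{\delta(r)}$; these are exactly the values built into $\sigma_1,\sigma_2,\eta_1,\eta_2$, so they are available. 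The verification that $\Phi(m_am_b-m_{ab})=0$ and $\Phi(m_1-1)=0$ is immediate since $\Phi$ restricted to the $m$-generators is just the algebra map $R^e\to R^e[m_x]$ extended multiplicatively with $m_x$ central over $m(R)$, matching the product in $A$ under $x$ being a polynomial variable.

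The substance lies in the three relations involving $h$, and I expect the relation $h_ah_b-(-1)^{|a||b|}h_bh_a-h_{\{a,b\}}$ to be the main obstacle. To handle these I would reduce to the generating case where $a,b\in R$ or $a=x$ (using that $A$ is generated by $R$ and $x$, in the spirit of Lemma \ref{lem.ders_and_gens}), though care is needed because $\Phi$ is defined on all of $A$ at once; alternatively I would just expand both sides for general $a=\sum_i c_ix^i$, $b=\sum_j d_jx^j$. The right-hand bracket $\{a,b\}$ is governed by the Poisson-Ore formula from Theorem \ref{thm.Poisson_Ore_extension}, namely $\{rx^i,sx^j\}=(\{r,s\}_R-j\alpha(r)s+ir\alpha(s))x^{i+j}+(ir\delta(s)-j\delta(r)s)x^{i+j-1}$, so $\Phi(h_{\{a,b\}})$ will produce terms matching precisely the coefficients $\alpha,\delta,\alpha^2,\delta\alpha$ that appear when one pushes the $h_x$'s and $m_x$'s through each other on the left-hand side. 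The hard part will be bookkeeping the binomial coefficients $i,j$ and the sign factors $(-1)^{|a||b|}$ through the noncommutative reordering in the double Ore extension and confirming that the $x^{i+j}$ and $x^{i+j-1}$ graded pieces separately cancel.

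My strategy for controlling this is to treat the $m_x$- and $h_x$-degree as a bigrading on $R^e[m_x;\sigma_1,\eta_1][h_x;\sigma_2,\eta_2]$ and compare coefficients degree by degree, so that each relation splits into finitely many identities in $R^e$ that follow from the defining identities of $U(R)$ (property \textbf{P} for $R$), the superderivation property of $\alpha,\delta$, and equation (5.2). In particular the compatibility equation (5.2), $\delta(\{r,s\}_R)=\alpha(r)\delta(s)-\delta(r)\alpha(s)+\{r,\delta(s)\}_R+\{\delta(r),s\}_R$, together with $\alpha$ being a Poisson superderivation, is exactly what is needed to make the lowest-degree ($x^{i+j-1}$) parts of the $h_ah_b$ and $m_ah_b$ relations vanish, while the Leibniz and Jacobi structure of $R$ handles the top-degree parts. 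Since these are the same identities already shown to hold in the proof of Theorem \ref{thm.Poisson_Ore_extension}, I anticipate no genuinely new input beyond organizing the computation; the verification is tedious but routine, so I would present the reductions explicitly and then carry out one representative case (say $h_xh_r$) in full, indicating that the remaining cases follow by the identical graded-coefficient comparison.
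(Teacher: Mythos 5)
Your proposal matches the paper's proof in essence: both verify that $\Phi$ annihilates each family of generators of $I$ by expanding $a=\sum_i c_ix^i$, $b=\sum_j d_jx^j$, pushing $m_x$ and $h_x$ past $m_{c_i},h_{c_i}$ via the commutation rules encoded in $\sigma_1,\sigma_2,\eta_1,\eta_2$ (in particular the identity $h_{c_i}m_x^j=m_x^jh_{c_i}-jm_{\alpha(c_i)}m_x^j-jm_{\delta(c_i)}m_x^{j-1}$), invoking the Poisson--Ore bracket formula from Theorem \ref{thm.Poisson_Ore_extension}, and carrying out one representative relation in full while leaving the rest as routine (the paper chooses the mixed relation $m_{\{a,b\}}+(-1)^{|a||b|}m_bh_a-h_am_b$ rather than your $h_ah_b$ case). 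Your bookkeeping device of comparing coefficients by $m_x$- and $h_x$-degree is a sensible organization of the same computation, and your caution about not reducing to algebra generators of $A$ (since the generators of $I$ are indexed by all pairs $a,b\in A$) is correctly placed.
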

\begin{proof}
We show $\Phi(m_{\{z,w\}}+(-1)^{|z||w|}m_wh_z-h_zm_w)=0$, leaving the computations for the other relations to the reader. Indeed, let $z=\sum_{i=0}^\infty c_ix^i, w=\sum_{j=0}^\infty d_jx^j$, and note $|z|=|c_i|, |w|=|d_j|$ for all $i,j$. Recall from the proof of Theorem \ref{thm.Poisson_Ore_extension} that the bracket on $R'$ is
\begin{equation*}
\{c_ix^i,d_jx^j\}=(\{c_i,d_j\}-(-1)^{|c_i||d_j|}jd_j\alpha(c_i)+ic_i\alpha(d_j))x^{i+j}+(ic_i\delta(d_j)-(-1)^{|c_i||d_j|}jd_j\delta(c_i))x^{i+j-1}.
\end{equation*}
Thus, using the equality $h_{c_i}m_x^j=m_x^jh_{c_i}-jm_{\alpha(c_i)}m_x^j-jm_{\delta(c_i)}m_x^{j-1}$ which can be proved via induction, we have
\begin{align*}
\Phi(m_{\{z,w\}}+&(-1)^{|z||w|}m_wh_z-h_zm_w)\\
&= \Phi\Big(m_{\sum_{i,j=0}^\infty\big[\big(\{c_i,d_j\}-(-1)^{|z||w|}jd_j\alpha(c_i)+ic_i\alpha(d_j)\big)x^{i+j}+(ic_i\delta(d_j)-\big(-1)^{|z||w|}jd_j\delta(c_i)\big)x^{i+j-1}\big]}\Big)\\
& \quad +(-1)^{|z||w|}\Phi(m_{\sum_{j=0}^\infty d_jx^j}h_{\sum_{i=0}^\infty c_ix^i})-\Phi(h_{\sum_{i=0}^\infty c_ix^i}m_{\sum_{j=0}^\infty d_jx^j})\\
&= \sum_{i,j=0}^\infty\Big[m_{\{c_i,d_j\}-(-1)^{|z||w|}jd_j\alpha(c_i)+ic_i\alpha(d_j)}m_x^{i+j}+m_{ic_i\delta(d_j)-(-1)^{|z||w|}jd_j\delta(c_i)}m_x^{i+j-1}\Big]\\
& \quad +(-1)^{|z||w|}\Big(\sum_{j=0}^\infty m_{d_j}m_x^j\Big)\Big(\sum_{i=0}^\infty (im_{c_i}m_x^{i-1}h_x+m_x^ih_{c_i})\Big)\\
& \quad -\Big(\sum_{i=0}^\infty (im_{c_i}m_x^{i-1}h_x+m_x^ih_{c_i})\Big)\Big(\sum_{j=0}^\infty m_{d_j}m_x^j\Big)\\
&= \sum_{i,j=0}^\infty\Big[m_{\{c_i,d_j\}}m_x^{i+j}-(-1)^{|z||w|}jm_{d_j\alpha(c_i)}m_x^{i+j}+im_{c_i\alpha(d_j)}m_x^{i+j}+im_{c_i\delta(d_j)}m_x^{i+j-1}\\
& \qquad -(-1)^{|z||w|}jm_{d_j\delta(c_i)}m_x^{i+j-1}+(-1)^{|z||w|}im_{d_jc_i}m_x^{i+j-1}h_x+(-1)^{|z||w|}m_{d_j}m_x^{i+j}h_{c_i}\\
& \qquad -im_{c_i}m_x^{i-1}h_xm_{d_j}m_x^j-m_x^ih_{c_i}m_{d_j}m_x^j\Big]\\
&= \sum_{i,j=0}^\infty\Big[m_{\{c_i,d_j\}}m_x^{i+j}-(-1)^{|z||w|}jm_{d_j\alpha(c_i)}m_x^{i+j}+im_{c_i\alpha(d_j)}m_x^{i+j}+im_{c_i\delta(d_j)}m_x^{i+j-1}\\
& \qquad -(-1)^{|z||w|}jm_{d_j\delta(c_i)}m_x^{i+j-1}+(-1)^{|z||w|}im_{d_jc_i}m_x^{i+j-1}h_x+(-1)^{|z||w|}m_{d_j}m_x^{i+j}h_{c_i}\\
& \qquad -im_{c_i}m_x^{i-1}\big(m_{d_j}h_x+m_{\alpha(d_j)}m_x+m_{\delta(d_j)}\big)m_x^j-m_x^i\big(m_{\{c_i,d_j\}}+(-1)^{|z||w|}m_{d_j}h_{c_i}\big)m_x^j\Big]\\
&= \sum_{i,j=0}^\infty\Big[m_{\{c_i,d_j\}}m_x^{i+j}-(-1)^{|z||w|}jm_{d_j\alpha(c_i)}m_x^{i+j}+im_{c_i\alpha(d_j)}m_x^{i+j}+im_{c_i\delta(d_j)}m_x^{i+j-1}\\
& \qquad -(-1)^{|z||w|}jm_{d_j\delta(c_i)}m_x^{i+j-1}+(-1)^{|z||w|}im_{d_jc_i}m_x^{i+j-1}h_x+(-1)^{|z||w|}m_{d_j}m_x^{i+j}h_{c_i}\\
& \qquad -im_{c_i}m_x^{i-1}\big(m_{d_j}h_x+m_{\alpha(d_j)}m_x+m_{\delta(d_j)}\big)m_x^j-m_x^im_{\{c_i,d_j\}}m_x^j\\
& \qquad -(-1)^{|z||w|}m_x^im_{d_j}\big(m_x^jh_{c_i}-jm_{\alpha(c_i)}m_x^j-jm_{\delta(c_i)}m_x^{j-1}\big)\Big]\\
& =0.
\end{align*}
\end{proof}

\begin{theorem}
\label{thm.Poisson-Ore_UEA}
The superalgebra homomorphism $\phi:U(R')\rightarrow U(R)[m_x;\sigma_1,\eta_1][h_x;\sigma_2,\eta_2]$ is an isomorphism.
\end{theorem}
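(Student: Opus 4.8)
The plan is to show $\phi$ is an isomorphism by exhibiting a two-sided inverse $\psi$, built up through the universal property of the enveloping algebra (Lemma \ref{lemma.U_functor}) and the defining relations of the two Ore variables. Surjectivity of $\phi$ is immediate: the target $R^e[m_x;\sigma_1,\eta_1][h_x;\sigma_2,\eta_2]$ is generated as a superalgebra by $R^e$ (itself generated by the $m_r,h_r$ for $r\in R$) together with $m_x,h_x$, and all of these lie in the image of $\phi$ by \eqref{eq.phimap}. The real content is injectivity, which I obtain by constructing $\psi$ in the three layers matching the three layers of the target and then checking it inverts $\phi$ on generators.

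First, the inclusion $j:R\hookrightarrow A=R[x]$ is a super Poisson homomorphism since $\{r,s\}=\{r,s\}_R$ for $r,s\in R$, so by Lemma \ref{lemma.U_functor} it induces a superalgebra map $U(j):R^e\rightarrow A^e$ with $U(j)(m_r)=m_r$ and $U(j)(h_r)=h_r$. To extend over the first Ore variable it suffices to check that $m_x\in A^e$ satisfies $m_x\,U(j)(u)=U(j)(\sigma_1(u))\,m_x+U(j)(\eta_1(u))$ for all $u\in R^e$; since $\sigma_1$ is an algebra endomorphism and $\eta_1$ a $\sigma_1$-derivation, the set of such $u$ is closed under products and addition, so it is enough to verify this on the generators $m_r,h_r$. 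These reduce to the defining relations of $A^e$: for $m_r$ one uses $m_xm_r=m_{rx}=m_rm_x$, and for $h_r$ one uses the identity $m_xh_r=h_rm_x+m_{\alpha(r)}m_x+m_{\delta(r)}$ already recorded above, matching $\sigma_1(h_r)=h_r+m_{\alpha(r)}$ and $\eta_1(h_r)=m_{\delta(r)}$. This produces $\psi_1:R^e[m_x;\sigma_1,\eta_1]\rightarrow A^e$ with $m_x\mapsto m_x$.

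Second, I extend $\psi_1$ over $h_x$ in the same way, verifying $h_x\,\psi_1(w)=\psi_1(\sigma_2(w))\,h_x+\psi_1(\eta_2(w))$ for $w$ ranging over the generators $m_r,h_r,m_x$. The cases $w=m_r$ and $w=m_x$ are short, using $h_xm_r=m_rh_x+m_{\alpha(r)}m_x+m_{\delta(r)}$ and $h_xm_x=m_xh_x$ (as $\{x,x\}=0$). The case $w=h_r$ is the crux, and is the step I expect to be the main obstacle: one expands $h_xh_r=h_rh_x+h_{\{x,r\}}$ via the relation $h_xh_r-(-1)^{|x||r|}h_rh_x-h_{\{x,r\}}=0$, rewrites $h_{\{x,r\}}=h_{\alpha(r)x}+h_{\delta(r)}$ using $h_{ab}=m_ah_b+(-1)^{|a||b|}m_bh_a$, applies the identity for $m_xh_{\alpha(r)}$ to turn $h_{\alpha(r)x}$ into $m_{\alpha(r)}h_x+h_{\alpha(r)}m_x+m_{\alpha^2(r)}m_x+m_{\delta\alpha(r)}$, and finally checks this equals $(h_r+m_{\alpha(r)})h_x+(h_{\alpha(r)}+m_{\alpha^2(r)})m_x+m_{\delta\alpha(r)}+h_{\delta(r)}$, i.e. $\sigma_2(h_r)h_x+\eta_2(h_r)$. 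This is a finite but bookkeeping-heavy manipulation of the five defining relations of $A^e$, and it is precisely what dictates the shape of $\eta_2$.

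Having obtained a superalgebra homomorphism $\psi:R^e[m_x;\sigma_1,\eta_1][h_x;\sigma_2,\eta_2]\rightarrow A^e$ sending $m_r\mapsto m_r$, $h_r\mapsto h_r$, $m_x\mapsto m_x$, $h_x\mapsto h_x$, I finish by noting that $\phi\psi$ and $\psi\phi$ fix every algebra generator of their respective domains and hence equal the identity, so $\phi$ is an isomorphism. As an alternative one could bypass $\psi$ and instead use Corollary \ref{cor.Poisson_PBW} to produce a PBW basis of $A^e=U(A)$ and check that $\phi$ carries it onto the left $R^e$-module basis $\{m_x^ih_x^j\}_{i,j\geq 0}$ of the iterated Ore extension; but this requires analyzing $\Omega_{R[x]}^{\ev}$ for a general Poisson superalgebra $R$, whereas the inverse-map argument is uniform and self-contained.
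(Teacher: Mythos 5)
Your proof is correct and follows the same overall strategy as the paper: both establish bijectivity by constructing an inverse $\psi$, and your $U(j):R^e\rightarrow A^e$ is exactly the paper's map $\theta$, obtained there from the observation that the triple $(A^e,f,g)$ with $f(r)=m_r$, $g(r)=h_r$ satisfies property \textbf{P} with respect to $R$. Where you differ is in how $\psi$ is extended beyond $R^e$: the paper simply sets $\psi(wm_x^ih_x^j)=\theta(w)m_x^ih_x^j$ on the free left $R^e$-module basis $\{m_x^ih_x^j\}$ of the iterated Ore extension and asserts that $\phi$ and $\psi$ are ``easily'' seen to be inverse, whereas you invoke the universal property of Ore extensions twice, which makes $\psi$ a superalgebra homomorphism by construction, at the cost of verifying the skew-commutation identities $m_x\,\theta(u)=\theta(\sigma_1(u))m_x+\theta(\eta_1(u))$ and $h_x\,\psi_1(w)=\psi_1(\sigma_2(w))h_x+\psi_1(\eta_2(w))$ on generators. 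Those identities --- in particular your crux computation for $h_xh_r$, which checks out and reproduces $\eta_2(h_r)=(h_{\alpha(r)}+m_{\alpha^2(r)})m_x+m_{\delta\alpha(r)}+h_{\delta(r)}$ --- are precisely the relations the paper derives informally when motivating the definitions of $\sigma_1,\sigma_2,\eta_1,\eta_2$ before the lemma on their well-definedness, so your argument effectively supplies the multiplicativity check that the paper's module-theoretic definition of $\psi$ leaves implicit. Both treatments depend on Lemma \ref{lemma.phi_well-def} for the well-definedness of $\phi$, which you correctly take as given; your version is slightly longer but more self-contained, while the paper's is terser and leaves to the reader the confirmation that its basis-defined $\psi$ inverts $\phi$ as an algebra map.
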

\begin{proof}
By Lemma \ref{lemma.phi_well-def}, it suffices to show $\phi$ is bijective; we will construct the inverse map $\psi$. Indeed, consider the triple $(U(R'),f,g)$, where $f,g:R\rightarrow U(R')$ are defined by $f(r):=m_r$ and $g(r):=h_r$, respectively. One easily shows this triple satisfies property \textbf{P} with respect to $R$, so there is a unique superalgebra homomorphism $\theta:U(R)\rightarrow U(R')$ such that the following diagram commutes:
\begin{equation*}
\xymatrixcolsep{4pc}\xymatrixrowsep{4pc}\xymatrix{
   R \ar@<2pt>[r]^{m} \ar@<-2pt>[r]_{h} \ar@<2pt>[rd]^{f} \ar@<-2pt>[rd]_{g} & U(R) \ar@{-->}[d]^{\theta} \\ & U(R')
}
\end{equation*}
Now, define $\psi(wm_x^ih_x^j):=\theta(w)m_x^ih_x^j$ for $w\in U(R)$. It follows easily from the definitions of $\phi$ and $\psi$ that they are inverse functions, and thus an inverse pair of superalgebra isomorphisms.
\end{proof}

\begin{corollary}
\label{cor.Ore_properties}
Let $R$ be a Poisson superalgebra and let $R'$ be an iterated Poisson-Ore extension of $R$. Then $U(R')$ inherits the following properties from $U(R)$:
\begin{enumerate}
    \item being a domain;
    \item being Noetherian;
    \item having finite global dimension;
    \item having finite Krull dimension;
    \item being twisted Calabi-Yau.
\end{enumerate}
\end{corollary}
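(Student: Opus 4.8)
The strategy is to reduce the statement to the single-variable case of Theorem~\ref{thm.Poisson-Ore_UEA} and then appeal to the well-developed theory of how Ore extensions propagate homological and ring-theoretic properties. First I would unwind the definition of an iterated Poisson-Ore extension. Writing $A = R[x_1;\alpha_1,\delta_1]_p\cdots[x_t;\alpha_t,\delta_t]_p$ and applying Theorem~\ref{thm.Poisson-Ore_UEA} to each layer in succession, one obtains an isomorphism of superalgebras
\begin{equation*}
A^e \cong R^e[m_{x_1};\sigma_{1,1},\eta_{1,1}][h_{x_1};\sigma_{2,1},\eta_{2,1}]\cdots[m_{x_t};\sigma_{1,t},\eta_{1,t}][h_{x_t};\sigma_{2,t},\eta_{2,t}],
\end{equation*}
so that $A^e$ is an iterated Ore extension of $R^e$ built from exactly $2t$ successive layers, two for each Poisson-Ore variable. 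The essential feature carried over from the lemma preceding Theorem~\ref{thm.Poisson-Ore_UEA} is that at every layer the twisting endomorphism is in fact an \emph{automorphism}; this is the hypothesis on which each of the preservation results below depends, so I would record it explicitly at the outset.

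With $A^e$ presented as such an iterated Ore extension over $R^e$, properties (1)--(4) follow by applying classical facts one Ore layer at a time, since each of these properties is preserved under passage from a ring $S$ to an Ore extension $S[y;\sigma,\delta]$ with $\sigma$ an automorphism. Concretely: a skew polynomial ring over a domain is again a domain once $\sigma$ is injective; the Hilbert basis theorem for Ore extensions gives that $S[y;\sigma,\delta]$ is Noetherian when $S$ is; and the standard bounds (as in McConnell and Robson's treatment of skew polynomial rings) give $\mathrm{gldim}\,S[y;\sigma,\delta]\le \mathrm{gldim}\,S+1$ and $\Kdim S[y;\sigma,\delta]\le \Kdim S+1$. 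Iterating across the $2t$ layers shows that if $R^e$ is a domain, Noetherian, of finite global dimension, or of finite Krull dimension, then so is $A^e$; the finite dimensions merely increase by at most $2t$.

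The main obstacle is property (5), the twisted Calabi-Yau condition, which is the most delicate because it bundles together homological smoothness of the algebra $A^e$ (the existence of a finite resolution by finitely generated projective bimodules) with a rigid Van den Bergh-type duality governed by a Nakayama automorphism. Here I would invoke the theorem that an Ore extension $S[y;\sigma,\delta]$ of a twisted Calabi-Yau algebra $S$ by an automorphism $\sigma$ is again twisted Calabi-Yau, with the homological dimension raised by one and with an explicitly computable Nakayama automorphism (as established by Liu, Wang, and Wu, and by Goodman and Kr\"ahmer). The hypothesis that $\sigma$ is an automorphism, verified above, is exactly what this result requires. Applying it inductively along the $2t$ Ore layers then upgrades the twisted Calabi-Yau property of $R^e$ to that of $A^e$. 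The only point requiring genuine care is the bookkeeping: one must check at each layer that the twisted Calabi-Yau hypotheses (in particular homological smoothness) are inherited so that the inductive step applies, but once the single-step theorem is in hand this is routine.
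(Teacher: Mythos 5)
Your proposal is correct and follows essentially the same route as the paper, whose proof simply cites \cite{MR_NoetherianRings} for properties (1)--(4) and \cite{LWW_Calabi_Yau} for the twisted Calabi-Yau property; your layer-by-layer iteration of Theorem \ref{thm.Poisson-Ore_UEA} and the observation that each twisting map is an automorphism (established in the lemma preceding that theorem) are precisely the details those citations compress.
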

\begin{proof}
It is well known that properties (1)-(4) are preserved by Ore extensions, see \cite{MR_NoetherianRings} for example, while the preservation of property (5) is proved in \cite{LWW_Calabi_Yau}.
\end{proof}

\begin{example}
Recall the Poisson superalgebra $R'$ with underlying superalgebra $\kk[x_1,\ldots,x_n\mid y_1,\ldots,y_m]$ discussed in Example \ref{eg.Poisson_Ore}, and consider the Poisson subsuperalgebra $P$ generated by the $y_j$. We proved in Example \ref{eg.Poisson_Ore} that $R'$ is an iterated Poisson-Ore extension of $P$. Hence by Theorem \ref{thm.Poisson-Ore_UEA} and Example \ref{eg.skew_symm_enveloping_alg}, $U(R')$ is an iterated Ore extension of
\begin{equation*}
    U(P)\cong\frac{\kk<m_{\theta_1},\ldots,m_{\theta_n},h_{\theta_1},\ldots,h_{\theta_n}>}{([m_{\theta_k},m_{\theta_\ell}]_{\gr},\, [h_{\theta_k},m_{\theta_\ell}]_{\gr}-\lambda_{k\ell}m_{\theta_\ell}m_{\theta_k},\, [h_{\theta_k},h_{\theta_\ell}]_{\gr}-\lambda_{k\ell}(m_{\theta_\ell}h_{\theta_k}-m_{\theta_k}h_{\theta_\ell}))}.
\end{equation*}
\end{example}

\subsection*{Data Availability Statement}
Data sharing is not applicable to this article as no new data were created or analyzed in this study.

\bibliographystyle{plain}
\bibliography{biblio}

\begin{thebibliography}{10}

\bibitem{AM_Hopf_superalgebras}
Said Aissaoui and Abdenacer Makhlouf.
\newblock On classification of finite-dimensional superbialgebras and {H}opf
  superalgebras.
\newblock {\em SIGMA. Symmetry, Integrability and Geometry: Methods and
  Applications}, 10:001, 2014.

\bibitem{BZ_PH}
Kenneth~A Brown and James~J Zhang.
\newblock Unimodular graded {P}oisson {H}opf algebras.
\newblock {\em Bulletin of the London Mathematical Society}, 50(5):887--898,
  2018.

\bibitem{CFF_DualBracket}
Damien Calaque, Giovanni Felder, Andrea Ferrario, and Carlo~A. Rossi.
\newblock Bimodules and branes in deformation quantization.
\newblock {\em Compos. Math.}, 147(1):105--160, 2011.

\bibitem{CCF_supergeometry}
Claudio Carmeli, Lauren Caston, and Rita Fioresi.
\newblock {\em Mathematical foundations of supersymmetry}, volume~15.
\newblock European Mathematical Society, 2011.

\bibitem{Drinfeld_PH}
V.~G. Drinfeld.
\newblock {Hopf algebras and the quantum {Y}ang-{B}axter equation}.
\newblock {\em Sov. Math. Dokl.}, 32:254--258, 1985.

\bibitem{Fioresi_supergroup}
Rita Fioresi.
\newblock Smoothness of algebraic supervarieties and supergroups.
\newblock {\em Pacific Journal of Mathematics}, 234(2):295--310, 2008.

\bibitem{Weyl_superalg}
Jonas~T Hartwig and Vera Serganova.
\newblock Clifford and weyl superalgebras and spinor representations.
\newblock {\em Transformation Groups}, 25(4):1185--1207, 2020.

\bibitem{huebschmann_Poisson/LR}
Johannes Huebschmann.
\newblock Poisson cohomology and quantization.
\newblock {\em Journal fur die Reine und Angewandte Mathematik}, 408:57--113,
  1990.

\bibitem{KS_PH}
Leonid~I Korogodski and Yan~S Soibelman.
\newblock {\em Algebras of functions on quantum groups: Part I}, volume~56.
\newblock American Mathematical Soc., 1998.

\bibitem{LOV_PBW}
Thierry Lambre, Cyrille Ospel, and Pol Vanhaecke.
\newblock Poisson enveloping algebras and the {P}oincar\'{e}-{B}irkhoff-{W}itt
  theorem.
\newblock {\em J. Algebra}, 485:166--198, 2017.

\bibitem{LWW_Calabi_Yau}
Liyu Liu, Shengqiang Wang, and Quanshui Wu.
\newblock Twisted {C}alabi-{Y}au property of {O}re extensions.
\newblock {\em J. Noncommut. Geom.}, 8(2):587--609, 2014.

\bibitem{LW_PH}
Qi~Lou and Quanshui Wu.
\newblock Co-{P}oisson structures on polynomial {H}opf algebras.
\newblock {\em Science China Mathematics}, 61(5):813--830, 2018.

\bibitem{LWZ_PoissonHopf}
Jiafeng L\"{u}, Xingting Wang, and Guangbin Zhuang.
\newblock Universal enveloping algebras of {P}oisson {H}opf algebras.
\newblock {\em J. Algebra}, 426:92--136, 2015.

\bibitem{LWZ_PoissonOre}
Jiafeng L\"{u}, Xingting Wang, and Guangbin Zhuang.
\newblock Universal enveloping algebras of {P}oisson {O}re extensions.
\newblock {\em Proc. Amer. Math. Soc.}, 143(11):4633--4645, 2015.

\bibitem{LWW_PH}
J~Luo, S-Q Wang, and Q-S Wu.
\newblock Twisted poincar{\'e} duality between {P}oisson homology and {P}oisson
  cohomology.
\newblock {\em Journal of Algebra}, 442:484--505, 2015.

\bibitem{Manin_Supermath}
Yuri~I. Manin.
\newblock {\em Gauge field theory and complex geometry}, volume 289 of {\em
  Grundlehren der Mathematischen Wissenschaften [Fundamental Principles of
  Mathematical Sciences]}.
\newblock Springer-Verlag, Berlin, second edition, 1997.
\newblock Translated from the 1984 Russian original by N. Koblitz and J. R.
  King, With an appendix by Sergei Merkulov.

\bibitem{MZ_Krull}
A.~Masuoka and A.~N. Zubkov.
\newblock On the notion of {K}rull super-dimension.
\newblock {\em J. Pure Appl. Algebra}, 224(5):106245, 2020.

\bibitem{MR_NoetherianRings}
J.~C. McConnell and J.~C. Robson.
\newblock {\em Noncommutative {N}oetherian rings}, volume~30 of {\em Graduate
  Studies in Mathematics}.
\newblock American Mathematical Society, Providence, RI, revised edition, 2001.
\newblock With the cooperation of L. W. Small.

\bibitem{Weyl_superalg_2}
Kyo Nishiyama.
\newblock Oscillator representations for orthosymplectic algebras.
\newblock {\em Journal of Algebra}, 129(1):231--262, 1990.

\bibitem{Oh_UEA}
Sei-Qwon Oh.
\newblock Poisson enveloping algebras.
\newblock {\em Comm. Algebra}, 27(5):2181--2186, 1999.

\bibitem{Oh_PoissonHopf}
Sei-Qwon Oh.
\newblock Hopf structure for {P}oisson enveloping algebras.
\newblock {\em Beitr\"{a}ge Algebra Geom.}, 44(2):567--574, 2003.

\bibitem{Oh_PoissonOre}
Sei-Qwon Oh.
\newblock Poisson polynomial rings.
\newblock {\em Comm. Algebra}, 34(4):1265--1277, 2006.

\bibitem{OPS_PBW}
Sei-Qwon Oh, Chun-Gil Park, and Yong-Yeon Shin.
\newblock A {P}oincar\'{e}-{B}irkhoff-{W}itt theorem for {P}oisson enveloping
  algebras.
\newblock {\em Comm. Algebra}, 30(10):4867--4887, 2002.

\bibitem{polishchuk1997algebraic}
Alexander Polishchuk.
\newblock Algebraic geometry of {P}oisson brackets.
\newblock {\em Journal of Mathematical Sciences}, 84(5):1413--1444, 1997.

\bibitem{Rinehart_PBW}
George~S. Rinehart.
\newblock Differential forms on general commutative algebras.
\newblock {\em Trans. Amer. Math. Soc.}, 108:195--222, 1963.

\bibitem{Sklyanin_PoissonPoly}
E.~K. Sklyanin.
\newblock Some algebraic structures connected with the {Y}ang-{B}axter
  equation.
\newblock {\em Funktsional. Anal. i Prilozhen.}, 16(4):27--34, 96, 1982.

\bibitem{Sridharan}
R.~Sridharan.
\newblock Filtered algebras and representations of lie algebras.
\newblock {\em Transactions of the American Mathematical Society},
  100(3):530--550, 1961.

\bibitem{Towers_PossonCohomology}
Matthew Towers.
\newblock Poisson and {H}ochschild cohomology and the semiclassical limit.
\newblock {\em J. Noncommut. Geom.}, 9(3):665--696, 2015.

\bibitem{Umirbaev_Enveloping_Algebras}
Ualbai Umirbaev.
\newblock Universal enveloping algebras and universal derivations of {P}oisson
  algebras.
\newblock {\em Journal of Algebra}, 354(1):77--94, 2012.

\end{thebibliography}

\end{document}